\newtheorem{lem}{Lemma}
\numberwithin{equation}{section}
\numberwithin{figure}{section}
\numberwithin{table}{section}
\theoremstyle{remark}
\theoremstyle{plain}
\newtheorem{thm}{\protect\theoremname}[section]
\theoremstyle{plain}
\newtheorem{prop}{\protect\propositionname}[section]
\date{}
\newcommand{\R}{\mathbb{R}}
\newcommand{\p}{\ensuremath{\partial}}
\providecommand{\notationname}{Notation}
\providecommand{\propositionname}{Proposition}
\providecommand{\theoremname}{Theorem}
\begin{document}
\title{Existence and uniqueness of classical solution to an initial-boundary
value problem for the unsteady general planar Broadwell model with
four velocities}
\author{Koudzo Togbévi Selom SOBAH\textsuperscript{\textsuperscript{1{*}}}\textsuperscript{}
and Amah Séna D'ALMEIDA\textsuperscript{\textsuperscript{2}}}

\maketitle
\textsuperscript{1,2}Department of Mathematics, Faculty of Sciences
and Laboratory of Mathematics and Applications, University of Lomé,
Lomé, TOGO

{*} corresponding email: deselium@gmail.com
\begin{abstract}
We consider the unsteady problem for the general planar Broadwell
model with fourh velocities in a rectangular spatial domain over a
finite time interval. We impose a class of non-negative initial and
Dirichlet boundary data that are bounded and continuous, along with
their first-order partial derivatives. We then prove the existence
and uniqueness of a non-negative continuous solution, bounded together
with its first-order partial derivatives, to the initial-boundary
value problem.
\end{abstract}
\textbf{Key words and phrases:} discrete velocity(Boltzmann) models,
discrete kinetic equations, initial-boundary value problems, existence,
uniqueness, fixed point theorems.

\textbf{2020 Mathematics Subject Classification:} 76A02, 76M28

\section{Introduction }

The discrete Boltzmann equation (DBE) \cite{key-4} has long served
as a fundamental tool for approximating the behavior of dilute gases,
offering a simplified alternative to the full Boltzmann equation.
Pionner works started with Carleman (1957) \cite{key-9}, Broadwell
(1964) \cite{3,4} and was generalized by Gatignol (1970) \cite{1};
the model replaces the continuous velocity space by a finite set of
discrete velocities, leading to hyperbolic systems of conservation
laws with nonlinear collision terms. These systems have since been
instrumental in the study of various physical and mathematical phenomena,
such as shock formation, entropy dissipation, and boundary layer behavior.

Historically, the initial-boundary value problem (IBVP) for discrete
velocity models has been intensively studied in one-dimensional settings
\cite{5,6,7,11,16,2,9,Cabanne-kawashima}. In the multi-dimensional
steady case, the boundary value problem for discrete velocity models
is investigated, for example, in \cite{10,Nicou=00003D0000E9,d almeida,defoou}.

In contrast, the multi-dimensional theory of the IBVP for DBE models
remains significantly less developed. To our knowledge, no result
had been established for the existence and uniqueness of classical
solutions to these multi-dimensional IBVPs in a bounded domain with
Dirichlet boundary conditions - even in simple geometries such as
rectangles - prior to the recent work \cite{key-1}. In \cite{key-1},
we addressed this gap by providing the proof of existence and uniqueness
of classical solutions for the two-dimensional Broadwell model in
a rectangular domain with Dirichlet boundary conditions.

The goal of the present paper is to generalize these results to the
general four-velocity Broadwell model in the plane, one of the canonical
multi-dimensional discrete models. This work represents a key step
toward a broader mathematical understanding of discrete kinetic models
in higher dimensions.

The paper is organized as follows. In Section \ref{koooapekkke},
the initial-boundary value problem and our main result are presented.
In Section \ref{kooollooaooaiei}, we begin the proof of our result
by the positivity of the solution and in Section \ref{ooolsppikksi},
we end the proof by the existence and uniqueness.

\section{The initial-boundary value problem \label{koooapekkke}}

\subsection{The discrete Boltzmann equations}

Let $d=1,2,3$ and $p$ be a non-negative integer, $\overrightarrow{u_{i}}=\left(u_{i}^{\alpha}\right)_{\alpha=1}^{d}\in\R^{d},$
$\left(i=1,\cdots,p\right),$$A_{ij}^{kl}\geq0$ $\left(i,j,k,l=1,\cdots,p\right)$
.The discrete $p$ velocity model describes a system of particles
with velocities $\overrightarrow{u_{i}}.$ Let $N_{i}\left(t,\left(x_{\alpha}\right)_{\alpha=1}^{d}\right)$
denote the number density of the particles with velocity $\vec{u}_{i},\ i=1,\cdots,p$
at time $t$ and position $M\left(x_{\alpha}\right)_{\alpha=1}^{d}$(
with $\left(x_{\alpha}\right)_{\alpha=1}^{d}\in\R^{d}$). The corresponding
discrete Boltzmann (or kinetic) equations are given by:

\begin{align}
\forall i=1,\cdots,p,\nonumber \\
\dfrac{\partial N_{i}}{\partial t}+\sum_{\alpha=1}^{d}u_{i}^{\alpha}\dfrac{\partial N_{i}}{\partial x_{\alpha}} & =\underbrace{\dfrac{1}{2}\sum_{j,k,l\neq i}A_{ij}^{kl}\left(N_{k}N_{l}\text{\textminus}N_{i}N_{j}\right).}_{\equiv Q_{i}(N)}\label{eq:erttrererer}
\end{align}

In the case of the general four-velocity Broadwell model in the plane,
we have $N_{i}\equiv N_{i}(t,x,y)$ and the kinetic equations are
given by:
\begin{equation}
\begin{cases}
{\textstyle \dfrac{\p N_{1}}{\p t}+c\cos\theta\dfrac{\p N_{1}}{\p x}+c\sin\theta\dfrac{\p N_{1}}{\p y}}=Q\\
\\
\dfrac{\p N_{2}}{\p t}-c\sin\theta\dfrac{\p N_{2}}{\p x}+c\cos\theta\dfrac{\p N_{2}}{\p y}=-Q\\
\\
\dfrac{\p N_{3}}{\p t}+c\sin\theta\dfrac{\p N_{3}}{\p x}-c\cos\theta\dfrac{\p N_{3}}{\p y}=-Q\\
\\
\dfrac{\p N_{4}}{\p t}-c\cos\theta\dfrac{\p N_{4}}{\p x}-c\sin\theta\dfrac{\p N_{4}}{\p y}=Q
\end{cases}
\end{equation}

\[
Q=2cS\left(N_{2}N_{3}-N_{1}N_{4}\right)
\]
$c,S>0$ constants and $0\leq\theta<\dfrac{\pi}{2}.$

\subsection{Initial-boundary value problem }

Let $\left[a_{1},b_{1}\right]\times\left[a_{2},b_{2}\right]$ be a
rectangular spatial domain and $\left[0;T\right]$ a finite time intervall.
We define the space-time domain $\mathscr{P}=\left[0;T\right]\times\left[a_{1};b_{1}\right]\times\left[a_{2};b_{2}\right]$.
In \cite{key-1} we considered the case $\theta=0.$ In the sequel,
$\theta\neq0.$

The initial-boundary value problem we consider is the system $\Sigma$(\ref{eq:erter})-(\ref{eq:mqppa})
:
\begin{align}
{\textstyle \dfrac{\p N_{1}}{\p t}+c\cos\theta\dfrac{\p N_{1}}{\p x}} & {\displaystyle +c\sin\theta\dfrac{\p N_{1}}{\p y}}=Q(N),\;\left(t,x,y\right)\in\mathring{\mathscr{P}}\label{eq:erter}\\
\dfrac{\p N_{2}}{\p t}-c\sin\theta\dfrac{\p N_{2}}{\p x} & +c\cos\theta\dfrac{\p N_{2}}{\p y}=-Q(N),\;\left(t,x,y\right)\in\mathring{\mathscr{P}}\\
\dfrac{\p N_{3}}{\p t}+c\sin\theta\dfrac{\p N_{3}}{\p x} & -c\cos\theta\dfrac{\p N_{3}}{\p y}=-Q(N),\;\left(t,x,y\right)\in\mathring{\mathscr{P}}\\
\dfrac{\p N_{4}}{\p t}-c\cos\theta\dfrac{\p N_{4}}{\p x} & -c\sin\theta\dfrac{\p N_{4}}{\p y}=Q(N),\;\left(t,x,y\right)\in\mathring{\mathscr{P}}\label{eq:zzzfra}\\
N_{i}\left(0,x,y\right)= & N_{i}^{0}\left(x,y\right),\;\left(x,y\right)\in\left[a_{1};b_{1}\right]\times\left[a_{2};b_{2}\right],i=1,\cdots,4\label{eq:lsos}\\
N_{1}\left(t,a_{1},y\right)= & N_{1}^{-}\left(t,y\right),\;\left(t,y\right)\in\left[0;T\right]\times\left[a_{2};b_{2}\right]\label{eq:losso}\\
N_{1}\left(t,x,a_{2}\right)= & N_{1}^{--}\left(t,x\right),\;\left(t,x\right)\in\left[0;T\right]\times\left[a_{1};b_{1}\right]\label{eq:mppos}\\
N_{2}\left(t,b_{1},y\right)= & N_{2}^{+}\left(t,y\right),\;\left(t,y\right)\in\left[0;T\right]\times\left[a_{2};b_{2}\right]\label{eq:mpsss}\\
N_{2}\left(t,x,a_{2}\right)= & N_{2}^{--}\left(t,x\right),\;\left(t,x\right)\in\left[0;T\right]\times\left[a_{1};b_{1}\right]\label{eq:lsoo}\\
N_{3}\left(t,a_{1},y\right)= & N_{3}^{-}\left(t,y\right),\;\left(t,y\right)\in\left[0;T\right]\times\left[a_{2};b_{2}\right]\label{ikis}\\
N_{3}\left(t,x,b_{2}\right)= & N_{3}^{++}\left(t,x\right),\;\left(t,x\right)\in\left[0;T\right]\times\left[a_{1};b_{1}\right]\label{eq:ikioi}\\
N_{4}\left(t,b_{1},y\right)= & N_{4}^{+}\left(t,y\right),\;\left(t,y\right)\in\left[0;T\right]\times\left[a_{2};b_{2}\right]\label{eq:ikioi-1}\\
N_{4}\left(t,x,b_{2}\right)= & N_{4}^{++}\left(t,x\right),\;\left(t,x\right)\in\left[0;T\right]\times\left[a_{1};b_{1}\right]\label{eq:oiiap}
\end{align}
\begin{align}
N_{1}^{0}\left(a_{1},y\right) & =N_{1}^{-}\left(0,y\right),\;y\in\left[a_{2};b_{2}\right]\label{eq:kqiiq}\\
N_{1}^{0}\left(x,a_{2}\right) & =N_{1}^{--}\left(0,x\right),\;x\in\left[a_{1};b_{1}\right]\label{eq:ksos}\\
N_{1}^{-}\left(t,a_{2}\right) & =N_{1}^{--}\left(t,a_{1}\right),\;t\in\left[0;T\right]\label{eq:slos}\\
N_{2}^{0}\left(b_{1},y\right) & =N_{2}^{+}\left(0,y\right),\;y\in\left[a_{2};b_{2}\right]\label{eq:qmpp}\\
N_{2}^{0}\left(x,a_{2}\right) & =N_{2}^{--}\left(0,x\right),\;x\in\left[a_{1};b_{1}\right]\label{eq:lqoop}\\
N_{2}^{+}\left(t,a_{2}\right) & =N_{2}^{--}\left(t,b_{1}\right),\;t\in\left[0;T\right]\label{eq:mqpp}\\
N_{3}^{0}\left(a_{1},y\right) & =N_{3}^{-}\left(0,y\right),\;y\in\left[a_{2};b_{2}\right]\label{eq:lopq}\\
N_{3}^{0}\left(x,b_{2}\right) & =N_{3}^{++}\left(0,x\right),\;x\in\left[a_{1};b_{1}\right]\label{eq:loppq-1}\\
N_{3}^{-}\left(t,b_{2}\right) & =N_{3}^{++}\left(t,a_{1}\right),\;t\in\left[0;T\right]\label{eq:looqp}\\
N_{4}^{0}\left(b_{1},y\right) & =N_{4}^{+}\left(0,y\right),\;y\in\left[a_{2};b_{2}\right]\label{eq:looqp-1}\\
N_{4}^{0}\left(x,b_{2}\right) & =N_{4}^{++}\left(0,x\right),\;x\in\left[a_{1};b_{1}\right]\label{eq:mqpp-1}\\
N_{4}^{+}\left(t,b_{2}\right) & =N_{4}^{++}\left(t,b_{1}\right),\;t\in\left[0;T\right]\label{eq:mqppa}
\end{align}

The initial conditions are given by $N_{i}^{0}$, for $i=1,\ldots,4$,
while the boundary conditions are specified by $N_{1}^{-},N_{1}^{--},N_{2}^{+},N_{2}^{--},N_{3}^{-},N_{3}^{++},N_{4}^{+}$
and $N_{4}^{++}$. All these functions are assumed to be non-negative
and continuous, and their first-order partial derivatives to be continuous
and bounded. (\ref{eq:kqiiq})-(\ref{eq:mqppa}) are compatibility
conditions on the data.

Our problem is to prove the existence and uniqueness of a positive
continuous solution to the system $\Sigma.$ 

We shall use the uniform norm for bounded real-valued functions, and
$\left\Vert f\right\Vert ={\displaystyle \max_{1\leq i\leq4}}\left\Vert f_{i}\right\Vert _{\infty}$
for $f=\left(f_{i}\right)_{i=1}^{4}$ with bounded components; for
bounded real functions of two variables $g\equiv g\left(x,y\right)$
with bounded partial derivatives, we adopt the $C^{1}$ norm $\left\Vert g\right\Vert _{1}\equiv\max\left\{ \left\Vert g\right\Vert _{\infty},\left\Vert \dfrac{\partial g}{\partial x}\right\Vert _{\infty},\left\Vert \dfrac{\partial g}{\partial y}\right\Vert _{\infty}\right\} .$ 

We prove in the sequel the following result 
\begin{thm}
\label{thm:Suppose-.-Then}There exist two positive parameters $p$
and $q$, with $p$ depending on the dimensions of domain $\mathscr{P}$,
and $q$ on the data such that if $pq\leq\dfrac{1}{4}$ then the system
$\Sigma$ admits a unique non-negative continuous solution which is
differentiable except possibly on finitely many planes, with bounded
derivatives, and for which explicit bounds on the solution and its
derivatives are available.
\end{thm}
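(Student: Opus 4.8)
The plan is to convert the system $\Sigma$ into an equivalent fixed-point problem along characteristics and then run a contraction-mapping argument in a suitable closed bounded subset of the space of continuous vector fields on $\mathscr{P}$. Since each equation in (\ref{eq:erter})--(\ref{eq:zzzfra}) is a transport equation with constant coefficients, I would integrate $N_i$ along its characteristic line through a given point $(t,x,y)\in\mathscr{P}$, tracing back until the line meets the parabolic-type boundary of $\mathscr{P}$, i.e. either the initial slice $t=0$ or one of the two incoming spatial faces prescribed for that component (for $N_1$ the faces $x=a_1$ and $y=a_2$, for $N_2$ the faces $x=b_1$ and $y=a_2$, etc.). This yields, for each $i$, an integral representation $N_i(t,x,y)=\Phi_i^0 + \int_{\sigma_i}^{t}(\pm Q(N))(\,\cdot\,)\,ds$ where $\Phi_i^0$ is the value carried from the boundary/initial datum and $\sigma_i$ is the entry time; denote the resulting map $N\mapsto\mathcal{T}(N)$. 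A fixed point of $\mathcal{T}$ that is continuous is then a (mild, hence by the regularity of the data, piecewise classical) solution, the non-smoothness being confined to the finitely many planes that are the backward characteristic hyperplanes emanating from the edges where two faces meet.

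The first real step, carried out in Section \ref{kooollooaooaiei}, is positivity: I would show $\mathcal{T}$ maps non-negative fields to non-negative fields. The subtlety is the sign of $Q=2cS(N_2N_3-N_1N_4)$, which appears with a plus sign in the $N_1,N_4$ equations and a minus sign in the $N_2,N_3$ equations. The standard device is to write each transport equation in the damped form $\frac{d}{ds}N_i + \lambda_i(N)\,N_i = (\text{non-negative source})$ along characteristics: for $N_1$, use $\partial_t N_1 + \cdots + 2cS N_4\, N_1 = 2cS N_2 N_3 \ge 0$, so $N_1$ along its characteristic satisfies a linear ODE with non-negative forcing and non-negative initial/boundary value, forcing $N_1\ge 0$ by the integrating-factor (Duhamel) formula; symmetrically for $N_2,N_3,N_4$. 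Iterating this observation inside the Picard scheme, or equivalently working with the modified operator that has the damping built in, keeps every iterate non-negative, and the fixed point inherits non-negativity.

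Next, in Section \ref{ooolsppikksi}, I would set up the invariant set and the contraction. Fix a radius $R$ (built from $\|N^0\|_1$ and the $C^1$-norms of the eight boundary functions) and let $\mathcal{B}_R=\{N\in C(\mathscr{P})^4: N\ge 0,\ \|N\|\le R\}$. Using the integral representation and the bound $|Q(N_a)-Q(N_b)|\le 2cS\cdot 2R\,\|N_a-N_b\|$ together with $\int_{\sigma_i}^{t}ds\le T$ (more precisely the parameter $p$ absorbs $T$ and the travel-time geometry along the slanted characteristics, which is where $\theta\ne 0$, $c$, and $b_i-a_i$ enter), one gets $\|\mathcal{T}(N_a)-\mathcal{T}(N_b)\|\le 4pq\,\|N_a-N_b\|$ after identifying $q$ with the data-dependent amplitude $2cSR$ and $p$ with the geometric/time factor; the hypothesis $pq\le\frac14$ then makes $\mathcal{T}$ a contraction with constant $\le 1$. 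Checking that $\mathcal{T}$ maps $\mathcal{B}_R$ into itself requires choosing $R$ so that $\Phi_i^0 + (\text{source bound}) \le R$, i.e. a smallness/consistency condition again encoded in $pq\le\frac14$. Banach's fixed-point theorem then gives existence and uniqueness in $\mathcal{B}_R$; a separate short argument (Gronwall along characteristics, using the non-negativity already established so that $Q$ is controlled by the solution itself) upgrades uniqueness to the whole class of non-negative bounded continuous solutions, not merely those in $\mathcal{B}_R$.

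The remaining step is the derivative bounds and the piecewise differentiability. Differentiating the integral representation formally in $t,x,y$ gives integral equations for $\partial N_i$ that are again of fixed-point type, linear in the derivatives with coefficients involving $N$ and $\partial N$; since $N$ is already known and bounded, a second contraction (or a Gronwall estimate on $\|\partial N\|$ along characteristics) yields explicit bounds on $\partial_t N_i,\partial_x N_i,\partial_y N_i$ in terms of $p,q$ and the $C^1$-data. The only places where these derivatives can fail to exist are the backward characteristic planes issued from the edges of $\mathscr{P}$ (where the "which face do I hit first" alternative switches, and where the compatibility conditions (\ref{eq:kqiiq})--(\ref{eq:mqppa}) guarantee continuity but not $C^1$-matching of the data); there are finitely many such planes inside $\mathscr{P}$, which is exactly the exceptional set in the statement. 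I expect the main obstacle to be purely bookkeeping rather than conceptual: correctly enumerating, for each of the four components and each region of $\mathscr{P}$ cut out by those characteristic planes, which boundary datum is picked up and what the entry time $\sigma_i$ is, and then verifying uniformly over all these cases that the contraction constant is genuinely $\le 4pq$ — the slant $\theta\ne 0$ makes the geometry of the backward characteristics (and hence the definition of $p$) noticeably more involved than in the $\theta=0$ case treated in \cite{key-1}.
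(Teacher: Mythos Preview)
Your outline is reasonable and would lead to a correct proof, but it diverges from the paper in two substantive ways.

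\textbf{Method.} The paper does \emph{not} use Banach's contraction principle for existence. Instead it works with the seminorm $\mathscr{V}(N)=\max\{\|N\|,\|\partial_t N\|,\|\partial_x N\|,\|\partial_y N\|\}$, proves $\mathscr{V}(\mathcal{T}(M))\le p\,\mathscr{V}(M)^2+q$, and observes that the ball $\mathscr{B}_R=\{N:\mathscr{V}(N)\le R\}$ is equicontinuous, hence relatively compact in $C(\mathscr{P};\mathbb{R}^4)$ by Arzel\`a--Ascoli. Schauder's theorem then yields a fixed point directly inside $\mathscr{B}_R$, so the derivative bounds come for free. The contraction estimate $\|\mathcal{T}(M)-\mathcal{T}(N)\|\le p'(\|M\|+\|N\|)\|M-N\|$ with $p'/p\le 1/2$ is used only afterwards, for uniqueness. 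Your Banach route is more elementary but forces you to recover the piecewise-$C^1$ regularity separately; your ``second contraction for derivatives'' has a circularity (you cannot differentiate the integral representation until you already know $N$ is differentiable), and the honest way out is to show the Picard iterates satisfy $\mathscr{V}(N^{(k)})\le R$ uniformly and then pass to the limit via equicontinuity of the derivatives on each region---which is essentially the Arzel\`a--Ascoli step the paper does up front.

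\textbf{The constants $p,q$.} Your identification ``$q=2cSR$, $p=$ geometric factor'' does not match the theorem. In the paper, $q$ is purely data-dependent (a weighted maximum of the $C^1$ norms $\|N_i^0\|_1$, $\|N_1^-\|_1$, \dots), and $p$ is purely geometric (built from $T$, $b_i-a_i$, $c$, $\theta$); neither involves the ball radius $R$. The condition $pq\le\tfrac14$ is precisely what makes $pR^2-R+q\le 0$ solvable, so that $\mathcal{T}(\mathscr{B}_R)\subset\mathscr{B}_R$. Your formulation makes $q$ depend on $R$, which is circular. Also note that the paper proves uniqueness only among solutions satisfying $\|N\|\le\frac{1+\sqrt{1-4pq}}{2p}$, not in the whole class of non-negative bounded continuous solutions; your Gronwall upgrade is an overclaim relative to what is actually shown.
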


\section{Non-negativity of the solution\label{kooollooaooaiei}}
\begin{thm}
\label{thm::etaatyeyaiioa-1-1}If they exist the solutions of the
problem $\Sigma$ (eq.(\ref{eq:erter})-(\ref{eq:mqppa})) are non-negative
. 
\end{thm}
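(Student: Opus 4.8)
The plan is to establish non-negativity along characteristics by a continuity (or ``first exit time'') argument, exploiting the structure of the collision term. Each equation in $\Sigma$ is a transport equation along a constant-velocity characteristic field; integrating the $i$-th equation along its characteristic emanating from the parabolic boundary (the union of $\{t=0\}$ and the inflow portions of the spatial boundary) reduces the system to a coupled integral system. For instance, along the characteristics of $N_1$ one writes $\frac{d}{ds}N_1(\text{char}) = 2cS\bigl(N_2 N_3 - N_1 N_4\bigr)$, and similarly for the others, with $N_2,N_3$ decreasing and $N_1,N_4$ increasing under the raw collision gain/loss split.

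The key step is to rewrite each ODE along its characteristic in the ``loss-gain'' form that makes the loss term linear in the unknown being solved for: for $N_1$, $\frac{d}{ds}N_1 + (2cS N_4)\,N_1 = 2cS\,N_2 N_3$, so that by the integrating-factor (Duhamel) formula
\[
N_1(\text{char at }s) = N_1(\text{boundary})\,e^{-\int 2cS N_4}\; +\; \int 2cS\, N_2 N_3 \, e^{-\int_{\cdot}^{s} 2cS N_4}\,,
\]
and analogously $N_4$ has the same structure with gain $2cS N_2 N_3$ and loss rate $2cS N_1$, while $N_2$ and $N_3$ have gain $2cS N_1 N_4$ and loss rate $2cS N_3$ (resp.\ $2cS N_2$). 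In each such formula, if all four densities are non-negative up to ``time'' $s$ along their respective characteristics, then the right-hand side is manifestly non-negative, because the boundary data are non-negative and the integrand is a product of non-negative quantities times a positive exponential.

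I would then run the standard argument: suppose, for contradiction, that some $N_i$ first becomes negative. Let $\tau$ be the infimum of times $t$ at which $\min_i \inf_{(x,y)} N_i(t,x,y) < 0$; by continuity of the (assumed) solution, at $t=\tau$ all four densities are $\geq 0$ everywhere, and for $t<\tau$ they are $\geq 0$. On the slab $[0,\tau]$ every characteristic segment used in the Duhamel formulas lies in the region where all densities are non-negative, so each $N_i$ at $t=\tau$ equals a non-negative boundary value plus a non-negative integral, hence $N_i(\tau,\cdot)\geq 0$ with a quantitative lower bound; pushing slightly past $\tau$ and using uniform continuity (the derivatives are bounded by hypothesis on any solution under consideration, or one can iterate on small time steps $\Delta t$ chosen so the exponential factors stay near $1$) contradicts the minimality of $\tau$. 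A cleaner packaging avoiding the contradiction: fix a small $\Delta t$, prove non-negativity on $[0,\Delta t]$ directly from the Duhamel formulas (only boundary/initial data enter, all non-negative), then restart from $t=\Delta t$ with the now-non-negative ``initial'' data on the slab, and bootstrap across $[0,T]$ in finitely many steps.

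The main obstacle is handling the inflow-boundary contributions correctly: a characteristic of, say, $N_1$ may exit the spatial rectangle through $x=a_1$ or through $y=a_2$ before reaching $t=0$, and one must verify that in all cases the relevant value fed into the Duhamel formula is one of the prescribed non-negative data $N_1^0$, $N_1^-$, $N_1^{--}$ (and that the compatibility conditions (\ref{eq:kqiiq})--(\ref{eq:slos}) make this well-defined at the edges where two boundary pieces meet). This is a bookkeeping matter — identifying, for each of the four velocity directions, which faces of $\partial\mathscr{P}$ are inflow faces and checking the geometry is consistent for all $0\leq\theta<\pi/2$ — rather than a deep analytic difficulty, but it is where the argument must be written carefully. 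I do not need to establish existence or bounds here; Theorem~\ref{thm::etaatyeyaiioa-1-1} is purely conditional (``if they exist''), so I only need the sign propagation, and the Duhamel representation plus the slab-by-slab bootstrap delivers it.
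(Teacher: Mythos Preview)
Your approach is correct and is the standard positivity argument for discrete velocity models: write each equation along its characteristic in Duhamel form with the natural loss rate ($2cSN_4$ for $N_1$, $2cSN_3$ for $N_2$, etc.), so that the gain term is a product of the \emph{other} densities, and then propagate non-negativity by a continuity/first-exit-time argument. The slab-by-slab bootstrap you sketch can be made rigorous by a short Gronwall estimate: with $K=\|N\|$ and $\epsilon(\delta)=-\min_{i}\min_{[\tau,\tau+\delta]\times[a_1,b_1]\times[a_2,b_2]}N_i$, the Duhamel formula on $[\tau,\tau+\delta]$ yields $\epsilon(\delta)\le 2cSK e^{2cSK\delta}\,\delta\,\epsilon(\delta)$, forcing $\epsilon(\delta)=0$ for small $\delta$. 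Your identification of the inflow-face bookkeeping as the only delicate point is accurate.

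The paper takes a related but structurally different route. Rather than bootstrapping in time, it introduces a parameter $\sigma\ge 2cS$, adds $\sigma\rho(N)N_i$ to both sides, and then for a fixed $M$ solves the \emph{linear} system $\Sigma_{\sigma,M}$ with coefficients $\rho(|M|)$ and source $Q_i^\sigma(|M|)$. The point of the $\sigma$-shift is that for $\sigma\ge 2cS$ the modified sources $Q_i^\sigma(|M|)$ are manifestly non-negative for \emph{any} $M$, so the explicit characteristic formulas give $\mathcal{T}^\sigma(M)\ge 0$ without any bootstrap. Non-negativity of solutions of $\Sigma$ is then read off from the fixed-point characterization $\mathcal{T}^\sigma(M)=M$. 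Your argument is more elementary and self-contained (no auxiliary operator, no $|M|$ trick), and uses the minimal loss rate rather than the inflated $\sigma\rho$; the paper's device, on the other hand, sidesteps the continuity bootstrap entirely by making positivity hold for the linearized map uniformly in $M$, at the cost of the extra machinery and of relying on the equivalence between fixed points of $\mathcal{T}^\sigma$ and solutions of $\Sigma$.
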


\begin{proof}
The proof follows classical arguments ( in \cite{key-1}). Let $\sigma>0.$
Let's put 
\begin{equation}
\rho\left(N\right)=\sum_{i=1}^{4}N_{i}\label{eq:kiid}
\end{equation}
and

\begin{equation}
\begin{cases}
Q_{1}^{\sigma}\left(N\right)=\sigma\rho\left(N\right)N_{1}+Q\left(N\right)\\
Q_{2}^{\sigma}\left(N\right)=\sigma\rho\left(N\right)N_{2}-Q\left(N\right)\\
Q_{3}^{\sigma}\left(N\right)=\sigma\rho\left(N\right)N_{3}-Q\left(N\right)\\
Q_{4}^{\sigma}\left(N\right)=\sigma\rho\left(N\right)N_{4}+Q\left(N\right)
\end{cases}.
\end{equation}
Then for all $\sigma>0,$ $\Sigma$ is equivalent to system $\left(\Sigma_{\sigma}\right)$
\begin{align}
{\textstyle \dfrac{\p N_{1}}{\p t}+c\cos\theta\dfrac{\p N_{1}}{\p x}} & {\displaystyle +c\sin\theta\dfrac{\p N_{1}}{\p y}+\sigma\rho\left(N\right)N_{1}}=Q_{1}^{\sigma}\left(N\right),\;\left(t,x,y\right)\in\mathring{\mathscr{P}}\label{eq:ssdffz-1}\\
\dfrac{\p N_{2}}{\p t}-c\sin\theta\dfrac{\p N_{2}}{\p x} & +c\cos\theta\dfrac{\p N_{2}}{\p y}+\sigma\rho\left(N\right)N_{2}=Q_{2}^{\sigma}\left(N\right),\;\left(t,x,y\right)\in\mathring{\mathscr{P}}\\
\dfrac{\p N_{3}}{\p t}+c\sin\theta\dfrac{\p N_{3}}{\p x} & -c\cos\theta\dfrac{\p N_{3}}{\p y}+\sigma\rho\left(N\right)N_{3}=Q_{3}^{\sigma}\left(N\right),\;\left(t,x,y\right)\in\mathring{\mathscr{P}}\\
\dfrac{\p N_{4}}{\p t}-c\cos\theta\dfrac{\p N_{4}}{\p x} & -c\sin\theta\dfrac{\p N_{4}}{\p y}+\sigma\rho\left(N\right)N_{4}=Q_{4}^{\sigma}\left(N\right),\;\left(t,x,y\right)\in\mathring{\mathscr{P}}\label{eq:qqqqsa-1}
\end{align}

with the conditions (\ref{eq:lsos})-(\ref{eq:mqppa}). 

For $M=\left(M_{1},M_{2},M_{3},M_{4}\right)$ fixed 4-tuple of continuous
functions defined from $\begin{array}{r}
\mathscr{P}\end{array}$ to $\R,$ let's put $\left|M\right|=\left(\left|M_{1}\right|,\left|M_{2}\right|,\left|M_{3}\right|,\left|M_{4}\right|\right)$.
Let's consider the following linear system $\left(\Sigma_{\sigma,M}\right)$
(\ref{eq:ssdffz-1-1})-(\ref{eq:qqqqsa-1-1}) 
\begin{align}
{\textstyle \dfrac{\p N_{1}}{\p t}+c\cos\theta\dfrac{\p N_{1}}{\p x}} & {\displaystyle +c\sin\theta\dfrac{\p N_{1}}{\p y}+\sigma\rho\left(\left|M\right|\right)N_{1}}=Q_{1}^{\sigma}\left(\left|M\right|\right),\;\left(t,x,y\right)\in\mathring{\mathscr{P}}\label{eq:ssdffz-1-1}\\
\dfrac{\p N_{2}}{\p t}-c\sin\theta\dfrac{\p N_{2}}{\p x} & +c\cos\theta\dfrac{\p N_{2}}{\p y}+\sigma\rho\left(\left|M\right|\right)N_{2}=Q_{2}^{\sigma}\left(\left|M\right|\right),\;\left(t,x,y\right)\in\mathring{\mathscr{P}}\label{eq:skiz}\\
\dfrac{\p N_{3}}{\p t}+c\sin\theta\dfrac{\p N_{3}}{\p x} & -c\cos\theta\dfrac{\p N_{3}}{\p y}+\sigma\rho\left(\left|M\right|\right)N_{3}=Q_{3}^{\sigma}\left(\left|M\right|\right),\;\left(t,x,y\right)\in\mathring{\mathscr{P}}\label{eq:losik}\\
\dfrac{\p N_{4}}{\p t}-c\cos\theta\dfrac{\p N_{4}}{\p x} & -c\sin\theta\dfrac{\p N_{4}}{\p y}+\sigma\rho\left(\left|M\right|\right)N_{4}=Q_{4}^{\sigma}\left(\left|M\right|\right),\;\left(t,x,y\right)\in\mathring{\mathscr{P}}\label{eq:qqqqsa-1-1}
\end{align}
with the conditions (\ref{eq:lsos})-(\ref{eq:mqppa}) . It is easy
to see that $\left(\Sigma_{\sigma,M}\right)$ has an unique solution
$N_{M}=\left(N_{1},N_{2},N_{3}\right)$ defined as followed:

for inequalities $f\left(t,x,y\right)\leq0$ and $g\left(t,x,y\right)\leq0$
defined on $\mathscr{P}$, let $\mathbb{I}_{\begin{cases}
f\left(t,x,y\right)\leq0\\
g\left(t,x,y\right)\leq0
\end{cases}}$ denote the identity function of the set 

$\left\{ \left(t,x,y\right)\in\mathscr{P}:f\left(t,x,y\right)\leq0\text{ and }g\left(t,x,y\right)\leq0\right\} ;$
then 
\begin{multline}
N_{1}\left(t,x,y\right)=N_{1}^{A}\left(t,x,y\right)\cdot\mathbb{I}_{\begin{cases}
x-ct\cos\theta\geq a_{1}\\
y-ct\sin\theta\geq a_{2}
\end{cases}}\left(t,x,y\right)+\\
N_{1}^{B}\left(t,x,y\right)\cdot\mathbb{I}_{\begin{cases}
x-ct\cos\theta\leq a_{1}\\
x\sin\theta-y\cos\theta\leq a_{1}\sin\theta-a_{2}\cos\theta
\end{cases}}\left(t,x,y\right)+\\
N_{1}^{C}\left(t,x,y\right)\cdot\mathbb{I}_{\begin{cases}
y-ct\sin\theta\leq a_{2}\\
x\sin\theta-y\cos\theta\geq a_{1}\sin\theta-a_{2}\cos\theta
\end{cases}}\left(t,x,y\right)\label{aoalal}
\end{multline}
where

\begin{multline}
N_{1}^{A}\left(t,x,y\right)=\Biggl({\displaystyle \int_{0}^{t}}e^{\sigma\int_{0}^{s}\rho\left(\left|M\right|\right)\left(r,x+c\left(r-t\right)\cos\theta,y+c\left(r-t\right)\sin\theta\right)dr}\cdot Q_{1}^{\sigma}\left(\left|M\right|\right)\\
\left(s,x+c\left(s-t\right)\cos\theta,y+c\left(s-t\right)\sin\theta\right)ds+N_{1}^{0}\left(x-ct\cos\theta,y-ct\sin\theta\right)\Biggr)\cdot\\
e^{-\sigma{\textstyle {\displaystyle \int_{0}^{t}}}\rho\left(\left|M\right|\right)\left(s,x+c\left(s-t\right)\cos\theta,y+c\left(s-t\right)\sin\theta\right)ds}.\label{eq:olole-1}
\end{multline}

\begin{multline}
N_{1}^{B}\left(t,x,y\right)=\\
\Biggl({\textstyle {\displaystyle \int_{0}^{\frac{1}{c\cos\theta}x-\frac{a_{1}}{c\cos\theta}}}e^{\sigma\int_{0}^{s}\rho\left(\left|M\right|\right)\left(r+t-\frac{1}{c\cos\theta}x+\frac{a_{1}}{c\cos\theta},rc\cos\theta+a_{1},rc\sin\theta-\frac{\sin\theta}{\cos\theta}x+y+\frac{\sin\theta}{\cos\theta}a_{1}\right)dr}}\cdot\\
{\textstyle Q_{1}^{\sigma}\left(\left|M\right|\right)\left(s+t-\frac{1}{c\cos\theta}x+\frac{a_{1}}{c\cos\theta},sc\cos\theta+a_{1},sc\sin\theta-\frac{\sin\theta}{\cos\theta}x+y+\frac{\sin\theta}{\cos\theta}a_{1}\right)ds}+\\
{\textstyle N_{1}^{-}\left(t-\frac{1}{c\cos\theta}x+\frac{a_{1}}{c\cos\theta},-\frac{\sin\theta}{\cos\theta}x+y+\frac{\sin\theta}{\cos\theta}a_{1}\right)\Biggr)}\cdot\\
{\textstyle e^{-\sigma{\displaystyle \int_{0}^{t}}\rho\left(\left|M\right|\right)\left(s+t-\frac{1}{c\cos\theta}x+\frac{a_{1}}{c\cos\theta},sc\cos\theta+a_{1},sc\sin\theta-\frac{\sin\theta}{\cos\theta}x+y+\frac{\sin\theta}{\cos\theta}a_{1}\right)ds}}.\label{eq:olole-1-1}
\end{multline}

\begin{multline}
N_{1}^{C}\left(t,x,y\right)=\\
\Biggl({\textstyle {\displaystyle \int_{0}^{\frac{1}{c\sin\theta}y-\frac{a_{2}}{c\sin\theta}}}e^{\sigma\int_{0}^{s}\rho\left(\left|M\right|\right)\left(r+t-\frac{1}{c\sin\theta}y+\frac{a_{2}}{c\sin\theta},rc\cos\theta+x-\frac{\cos\theta}{\sin\theta}y+\frac{\cos\theta}{\sin\theta}a_{2},rc\sin\theta+a_{2}\right)dr}}\cdot\\
{\textstyle Q_{1}^{\sigma}\left(\left|M\right|\right)\left(s+t-\frac{1}{c\sin\theta}y+\frac{a_{2}}{c\sin\theta},sc\cos\theta+x-\frac{\cos\theta}{\sin\theta}y+\frac{\cos\theta}{\sin\theta}a_{2},sc\sin\theta+a_{2}\right)ds}+\\
{\textstyle N_{1}^{--}\left(t-\frac{1}{c\sin\theta}y+\frac{a_{2}}{c\sin\theta},x-\frac{\cos\theta}{\sin\theta}y+\frac{\cos\theta}{\sin\theta}a_{2}\right)\Biggr)}\cdot\\
{\textstyle e^{-\sigma{\displaystyle \int_{0}^{\frac{1}{c\sin\theta}y-\frac{a_{2}}{c\sin\theta}}}\rho\left(\left|M\right|\right)\left(s+t-\frac{1}{c\sin\theta}y+\frac{a_{2}}{c\sin\theta},sc\cos\theta+x-\frac{\cos\theta}{\sin\theta}y+\frac{\cos\theta}{\sin\theta}a_{2},sc\sin\theta+a_{2}\right)ds}}.\label{eq:olole-1-1-1-1}
\end{multline}
\begin{multline}
N_{2}\left(t,x,y\right)=N_{2}^{A}\left(t,x,y\right)\cdot\mathbb{I}_{\begin{cases}
x-ct\cos\theta\geq a_{1}\\
y-ct\sin\theta\geq a_{2}
\end{cases}}\left(t,x,y\right)+\\
N_{2}^{B}\left(t,x,y\right)\cdot\mathbb{I}_{\begin{cases}
x-ct\cos\theta\leq a_{1}\\
x\sin\theta-y\cos\theta\leq a_{1}\sin\theta-a_{2}\cos\theta
\end{cases}}\left(t,x,y\right)+\\
N_{2}^{C}\left(t,x,y\right)\cdot\mathbb{I}_{\begin{cases}
y-ct\sin\theta\leq a_{2}\\
x\sin\theta-y\cos\theta\geq a_{1}\sin\theta-a_{2}\cos\theta
\end{cases}}\left(t,x,y\right)\label{aoalal-1}
\end{multline}

where
\begin{multline}
N_{2}^{A}\left(t,x,y\right)=\Biggl({\textstyle {\displaystyle \int_{0}^{t}}e^{\sigma\int_{0}^{s}\rho\left(\left|M\right|\right)\left(r,x-c\left(r-t\right)\sin\theta,y+c\left(r-t\right)\cos\theta\right)dr}}\cdot\\
{\textstyle Q_{2}^{\sigma}\left(\left|M\right|\right)\left(s,x-c\left(s-t\right)\sin\theta,y+c\left(s-t\right)\cos\theta\right)ds}+{\textstyle N_{2}^{0}\left(x+ct\sin\theta,y-ct\cos\theta\right)\Biggr)}\cdot\\
{\textstyle e^{-\sigma{\displaystyle \int_{0}^{t}}\rho\left(\left|M\right|\right)\left(s,x-c\left(s-t\right)\sin\theta,y+c\left(s-t\right)\cos\theta\right)ds}}.\label{eq:olole-1-1-1-1-1-1}
\end{multline}

\begin{multline}
N_{2}^{B}\left(t,x,y\right)=\\
\Biggl({\textstyle {\displaystyle \int_{0}^{-\frac{1}{c\sin\theta}x+\frac{b_{1}}{c\sin\theta}}}e^{\sigma\int_{0}^{s}\rho\left(\left|M\right|\right)\left(r+t+\frac{1}{c\sin\theta}x-\frac{b_{1}}{c\sin\theta},-rc\sin\theta+b_{1},rc\cos\theta+\frac{\cos\theta}{\sin\theta}x+y-\frac{\cos\theta}{\sin\theta}b_{1}\right)dr}}\cdot\\
{\textstyle Q_{2}^{\sigma}\left(\left|M\right|\right)\left(s+t+\frac{1}{c\sin\theta}x-\frac{b_{1}}{c\sin\theta},-sc\sin\theta+b_{1},sc\cos\theta+\frac{\cos\theta}{\sin\theta}x+y-\frac{\cos\theta}{\sin\theta}b_{1}\right)ds}+\\
{\textstyle N_{2}^{+}\left(t+\frac{1}{c\sin\theta}x-\frac{b_{1}}{c\sin\theta},\frac{\cos\theta}{\sin\theta}x+y-\frac{\cos\theta}{\sin\theta}b_{1}\right)\Biggr)}\cdot\\
{\textstyle e^{-\sigma{\displaystyle \int_{0}^{-\frac{1}{c\sin\theta}x+\frac{b_{1}}{c\sin\theta}}}\rho\left(\left|M\right|\right)\left(s+t+\frac{1}{c\sin\theta}x-\frac{b_{1}}{c\sin\theta},-sc\sin\theta+b_{1},sc\cos\theta+\frac{\cos\theta}{\sin\theta}x+y-\frac{\cos\theta}{\sin\theta}b_{1}\right)ds}}.\label{eq:olole-1-1-1-1-2}
\end{multline}

\begin{multline}
N_{2}^{C}\left(t,x,y\right)=\\
\Biggl({\textstyle {\displaystyle \int_{0}^{\frac{1}{c\cos\theta}y-\frac{a_{2}}{c\cos\theta}}}e^{\sigma\int_{0}^{s}\rho\left(\left|M\right|\right)\left(r+t-\frac{1}{c\cos\theta}y+\frac{a_{2}}{c\cos\theta},-rc\sin\theta+x+\frac{\sin\theta}{\cos\theta}y-\frac{\sin\theta}{\cos\theta}a_{2},rc\cos\theta+a_{2}\right)dr}}\cdot\\
{\textstyle Q_{2}^{\sigma}\left(\left|M\right|\right)\left(s+t-\frac{1}{c\cos\theta}y+\frac{a_{2}}{c\cos\theta},-sc\sin\theta+x+\frac{\sin\theta}{\cos\theta}y-\frac{\sin\theta}{\cos\theta}a_{2},sc\cos\theta+a_{2}\right)ds}+\\
{\textstyle N_{2}^{--}\left(t-\frac{1}{c\cos\theta}y+\frac{a_{2}}{c\cos\theta},x+\frac{\sin\theta}{\cos\theta}y-\frac{\sin\theta}{\cos\theta}a_{2}\right)\Biggr)}\cdot\\
{\textstyle e^{-\sigma{\displaystyle \int_{0}^{\frac{1}{c\cos\theta}y-\frac{a_{2}}{c\cos\theta}}}\rho\left(\left|M\right|\right)\left(s+t-\frac{1}{c\cos\theta}y+\frac{a_{2}}{c\cos\theta},-sc\sin\theta+x+\frac{\sin\theta}{\cos\theta}y-\frac{\sin\theta}{\cos\theta}a_{2},sc\cos\theta+a_{2}\right)ds}}.\label{eq:olole-1-1-1-1-2-1}
\end{multline}

\begin{multline}
N_{3}\left(t,x,y\right)=N_{3}^{A}\left(t,x,y\right)\cdot\mathbb{I}_{\begin{cases}
x-ct\sin\theta\geq a_{1}\\
y+ct\cos\theta\leq b_{2}
\end{cases}}\left(t,x,y\right)+\\
N_{3}^{B}\left(t,x,y\right)\cdot\mathbb{I}_{\begin{cases}
x-ct\sin\theta\leq a_{1}\\
x\cos\theta+y\sin\theta\leq a_{1}\cos\theta+b_{2}\sin\theta
\end{cases}}\left(t,x,y\right)+\\
N_{3}^{C}\left(t,x,y\right)\cdot\mathbb{I}_{\begin{cases}
y+ct\cos\theta\geq b_{2}\\
x\cos\theta+y\sin\theta\geq a_{1}\cos\theta+b_{2}\sin\theta
\end{cases}}\left(t,x,y\right)\label{aoalal-1-1}
\end{multline}
where 
\begin{multline}
N_{3}^{A}\left(t,x,y\right)=\Biggl({\displaystyle \int_{0}^{t}}e^{\sigma\int_{0}^{s}\rho\left(\left|M\right|\right)\left(r,x+c\left(r-t\right)\sin\theta,y-c\left(r-t\right)\cos\theta\right)dr}\cdot Q_{3}^{\sigma}\left(\left|M\right|\right)\\
\left(s,x+c\left(s-t\right)\sin\theta,y-c\left(s-t\right)\cos\theta\right)ds+N_{3}^{0}\left(x-ct\sin\theta,y+ct\cos\theta\right)\Biggr)\cdot\\
e^{-\sigma{\textstyle {\displaystyle \int_{0}^{t}}}\rho\left(\left|M\right|\right)\left(s,x+c\left(s-t\right)\sin\theta,y-c\left(s-t\right)\cos\theta\right)ds}.\label{eq:olole-1-2}
\end{multline}
\begin{multline}
N_{3}^{B}\left(t,x,y\right)=\\
\Biggl({\textstyle {\displaystyle \int_{0}^{\frac{1}{c\sin\theta}x-\frac{a_{1}}{c\sin\theta}}}e^{\sigma\int_{0}^{s}\rho\left(\left|M\right|\right)\left(r+t-\frac{1}{c\sin\theta}x+\frac{a_{1}}{c\sin\theta},rc\sin\theta+a_{1},-rc\cos\theta+\frac{\cos\theta}{\sin\theta}x+y-\frac{\cos\theta}{\sin\theta}a_{1}\right)dr}}\cdot\\
{\textstyle Q_{3}^{\sigma}\left(\left|M\right|\right)\left(s+t-\frac{1}{c\sin\theta}x+\frac{a_{1}}{c\sin\theta},sc\sin\theta+a_{1},-sc\cos\theta+\frac{\cos\theta}{\sin\theta}x+y-\frac{\cos\theta}{\sin\theta}a_{1}\right)ds}+\\
{\textstyle N_{3}^{-}\left(t-\frac{1}{c\sin\theta}x+\frac{a_{1}}{c\sin\theta},\frac{\cos\theta}{\sin\theta}x+y-\frac{\cos\theta}{\sin\theta}a_{1}\right)\Biggr)}\cdot\\
{\textstyle e^{-\sigma{\displaystyle \int_{0}^{\frac{1}{c\sin\theta}x-\frac{a_{1}}{c\sin\theta}}}\rho\left(\left|M\right|\right)\left(s+t-\frac{1}{c\sin\theta}x+\frac{a_{1}}{c\sin\theta},sc\sin\theta+a_{1},-sc\cos\theta+\frac{\cos\theta}{\sin\theta}x+y-\frac{\cos\theta}{\sin\theta}a_{1}\right)ds}}.\label{eq:olole-1-1-1}
\end{multline}
\begin{multline}
N_{3}^{C}\left(t,x,y\right)=\\
\Biggl({\textstyle {\displaystyle \int_{0}^{-\frac{1}{c\cos\theta}y+\frac{b_{2}}{c\cos\theta}}}e^{\sigma\int_{0}^{s}\rho\left(\left|M\right|\right)\left(r+t+\frac{1}{c\cos\theta}y-\frac{b_{2}}{c\cos\theta},rc\sin\theta+x+\frac{\sin\theta}{\cos\theta}y-\frac{\sin\theta}{\cos\theta}b_{2},-rc\cos\theta+b_{2}\right)dr}}\cdot\\
{\textstyle Q_{3}^{\sigma}\left(\left|M\right|\right)\left(s+t+\frac{1}{c\cos\theta}y-\frac{b_{2}}{c\cos\theta},sc\sin\theta+x+\frac{\sin\theta}{\cos\theta}y-\frac{\sin\theta}{\cos\theta}b_{2},-sc\cos\theta+b_{2}\right)ds}+\\
{\textstyle N_{3}^{++}\left(t+\frac{1}{c\cos\theta}y-\frac{b_{2}}{c\cos\theta},x+\frac{\sin\theta}{\cos\theta}y-\frac{\sin\theta}{\cos\theta}b_{2}\right)\Biggr)}\cdot\\
{\textstyle e^{-\sigma{\displaystyle \int_{0}^{-\frac{1}{c\cos\theta}y+\frac{b_{2}}{c\cos\theta}}}\rho\left(\left|M\right|\right)\left(s+t+\frac{1}{c\cos\theta}y-\frac{b_{2}}{c\cos\theta},sc\sin\theta+x+\frac{\sin\theta}{\cos\theta}y-\frac{\sin\theta}{\cos\theta}b_{2},-sc\cos\theta+b_{2}\right)ds}}.\label{eq:olole-1-1-1-1-1}
\end{multline}

\begin{multline}
N_{4}\left(t,x,y\right)=N_{4}^{A}\left(t,x,y\right)\cdot\mathbb{I}_{\begin{cases}
x+ct\cos\theta\leq b_{1}\\
y+ct\sin\theta\leq b_{2}
\end{cases}}\left(t,x,y\right)+\\
N_{4}^{B}\left(t,x,y\right)\cdot\mathbb{I}_{\begin{cases}
x+ct\cos\theta\geq b_{1}\\
x\sin\theta-y\cos\theta\geq b_{1}\sin\theta-b_{2}\cos\theta
\end{cases}}\left(t,x,y\right)+\\
N_{4}^{C}\left(t,x,y\right)\cdot\mathbb{I}_{\begin{cases}
y+ct\sin\theta\geq b_{2}\\
x\sin\theta-y\cos\theta\leq b_{1}\sin\theta-b_{2}\cos\theta
\end{cases}}\left(t,x,y\right)\label{aoalal-1-1-1}
\end{multline}
where 
\begin{multline}
N_{4}^{A}\left(t,x,y\right)=\Biggl({\displaystyle \int_{0}^{t}}e^{\sigma\int_{0}^{s}\rho\left(\left|M\right|\right)\left(r,x-c\left(r-t\right)\cos\theta,y-c\left(r-t\right)\sin\theta\right)dr}\cdot Q_{4}^{\sigma}\left(\left|M\right|\right)\\
\left(s,x-c\left(s-t\right)\cos\theta,y-c\left(s-t\right)\sin\theta\right)ds+N_{4}^{0}\left(x+ct\cos\theta,y+ct\sin\theta\right)\Biggr)\cdot\\
e^{-\sigma{\textstyle {\displaystyle \int_{0}^{t}}}\rho\left(\left|M\right|\right)\left(s,x-c\left(s-t\right)\cos\theta,y-c\left(s-t\right)\sin\theta\right)ds}.\label{eq:olole-1-2-1}
\end{multline}
\begin{multline}
N_{4}^{B}\left(t,x,y\right)=\\
\Biggl({\textstyle {\displaystyle \int_{0}^{-\frac{1}{c\cos\theta}x+\frac{b_{1}}{c\cos\theta}}}e^{\sigma\int_{0}^{s}\rho\left(\left|M\right|\right)\left(r+t+\frac{1}{c\cos\theta}x-\frac{b_{1}}{c\cos\theta},-rc\cos\theta+b_{1},-rc\sin\theta-\frac{\sin\theta}{\cos\theta}x+y+\frac{\sin\theta}{\cos\theta}b_{1}\right)dr}}\cdot\\
{\textstyle Q_{4}^{\sigma}\left(\left|M\right|\right)\left(s+t+\frac{1}{c\cos\theta}x-\frac{b_{1}}{c\cos\theta},-sc\cos\theta+b_{1},-sc\sin\theta-\frac{\sin\theta}{\cos\theta}x+y+\frac{\sin\theta}{\cos\theta}b_{1}\right)ds}+\\
{\textstyle N_{4}^{+}\left(t+\frac{1}{c\cos\theta}x-\frac{b_{1}}{c\cos\theta},-\frac{\sin\theta}{\cos\theta}x+y+\frac{\sin\theta}{\cos\theta}b_{1}\right)\Biggr)}\cdot\\
{\textstyle e^{-\sigma{\displaystyle \int_{0}^{-\frac{1}{c\cos\theta}x+\frac{b_{1}}{c\cos\theta}}}\rho\left(\left|M\right|\right)\left(s+t+\frac{1}{c\cos\theta}x-\frac{b_{1}}{c\cos\theta},-sc\cos\theta+b_{1},-sc\sin\theta-\frac{\sin\theta}{\cos\theta}x+y+\frac{\sin\theta}{\cos\theta}b_{1}\right)ds}}.\label{eq:olole-1-1-1-2}
\end{multline}
\begin{multline}
N_{4}^{C}\left(t,x,y\right)=\\
\Biggl({\textstyle {\displaystyle \int_{0}^{-\frac{1}{c\sin\theta}y+\frac{b_{2}}{c\sin\theta}}}e^{\sigma\int_{0}^{s}\rho\left(\left|M\right|\right)\left(r+t+\frac{1}{c\sin\theta}y-\frac{b_{2}}{c\sin\theta},-rc\cos\theta+x-\frac{\cos\theta}{\sin\theta}y+\frac{\cos\theta}{\sin\theta}b_{2},-rc\sin\theta+b_{2}\right)dr}}\cdot\\
{\textstyle Q_{4}^{\sigma}\left(\left|M\right|\right)\left(s+t+\frac{1}{c\sin\theta}y-\frac{b_{2}}{c\sin\theta},-sc\cos\theta+x-\frac{\cos\theta}{\sin\theta}y+\frac{\cos\theta}{\sin\theta}b_{2},-sc\sin\theta+b_{2}\right)ds}+\\
{\textstyle N_{4}^{++}\left(t+\frac{1}{c\sin\theta}y-\frac{b_{2}}{c\sin\theta},x-\frac{\cos\theta}{\sin\theta}y+\frac{\cos\theta}{\sin\theta}b_{2}\right)\Biggr)}\cdot\\
{\textstyle e^{-\sigma{\displaystyle \int_{0}^{-\frac{1}{c\sin\theta}y+\frac{b_{2}}{c\sin\theta}}}\rho\left(\left|M\right|\right)\left(s+t+\frac{1}{c\sin\theta}y-\frac{b_{2}}{c\sin\theta},-sc\cos\theta+x-\frac{\cos\theta}{\sin\theta}y+\frac{\cos\theta}{\sin\theta}b_{2},-sc\sin\theta+b_{2}\right)ds}}.\label{eq:olole-1-1-1-1-1-2}
\end{multline}

Now we have 

\[
Q_{1}^{\sigma}\left(\left|M\right|\right)=\sigma\left(\left|M_{1}\right|+\left|M_{2}\right|+\left|M_{3}\right|\right)\left|M_{1}\right|+2cS\left|M_{2}\right|\left|M_{3}\right|+\left(\sigma-2cS\right)\left|M_{1}\right|\left|M_{4}\right|
\]
 
\[
Q_{2}^{\sigma}\left(\left|M\right|\right)=\sigma\left(\left|M_{1}\right|+\left|M_{2}\right|+\left|M_{4}\right|\right)\left|M_{2}\right|+2cS\left|M_{1}\right|\left|M_{4}\right|+\left(\sigma-2cS\right)\left|M_{2}\right|\left|M_{3}\right|
\]

\[
Q_{3}^{\sigma}\left(\left|M\right|\right)=\sigma\left(\left|M_{1}\right|+\left|M_{3}\right|+\left|M_{4}\right|\right)\left|M_{3}\right|+2cS\left|M_{1}\right|\left|M_{4}\right|+\left(\sigma-2cS\right)\left|M_{2}\right|\left|M_{3}\right|
\]

\[
Q_{4}^{\sigma}\left(\left|M\right|\right)=\sigma\left(\left|M_{2}\right|+\left|M_{3}\right|+\left|M_{4}\right|\right)\left|M_{4}\right|+2cS\left|M_{2}\right|\left|M_{3}\right|+\left(\sigma-2cS\right)\left|M_{1}\right|\left|M_{4}\right|.
\]

and we conclude that for $\sigma\geq2cS,$ the solution $N=\left(N_{1},N_{2},N_{3},N_{4}\right)$
of $\left(\Sigma_{\sigma,M}\right)$ is non-negative.

Let us consider the operator $\mathcal{T}^{\sigma}:M\longrightarrow N_{M}$
where $N_{M}$ is the unique non-negative solution of the problem
$\left(\Sigma_{\sigma,M}\right)$ for sufficiently large $\sigma.$

We easily verify from the statements of $\left(\Sigma_{\sigma}\right)$
(\ref{eq:ssdffz-1})-(\ref{eq:qqqqsa-1}) and $\left(\Sigma_{\sigma,M}\right)$
(\ref{eq:ssdffz-1-1})-(\ref{eq:qqqqsa-1-1}) that $\mathcal{T}^{\sigma}\left(M\right)=M$
iff $M$ is a solution of $\left(\Sigma_{\sigma}\right)$ for sufficiently
large $\sigma$ i.e. $M$ is a solution of $\Sigma$ as $\left(\Sigma_{\sigma}\right)$
is equivalent to $\Sigma.$ As $\mathcal{T}^{\sigma}\left(M\right)$
is non-negative for sufficiently large $\sigma$ , so is any solution
$M$ of $\Sigma.$ 
\end{proof}

\section{Existence and uniqueness \label{ooolsppikksi}}

\subsection{Auxilliary operator\label{subsec:Fixed-point-problem}}

Let $M=\left(M_{1},M_{2},M_{3},M_{4}\right)$ be fixed . Let us replace
$N$ in the second member of $\Sigma$ (\ref{eq:erter})-(\ref{eq:lsoo})
by $M$. We obtain the linear system $\left(\Sigma_{M}\right)$ defined
by
\begin{align}
{\textstyle \dfrac{\p N_{1}}{\p t}+c\cos\theta\dfrac{\p N_{1}}{\p x}} & {\displaystyle +c\sin\theta\dfrac{\p N_{1}}{\p y}}=Q(M),\;\left(t,x,y\right)\in\mathring{\mathscr{P}}\label{eq:ssdffz-1-1-1}\\
\dfrac{\p N_{2}}{\p t}-c\sin\theta\dfrac{\p N_{2}}{\p x} & +c\cos\theta\dfrac{\p N_{2}}{\p y}=-Q(M),\;\left(t,x,y\right)\in\mathring{\mathscr{P}}\\
\dfrac{\p N_{3}}{\p t}+c\sin\theta\dfrac{\p N_{3}}{\p x} & -c\cos\theta\dfrac{\p N_{3}}{\p y}=-Q(M),\;\left(t,x,y\right)\in\mathring{\mathscr{P}}\\
\dfrac{\p N_{4}}{\p t}-c\cos\theta\dfrac{\p N_{4}}{\p x} & -c\sin\theta\dfrac{\p N_{4}}{\p y}=Q(M),\;\left(t,x,y\right)\in\mathring{\mathscr{P}}\label{eq:qqqqsa-1-1-1}
\end{align}
with the conditions \ref{eq:lsos}-\ref{eq:mqppa}. $\left(\Sigma_{M}\right)$
has an unique continuous solution $\mathcal{T}\left(M\right)=\left(\mathcal{T}_{i}\left(M\right)\right)_{i=1}^{4}$
defined in the appendix (\ref{sec:Solution-of-the}) by expressions
(\ref{eq:oloooso})-(\ref{eq:olole-1-1-1-1-1-2-1})

It is immediate that the solutions of $\Sigma$ are the fixed points
of the operator
\begin{align}
\mathcal{T}:C\left(\begin{array}{r}
\mathscr{P}\end{array};\R^{4}\right) & \longrightarrow C\left(\begin{array}{r}
\mathscr{P}\end{array};\R^{4}\right)\nonumber \\
M & \longmapsto\mathcal{T}\left(M\right)=\left(\mathcal{T}_{i}\left(M\right)\right)_{i=1}^{4}.\label{eq:kozep}
\end{align}

We easily prove as in \cite{key-1}, that 
\begin{lem}
\label{lem:-is-continuous.}$\mathcal{T}$ is continuous.
\end{lem}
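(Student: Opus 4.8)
The plan is to read off continuity directly from the explicit representation of $\mathcal{T}(M)$ given in Appendix~\ref{sec:Solution-of-the}. For each $i\in\{1,2,3,4\}$ and each $(t,x,y)\in\mathscr{P}$, the value $\mathcal{T}_i(M)(t,x,y)$ is a sum of a term built only from the initial and boundary data (hence independent of $M$) and an integral of $\pm Q(M)=\pm 2cS(M_2M_3-M_1M_4)$ along the characteristic of the $i$-th transport operator issuing from $(t,x,y)$ back to the point where it first meets the parabolic boundary $\{t=0\}\cup\{x=a_1\}\cup\{x=b_1\}\cup\{y=a_2\}\cup\{y=b_2\}$. The crucial geometric fact is that in every one of the formulas of Appendix~\ref{sec:Solution-of-the} (the $\sigma=0$ analogues of (\ref{eq:olole-1})--(\ref{eq:olole-1-1-1-1-1-2})) the integration variable ranges over an interval whose length is at most
\[
L:=\max\Bigl\{T,\ \tfrac{b_1-a_1}{c\cos\theta},\ \tfrac{b_1-a_1}{c\sin\theta},\ \tfrac{b_2-a_2}{c\cos\theta},\ \tfrac{b_2-a_2}{c\sin\theta}\Bigr\},
\]
a constant depending only on the dimensions of $\mathscr{P}$, on $c$ and on $\theta$ (this $L$ is, up to a factor, the parameter $p$ of Theorem~\ref{thm:Suppose-.-Then}).

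First I would fix $M,\widetilde M\in C(\mathscr{P};\R^4)$ and subtract the two representations. On each of the three pieces $A$, $B$, $C$ of the piecewise formula for $\mathcal{T}_i$ the data contributions cancel, so what remains is an integral of $Q(M)-Q(\widetilde M)$ over a characteristic segment of length $\le L$; therefore
\[
\bigl|\mathcal{T}_i(M)(t,x,y)-\mathcal{T}_i(\widetilde M)(t,x,y)\bigr|\ \le\ L\,\bigl\|Q(M)-Q(\widetilde M)\bigr\|_{\infty},
\]
uniformly in $i$ and in $(t,x,y)\in\mathscr{P}$. Next I would exploit the bilinearity of $Q$: using the identity $ab-\widetilde a\widetilde b=a(b-\widetilde b)+\widetilde b(a-\widetilde a)$ on each product $M_2M_3$ and $M_1M_4$ gives
\[
\bigl\|Q(M)-Q(\widetilde M)\bigr\|_{\infty}\ \le\ 4cS\bigl(\|M\|+\|\widetilde M\|\bigr)\,\|M-\widetilde M\|,
\]
and hence $\|\mathcal{T}(M)-\mathcal{T}(\widetilde M)\|\le 4cSL\bigl(\|M\|+\|\widetilde M\|\bigr)\|M-\widetilde M\|$. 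Thus $\mathcal{T}$ is Lipschitz on every bounded subset of $C(\mathscr{P};\R^4)$; continuity at a given $M$ then follows by restricting $\widetilde M$ to the ball of radius $\|M\|+1$. This bilinear estimate is exactly the seed of the later contraction argument.

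The only genuinely delicate point is the bookkeeping: one must verify that the above estimate has the same form on each of the regions $A$, $B$, $C$ entering each $\mathcal{T}_i$, that these regions cover $\mathscr{P}$, and that the pieces agree on the separating planes, so that passing to the supremum over $(t,x,y)$ is legitimate — the matching on those planes is precisely the content of the already-quoted fact that $\mathcal{T}(M)\in C(\mathscr{P};\R^4)$, which itself rests on the compatibility conditions (\ref{eq:kqiiq})--(\ref{eq:mqppa}). Everything else is the routine estimation of characteristic integrals, carried out exactly as in \cite{key-1}.
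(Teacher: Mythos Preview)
Your proof is correct and follows essentially the same route as the paper: subtract the explicit characteristic representations of $\mathcal{T}_i(M)$ and $\mathcal{T}_i(\widetilde M)$ so that the data terms cancel, bound the remaining integrals by the interval length times $\|Q(M)-Q(\widetilde M)\|_\infty$, and use bilinearity of $Q$ to get $\|\mathcal{T}(M)-\mathcal{T}(\widetilde M)\|\le 4cSL(\|M\|+\|\widetilde M\|)\|M-\widetilde M\|$. Your constant $4cSL$ is exactly the paper's $p'$ in (\ref{eq:lsoopa-2}), and your remark that this estimate seeds the later contraction is precisely how the paper uses it in the uniqueness argument.
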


\begin{proof}
\ref{eq:oloooso}-\ref{eq:looaoolzo} give for $M,N\in C\left(\mathscr{P},\R^{4}\right):$
\begin{multline}
\left\Vert \mathcal{T}_{1}\left(M\right)-\mathcal{T}_{1}\left(N\right)\right\Vert _{\infty}\leq\\
\max\Biggl\{\sup_{\left(t,x,y\right)\in\mathscr{P}}\Biggl|\int_{0}^{t}\left[Q\left(M\right)-Q\left(N\right)\right]\left(s,x+c\left(s-t\right)\cos\theta,y+c\left(s-t\right)\sin\theta\right)ds\Biggr|;\\
\sup_{\left(t,x,y\right)\in\mathscr{P}}\Biggl|\int_{0}^{\frac{1}{c\cos\theta}x-\frac{a_{1}}{c\cos\theta}}\left[Q\left(M\right)-Q\left(N\right)\right]\\
\left(s+t-\frac{1}{c\cos\theta}x+\frac{a_{1}}{c\cos\theta},sc\cos\theta+a_{1},sc\sin\theta-\frac{\sin\theta}{\cos\theta}x+y+\frac{\sin\theta}{\cos\theta}a_{1}\right)ds\Biggr|;\\
\sup_{\left(t,x,y\right)\in\mathscr{P}}\Biggl|\int_{0}^{\frac{1}{c\sin\theta}y-\frac{a_{2}}{c\sin\theta}}\left[Q\left(M\right)-Q\left(N\right)\right]\\
\left(s+t-\frac{1}{c\sin\theta}y+\frac{a_{2}}{c\sin\theta},sc\cos\theta+x-\frac{\cos\theta}{\sin\theta}y+\frac{\cos\theta}{\sin\theta}a_{2},sc\sin\theta+a_{2}\right)ds\Biggr|\Biggl\}.\label{aoalal-2-1-1-1-1-1-1}
\end{multline}
But 
\begin{multline}
Q\left(M\right)-Q\left(N\right)=2cS\left(M_{2}-N_{2}\right)M_{3}+2cSN_{2}\left(M_{3}-N_{3}\right)\\
-2cS\left(M_{1}-N_{1}\right)M_{4}-2cSN_{1}\left(M_{4}-N_{4}\right),
\end{multline}
hence 
\begin{multline}
\left\Vert Q\left(M\right)-Q\left(N\right)\right\Vert _{\infty}\leq2cS\left\Vert M_{2}-N_{2}\right\Vert _{\infty}\left\Vert M_{3}\right\Vert _{\infty}+2cS\left\Vert N_{2}\right\Vert _{\infty}\left\Vert M_{3}-N_{3}\right\Vert _{\infty}\\
+2cS\left\Vert M_{1}-N_{1}\right\Vert _{\infty}\left\Vert M_{4}\right\Vert _{\infty}+2cS\left\Vert N_{1}\right\Vert _{\infty}\left\Vert M_{4}-N_{4}\right\Vert _{\infty},
\end{multline}
and 
\begin{multline}
\left\Vert Q\left(M\right)-Q\left(N\right)\right\Vert _{\infty}\leq4cS\left\Vert M-N\right\Vert \left\Vert M\right\Vert +4cS\left\Vert N\right\Vert \left\Vert M-N\right\Vert \\
\left\Vert Q\left(M\right)-Q\left(N\right)\right\Vert _{\infty}\leq4cS\left(\left\Vert M\right\Vert +\left\Vert N\right\Vert \right)\left\Vert M-N\right\Vert ;
\end{multline}

hence \ref{aoalal-2-1-1-1-1-1-1} implies 
\begin{multline}
\left\Vert \mathcal{T}_{1}\left(M\right)-\mathcal{T}_{1}\left(N\right)\right\Vert _{\infty}\leq\\
\max\Biggl\{ T,\dfrac{b_{1}-a_{1}}{c\cos\theta},{\displaystyle \frac{b_{2}-a_{2}}{c\sin\theta}}\Biggl\}\cdot4cS\left(\left\Vert M\right\Vert +\left\Vert N\right\Vert \right)\left\Vert M-N\right\Vert .\label{aoalal-2-1-1-1-1-1-1-1-1-2}
\end{multline}
Similar inequalities hold: from \ref{eq:ppmms-2-1}-\ref{eq:opos}
we have
\begin{multline}
\left\Vert \mathcal{T}_{2}\left(M\right)-\mathcal{T}_{2}\left(N\right)\right\Vert _{\infty}\leq\\
\max\Biggl\{ T,\dfrac{b_{1}-a_{1}}{c\sin\theta},{\displaystyle \frac{b_{2}-a_{2}}{c\cos\theta}}\Biggl\}\cdot4cS\left(\left\Vert M\right\Vert +\left\Vert N\right\Vert \right)\left\Vert M-N\right\Vert ;\label{aoalal-2-1-1-1-1-1-1-1-1-1-2}
\end{multline}
from \ref{aoalal-1-1-2}-\ref{eq:loiuij},
\begin{multline}
\left\Vert \mathcal{T}_{3}\left(M\right)-\mathcal{T}_{3}\left(N\right)\right\Vert _{\infty}\leq\\
\max\Biggl\{ T,\dfrac{b_{1}-a_{1}}{c\sin\theta},{\displaystyle \frac{b_{2}-a_{2}}{c\cos\theta}}\Biggl\}\cdot4cS\left(\left\Vert M\right\Vert +\left\Vert N\right\Vert \right)\left\Vert M-N\right\Vert .\label{aoalal-2-1-1-1-1-1-1-1-1-1-1-2}
\end{multline}
and from (\ref{aoalal-1-1-1-2})-(\ref{eq:olole-1-1-1-1-1-2-1}) 
\begin{multline}
\left\Vert \mathcal{T}_{4}\left(M\right)-\mathcal{T}_{4}\left(N\right)\right\Vert _{\infty}\leq\\
\max\Biggl\{ T,\dfrac{b_{1}-a_{1}}{c\cos\theta},{\displaystyle \frac{b_{2}-a_{2}}{c\sin\theta}}\Biggl\}\cdot4cS\left(\left\Vert M\right\Vert +\left\Vert N\right\Vert \right)\left\Vert M-N\right\Vert .\label{aoalal-2-1-1-1-1-1-1-1-1-1-1-1-1}
\end{multline}
 Now from (\ref{aoalal-2-1-1-1-1-1-1-1-1-2}),(\ref{aoalal-2-1-1-1-1-1-1-1-1-1-2}),
(\ref{aoalal-2-1-1-1-1-1-1-1-1-1-1-2}),(\ref{aoalal-2-1-1-1-1-1-1-1-1-1-1-1-1})
and

$\left\Vert \mathcal{T}\left(M\right)-\mathcal{T}\left(N\right)\right\Vert ={\displaystyle \max_{1\leq i\leq4}}\left\Vert \mathcal{T}_{i}\left(M\right)-\mathcal{T}_{i}\left(N\right)\right\Vert _{\infty}$we
have 
\begin{multline}
\left\Vert \mathcal{T}\left(M\right)-\mathcal{T}\left(N\right)\right\Vert \leq\\
\underbrace{\max\Biggl\{ T,\dfrac{b_{1}-a_{1}}{c\cos\theta},{\displaystyle \frac{b_{2}-a_{2}}{c\sin\theta}},\dfrac{b_{1}-a_{1}}{c\sin\theta},{\displaystyle \frac{b_{2}-a_{2}}{c\cos\theta}}\Biggl\}\cdot4cS}_{\equiv p'}\left(\left\Vert M\right\Vert +\left\Vert N\right\Vert \right)\left\Vert M-N\right\Vert .\label{eq:lsoopa-2}
\end{multline}

We deduce that $\mathcal{T}$ is continuous.
\end{proof}
\begin{prop}
\label{prop:Suppose--such}Suppose $M=\left(M_{1},M_{2},M_{3},M_{4}\right)\in C\left(\begin{array}{r}
\mathscr{P}\end{array};\R^{4}\right)$ such that $\dfrac{\partial M_{i}}{\partial t},\dfrac{\partial M_{i}}{\partial x},\dfrac{\partial M_{i}}{\partial y}$
are defined in $\mathring{\mathscr{P}},$ except possibly on a finite
number of planes, and are continuous and bounded forall $i=1,2,3,4.$
Then all the derivatives $\dfrac{\partial\mathcal{T}_{i}\left(M\right)}{\partial t},\dfrac{\partial\mathcal{T}_{i}\left(M\right)}{\partial x},\dfrac{\partial\mathcal{T}_{i}\left(M\right)}{\partial y}$
$\left(i=1,2,3,4\right)$ are defined in $\mathring{\mathscr{P}},$
except possibly on a finite number of planes, and are continuous and
bounded. 

In other words, if we denote by $\mathscr{E}$ the sub-space of $C\left(\begin{array}{r}
\mathscr{P}\end{array};\R\right)$ consisting of functions $u$ that are continuous on $\mathscr{P}$
such that $\dfrac{\partial u}{\partial t},\dfrac{\partial u}{\partial x},\dfrac{\partial u}{\partial y}$
are defined in $\mathring{\mathscr{P}}$ except possibly on a finite
number of planes, and are continuous and bounded, then $\mathcal{T}\left(\mathscr{E}^{4}\right)\subset\mathscr{E}^{4}.$
\end{prop}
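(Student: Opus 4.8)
The plan is to read the derivatives of $\mathcal{T}(M)$ off the explicit characteristic representation of the unique continuous solution of the linear system $\left(\Sigma_{M}\right)$: on each of three regions of $\mathscr{P}$ cut out by planes, each component $\mathcal{T}_{i}(M)$ equals the sum of an integral of $Q(M)$ along the corresponding characteristic and a boundary or initial datum composed with an affine map. The first thing I would record is that $\mathscr{E}$ is closed under sums and products: if $u,v\in\mathscr{E}$ then $\partial(uv)=u\,\partial v+v\,\partial u$ and $\partial(u+v)=\partial u+\partial v$ are bounded on the compact set $\mathscr{P}$ and continuous off the finite union of the exceptional planes of $u$ and of $v$. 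Applying this to $Q(M)=2cS\,(M_{2}M_{3}-M_{1}M_{4})$ gives $Q(M)\in\mathscr{E}$ whenever $M\in\mathscr{E}^{4}$; in particular $Q(M)$ and its three first partials are bounded on $\mathscr{P}$ and continuous off a finite family of planes $P_{1},\dots,P_{k}$.

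The heart of the matter is the following claim: if $h\in\mathscr{E}$, if $\lambda$ is one of the affine functions occurring as an upper limit of integration in the formulas for $\mathcal{T}_{i}(M)$ (namely $t$, $\frac{x-a_{1}}{c\cos\theta}$, $\frac{b_{1}-x}{c\sin\theta}$, $\frac{y-a_{2}}{c\cos\theta}$, $\frac{b_{2}-y}{c\sin\theta}$, and their analogues, all well defined and bounded on $\mathscr{P}$ since $0<\theta<\frac{\pi}{2}$), and if $\Phi_{s}$ is the associated characteristic map (affine in $(t,x,y)$ for each fixed $s$, with $\Phi_{\lambda(t,x,y)}(t,x,y)=(t,x,y)$ by construction), then the map $(t,x,y)\mapsto\int_{0}^{\lambda(t,x,y)}h\bigl(\Phi_{s}(t,x,y)\bigr)\,ds$ lies in $\mathscr{E}$. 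The expression is continuous; and at every $(t,x,y)$ lying outside the planes $P_{1},\dots,P_{k}$ and outside their preimages $\Phi_{0}^{-1}(P_{1}),\dots,\Phi_{0}^{-1}(P_{k})$ one may differentiate under the integral sign and apply the chain rule to obtain, for $\partial_{\bullet}\in\{\partial_{t},\partial_{x},\partial_{y}\}$,
\[
\partial_{\bullet}\int_{0}^{\lambda}h\bigl(\Phi_{s}\bigr)\,ds=h(t,x,y)\,\partial_{\bullet}\lambda+\int_{0}^{\lambda}(\partial h)\bigl(\Phi_{s}(t,x,y)\bigr)\,\partial_{\bullet}\Phi_{s}\,ds ;
\]
differentiation under the integral is legitimate there because, a line meeting a hyperplane in the empty set, a point, or the whole line, the characteristic crosses each $P_{j}$ at most once and not at an endpoint of $[0,\lambda]$, so the integrand is continuous in $(t,x,y)$ away from a continuously moving jump. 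The right-hand side is bounded on $\mathscr{P}$ (the first term because $h$ is bounded and $\partial_{\bullet}\lambda$ is constant, the second because $\partial h$ is bounded, $\partial_{\bullet}\Phi_{s}$ is constant, and the length of integration is at most $\max\{T,\frac{b_{1}-a_{1}}{c\cos\theta},\frac{b_{2}-a_{2}}{c\sin\theta},\frac{b_{1}-a_{1}}{c\sin\theta},\frac{b_{2}-a_{2}}{c\cos\theta}\}$) and continuous off the finite family of planes just described. Finally, the datum terms $N_{i}^{0},N_{i}^{-},N_{i}^{--},N_{i}^{+},N_{i}^{++}$ composed with their affine arguments are globally $C^{1}$ with bounded derivatives, hence in $\mathscr{E}$, because the data are assumed $C^{1}$ with bounded derivatives.

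It remains to assemble the pieces. By the previous paragraph each of the three pieces of each $\mathcal{T}_{i}(M)$ lies in $\mathscr{E}$; on the interior of each characteristic region $\mathcal{T}_{i}(M)$ coincides with the corresponding piece, and the three regions are separated by finitely many planes, such as $x-ct\cos\theta=a_{1}$ and $x\sin\theta-y\cos\theta=a_{1}\sin\theta-a_{2}\cos\theta$. Since $\mathcal{T}_{i}(M)$ is continuous on all of $\mathscr{P}$, its partials $\partial_{t}\mathcal{T}_{i}(M),\partial_{x}\mathcal{T}_{i}(M),\partial_{y}\mathcal{T}_{i}(M)$ are defined, bounded and continuous on $\mathring{\mathscr{P}}$ off the finite union of all the region-interface planes and all the exceptional planes produced above. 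Hence $\mathcal{T}_{i}(M)\in\mathscr{E}$ for $i=1,2,3,4$, that is $\mathcal{T}(\mathscr{E}^{4})\subset\mathscr{E}^{4}$.

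I expect the main obstacle to be the bookkeeping in the second step: verifying that the Leibniz boundary term is exactly $h(t,x,y)\,\partial_{\bullet}\lambda$ (which rests on the identity $\Phi_{\lambda}=\mathrm{id}$, valid because at $s=\lambda$ one is back at the point $(t,x,y)$, whether the characteristic issues from the initial plane or from a lateral face), that differentiation under the integral sign is justified off only finitely many planes despite the moving discontinuity of the integrand, and that these operations create no infinite family of exceptional planes --- the last point being clear since every map involved is affine and an affine preimage of a plane is again a plane (or empty, or the whole space). The remaining verifications are routine.
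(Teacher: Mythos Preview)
Your proposal is correct and follows essentially the same approach as the paper: both rely on the explicit characteristic representation of $\mathcal{T}_{i}(M)$ and differentiate it piece by piece. The paper's proof consists of the single sentence ``this follows from the explicit formula of the derivatives of $\mathcal{T}_{i}(M)$ given in the appendix,'' where the appendix simply lists all the partial derivatives of every piece $\mathcal{T}_{i}^{A},\mathcal{T}_{i}^{B},\mathcal{T}_{i}^{C}$ by direct computation; you instead abstract the common structure into the lemma that $\int_{0}^{\lambda}h(\Phi_{s})\,ds\in\mathscr{E}$ whenever $h\in\mathscr{E}$, which is more economical and makes the mechanism (Leibniz boundary term via $\Phi_{\lambda}=\mathrm{id}$, cancellation of the moving interior crossing because $h$ is continuous, affine preimages of planes being planes) explicit rather than implicit in forty-odd displayed formulas.
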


\begin{proof}
This follows from the explicit formula of the derivatives of $\mathcal{T}_{i}\left(M\right)$
$\left(i=1,2,3,4\right)$ given in the appendix (\ref{sec:Derivatives})
\end{proof}
For $N=\left(N_{i}\right)_{i=1}^{4}\in\mathscr{E}^{4},$ let $\mathscr{V}\left(N\right)\equiv\max\left\{ \left\Vert N\right\Vert ,\left\Vert \dfrac{\partial N}{\partial t}\right\Vert ,\left\Vert \dfrac{\partial N}{\partial x}\right\Vert ,\left\Vert \dfrac{\partial N}{\partial y}\right\Vert \right\} $
and consider for $R>0,$ $\mathscr{B}_{R}\equiv\left\{ N\in\mathscr{E}^{4}:\mathscr{V}\left(N\right)\leq R\right\} .$

\begin{prop}
\label{prop:The-sets-}The sets $\mathscr{B}_{R},\left(R>0\right)$
are non-empty convex subsets of $C\left(\begin{array}{r}
\mathscr{P}\end{array};\R^{4}\right)$ and are relatively compact in $\left(C\left(\begin{array}{r}
\mathscr{P}\end{array};\R^{4}\right),\left\Vert \cdot\right\Vert \right).$ 
\end{prop}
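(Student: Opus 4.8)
The plan is to verify the three asserted properties in turn: non-emptiness and convexity are elementary, while relative compactness will come from the Arzelà--Ascoli theorem once a uniform Lipschitz bound on $\mathscr{B}_R$ has been secured.

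First, the constant map $0$ belongs to $\mathscr{E}^4$ with $\mathscr{V}(0)=0\le R$, so $0\in\mathscr{B}_R$ and $\mathscr{B}_R\neq\emptyset$; moreover $\mathscr{B}_R\subset\mathscr{E}^4\subset C(\mathscr{P};\R^4)$ by definition of $\mathscr{E}$. For convexity I would first note that $\mathscr{E}$ is a linear subspace of $C(\mathscr{P};\R)$: if $u,v\in\mathscr{E}$ and $\lambda,\mu\in\R$, then $\lambda u+\mu v$ is continuous on $\mathscr{P}$, and by linearity of differentiation its derivatives $\partial_t,\partial_x,\partial_y$ are defined, continuous and bounded off the union of the two exceptional finite families of planes, which is again a finite family of planes; hence $\mathscr{E}^4$ is a linear subspace of $C(\mathscr{P};\R^4)$. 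Since $\mathscr{V}$ is the maximum of the norm $\|\cdot\|$ and three seminorms (each being $\|\cdot\|$ precomposed with a linear differential operator), it is convex and positively homogeneous, so for $N,N'\in\mathscr{B}_R$ and $\lambda\in[0,1]$ we have $\lambda N+(1-\lambda)N'\in\mathscr{E}^4$ and
\[
\mathscr{V}\bigl(\lambda N+(1-\lambda)N'\bigr)\le\lambda\,\mathscr{V}(N)+(1-\lambda)\,\mathscr{V}(N')\le R ,
\]
so $\mathscr{B}_R$ is convex.

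For relative compactness, recall that $\mathscr{P}=[0;T]\times[a_1;b_1]\times[a_2;b_2]$ is compact, so by the Arzelà--Ascoli theorem it suffices to show that $\mathscr{B}_R$ is uniformly bounded and equicontinuous in $(C(\mathscr{P};\R^4),\|\cdot\|)$. Uniform boundedness is immediate from $\|N\|\le\mathscr{V}(N)\le R$ for all $N\in\mathscr{B}_R$. For equicontinuity I would prove that every $u\in\mathscr{E}$ with $\|\partial u/\partial t\|_\infty,\|\partial u/\partial x\|_\infty,\|\partial u/\partial y\|_\infty\le R$ is $3R$-Lipschitz on $\mathscr{P}$ for the $\ell^\infty$ distance; applying this to each component of each $N\in\mathscr{B}_R$ produces a single Lipschitz constant valid for the whole family, hence equicontinuity. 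To get the Lipschitz estimate, let $H$ be the finite union of planes off which $u$ is $C^1$ and fix $P,Q\in\mathscr{P}$. If $[P,Q]$ is contained in no plane of $H$, then, writing $\gamma(s)=(1-s)P+sQ$, the set $\{s\in[0,1]:\gamma(s)\in H\}$ is finite, $s\mapsto u(\gamma(s))$ is continuous on $[0,1]$ and $C^1$ off this finite set with $|(u\circ\gamma)'(s)|=|\nabla u(\gamma(s))\cdot(Q-P)|\le 3R\,\|Q-P\|_\infty$, so applying the mean value theorem on each of the finitely many subintervals and summing gives $|u(P)-u(Q)|\le 3R\,\|Q-P\|_\infty$. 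If instead $[P,Q]$ lies in some plane of $H$, choose $Q_n\to Q$ in $\mathscr{P}$ with $[P,Q_n]$ contained in no plane of $H$ — this is possible because $\{Q'\in\mathscr{P}:[P,Q']\subset\pi\text{ for some }\pi\in H\}$ equals the intersection of $\mathscr{P}$ with the finitely many planes of $H$ that pass through $P$, hence is a closed subset of $\mathscr{P}$ with empty interior, so its complement is dense — apply the previous case to each pair $(P,Q_n)$ and let $n\to\infty$ using the continuity of $u$. This yields the uniform Lipschitz bound on $\mathscr{B}_R$, hence equicontinuity, and the proposition follows.

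The one genuinely delicate step is this last Lipschitz estimate: elements of $\mathscr{E}$ have derivatives defined only off a finite family of planes, so one cannot naively integrate the gradient along an arbitrary segment of the convex domain $\mathscr{P}$; the obstruction is exactly the case where the chosen segment lies inside an exceptional plane, and it is removed by the density/perturbation argument combined with the continuity of $u$, rather than by any bound on the derivatives themselves. Everything else — non-emptiness, convexity, uniform boundedness, and the invocation of Arzelà--Ascoli — is routine.
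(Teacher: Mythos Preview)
Your proof is correct and follows essentially the same route as the paper: non-emptiness via the zero function, convexity from the subadditivity of $\mathscr{V}$, and relative compactness via Arzel\`a--Ascoli after deriving a uniform Lipschitz bound from the derivative bounds. Your treatment of the one delicate point --- the extension of the mean-value estimate across the exceptional planes --- is actually more careful than the paper's, which simply asserts that continuity allows the extension; your explicit perturbation-and-density argument for segments lying inside an exceptional plane makes this step rigorous.
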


\begin{proof}
$\mathscr{B}_{R}$ contains the zero function. $\forall M,N\in\mathscr{B}_{R},$
and $\forall\lambda\in\R$ such that $0\leq\lambda\leq1$ we have
$\lambda M+\left(1-\lambda\right)N\in\mathscr{E}^{4};$ $\mathscr{V}\left(\lambda M+\left(1-\lambda\right)N\right)\leq R$
is immediate. $\mathscr{B}_{R}$ is thus convex.

By definition, $\mathscr{B}_{R}$ is a bounded set in $\left(C\left(\begin{array}{r}
\mathscr{P}\end{array};\R^{4}\right),\left\Vert \cdot\right\Vert \right)$ and the real functions $\dfrac{\partial M_{i}}{\partial t},\dfrac{\partial M_{i}}{\partial x},\dfrac{\partial M_{i}}{\partial y},\left(i=1,2,3,4\right)$
are uniformly bounded and continuous on their domain for $M\in\mathscr{B}_{R}.$
Thus there exists a constant $k^{i}>0$ independant of $M$ such that
$\forall M\in\mathcal{\mathscr{B}}_{R},\forall\left(t,x,y\right)$
in the domain of $\left(\dfrac{\partial M_{i}}{\partial t},\dfrac{\partial M_{i}}{\partial x},\dfrac{\partial M_{i}}{\partial y}\right),$
it holds
\begin{equation}
\left\Vert d\left(M_{i}\right)\left(t,x,y\right)\right\Vert _{\mathscr{L}\left(\R^{3},\R\right)}\leq k^{i}.\label{eq:oloozor}
\end{equation}

Let $M\in\mathcal{\mathscr{B}}_{R}$ and $\left(t,x,y\right),\left(t',x',y'\right)\in\mathscr{P}$
such that $\left(\dfrac{\partial M}{\partial t},\dfrac{\partial M}{\partial x},\dfrac{\partial M}{\partial y}\right)$
is defined on the segment

$\left[\left(t,x,y\right),\left(t',x',y'\right)\right]\equiv\left\{ \left(t,x,y\right)+\alpha\left(t'-t,x'-x,y'-y\right)\vert\left(0\leq\alpha\leq1\right)\right\} $.
As $\mathscr{P}$ is a convex set, then by the mean value inequality,
inequality (\ref{eq:oloozor}) implies that forall $i=1,2,3,4,$ 
\begin{align}
\left|M_{i}\left(t,x,y\right)-M_{i}\left(t',x',y'\right)\right| & \leq k^{i}\left\Vert \left(t,x,y\right)-\left(t',x',y'\right)\right\Vert _{\R^{3}}.\label{eq:olopposn}
\end{align}

For any $\varepsilon>0$, let ${\displaystyle \alpha_{\varepsilon}\equiv\min_{1\leq i\leq4}\dfrac{\varepsilon}{k^{i}}}.$
It follows from (\ref{eq:olopposn}) that 

\begin{multline}
\left\Vert \left(t,x,y\right)-\left(t',x',y'\right)\right\Vert _{\R^{3}}<\alpha_{\varepsilon}\implies\\
\left\Vert M\left(t,x,y\right)-M\left(t',x',y'\right)\right\Vert _{\R^{4}}\leq\varepsilon\label{eq:ayueacw-1}
\end{multline}

The points where $\left(\dfrac{\partial M}{\partial t},\dfrac{\partial M}{\partial x},\dfrac{\partial M}{\partial y}\right)$
is not defined are in planes. Therefore as $M$ is continuous on $\mathscr{P,}$
we can extend (\ref{eq:ayueacw-1}) to $\mathscr{P}$. We have 
\begin{multline}
\forall\varepsilon>0,\exists\alpha_{\varepsilon}>0,\forall M\in\mathscr{B}_{R},\forall\left(t,x,y\right),\left(t',x',y'\right)\in\mathscr{P}:\\
\left\Vert \left(t,x,y\right)-\left(t',x',y'\right)\right\Vert <\alpha_{\varepsilon}\implies\\
\left\Vert M\left(t,x,y\right)-M\left(t',x',y'\right)\right\Vert _{\R^{4}}\leq\varepsilon
\end{multline}

which means that $\mathscr{B}_{R}$ is equicontinuous in $\left(C\left(\begin{array}{r}
\mathscr{P}\end{array};\R^{4}\right),\left\Vert \cdot\right\Vert \right).$We conclude by Arzelà-Ascoli theorem that $\mathscr{B}_{R}$ is relatively
compact in

$\left(C\left(\begin{array}{r}
\mathscr{P}\end{array};\R^{4}\right),\left\Vert \cdot\right\Vert \right).$ 
\end{proof}
\begin{prop}
\label{prop::opps}There exists $p,q>0$ such that 

\begin{equation}
\forall R>0,\mathcal{T}\left(\mathscr{B}_{R}\right)\subset\mathscr{B}_{pR^{2}+q}.
\end{equation}
\end{prop}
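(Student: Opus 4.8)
The plan is to estimate $\mathscr{V}(\mathcal{T}(M))$ for $M\in\mathscr{B}_R$ directly from the explicit representations of $\mathcal{T}(M)$ (appendix \ref{sec:Solution-of-the}) and of its first derivatives (appendix \ref{sec:Derivatives}), reducing everything to two elementary bounds on the collision term. Since $Q(M)=2cS(M_2M_3-M_1M_4)$, for $M\in\mathscr{B}_R$ one has $\|Q(M)\|_\infty\le 4cS\|M\|^2\le 4cSR^2$; and, differentiating, $dQ(M)=2cS\big((dM_2)M_3+M_2(dM_3)-(dM_1)M_4-M_1(dM_4)\big)$, so that each of $\|\partial_tQ(M)\|_\infty,\|\partial_xQ(M)\|_\infty,\|\partial_yQ(M)\|_\infty$ is at most $8cS\|M\|\,\mathscr{V}(M)\le 8cSR^2$. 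Thus both $Q(M)$ and its first derivatives are $O(R^2)$ uniformly on $\mathscr{B}_R$, and no term linear in $R$ can arise from the collision term because $Q$ is purely quadratic; the constant terms will come only from the data.

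Next I would bound $\|\mathcal{T}_i(M)\|_\infty$. On each of the three sub-regions in its formula, $\mathcal{T}_i(M)$ is the sum of an initial/boundary datum evaluated at the foot of the corresponding characteristic and of an integral of $\pm Q(M)$ along that characteristic, over a parameter interval of length at most $L:=\max\{T,\tfrac{b_1-a_1}{c\cos\theta},\tfrac{b_2-a_2}{c\cos\theta},\tfrac{b_1-a_1}{c\sin\theta},\tfrac{b_2-a_2}{c\sin\theta}\}$, a quantity depending only on the dimensions of $\mathscr{P}$ and on $c,\theta$. Hence $\|\mathcal{T}_i(M)\|_\infty\le D_0+4cSL\,R^2$, with $D_0$ the maximum of the sup-norms of the data $N^0_i,N^-_1,N^{--}_1,N^+_2,N^{--}_2,N^-_3,N^{++}_3,N^+_4,N^{++}_4$.

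Then comes the derivative estimate, which is where the bookkeeping lies. Using the explicit formulas for $\partial_t\mathcal{T}_i(M),\partial_x\mathcal{T}_i(M),\partial_y\mathcal{T}_i(M)$, each such derivative is, region by region, a sum of three kinds of terms: (i) a first derivative of the data evaluated at the foot of a characteristic, times a coefficient that is a fixed function of $c,\theta$, bounded by a constant $D_1$ built from the $C^1$-norms of the data and from $c,\theta$; (ii) a boundary term $\pm Q(M)$ evaluated where the characteristic meets the relevant face, times a derivative of the affine upper limit of integration, which is one of $0,1,\tfrac1{c\cos\theta},\tfrac1{c\sin\theta}$, hence bounded by $\max\{1,\tfrac1{c\cos\theta},\tfrac1{c\sin\theta}\}\cdot 4cSR^2$; and (iii) an interior term $\int\partial_\bullet\big[Q(M)\circ\gamma\big]\,ds$ which, by the chain rule along the affine characteristic $\gamma$, is a linear combination of $\partial_tQ(M),\partial_xQ(M),\partial_yQ(M)$ with coefficients of modulus at most $\max\{1,c,\tan\theta,\tfrac1{c\cos\theta},\tfrac1{c\sin\theta}\}$, integrated over an interval of length $\le L$, hence bounded by a constant multiple of $L\cdot 8cSR^2$. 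Collecting (i)--(iii) gives $\|\partial_\bullet\mathcal{T}_i(M)\|_\infty\le D_1+C\,R^2$ for $\bullet\in\{t,x,y\}$ and $i=1,\dots,4$, where $C$ depends only on $c,S,\theta,T,b_1-a_1,b_2-a_2$.

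Finally I would set $q:=\max(D_0,D_1)$, which depends only on the data, and $p$ equal to the largest of the $R^2$-coefficients appearing in the sup-norm and derivative bounds above, which depends only on the dimensions of $\mathscr{P}$ (and on the fixed model constants $c,S,\theta$). By Proposition \ref{prop:Suppose--such}, $\mathcal{T}(M)\in\mathscr{E}^4$ since $\mathscr{B}_R\subset\mathscr{E}^4$, and the four families of bounds just obtained yield $\mathscr{V}(\mathcal{T}(M))\le pR^2+q$, i.e. $\mathcal{T}(M)\in\mathscr{B}_{pR^2+q}$, for every $M\in\mathscr{B}_R$ and every $R>0$. The only genuine difficulty is organisational: one must differentiate the dozen or so multi-line characteristic integrals of the appendix, track the variable limits of integration and the chain-rule coefficients, and verify that every factor multiplying $Q(M)$ or $dQ(M)$ is dominated by a constant built from $c,S,\theta$ and the side-lengths of $\mathscr{P}$; there is no analytic subtlety beyond the two quadratic bounds on the collision term established at the outset.
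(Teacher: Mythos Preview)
Your proposal is correct and follows essentially the same approach as the paper: bound $\|Q(M)\|_\infty$ and $\|\partial_\bullet Q(M)\|_\infty$ quadratically in $\mathscr{V}(M)$, then read off uniform bounds on $\mathcal{T}_i(M)$ and its first derivatives from the explicit characteristic formulas and their differentiated versions in the appendices, collecting the data-dependent constants into $q$ and the domain/model constants multiplying $R^2$ into $p$. The paper carries out the same bookkeeping with explicit inequalities for each $\|\mathcal{T}_i(M)\|_\infty$ and each $\|\partial_\bullet\mathcal{T}_i(M)\|_\infty$ before taking maxima, whereas you describe the three kinds of terms (data, boundary, interior) more schematically, but the substance and the resulting $p,q$ are the same.
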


\begin{proof}
By proposition (\ref{prop:Suppose--such}), if $M\in\mathscr{B}_{R}\subset\mathscr{E}^{4}$
then $\mathcal{T}\left(M\right)\in\mathscr{E}^{4}.$ For any $M\in\mathscr{E}^{4},$
$Q\left(M\right)={\displaystyle 2cS\left(M_{2}M_{3}-M_{1}M_{4}\right)}$
implies 
\begin{equation}
\left\Vert Q\left(M\right)\right\Vert _{\infty}\leq4cS\left(\mathscr{V}\left(M\right)\right)^{2}.\label{eq:kooll}
\end{equation}
We then have from the expressions \ref{eq:oloooso}-\ref{eq:olole-1-1-1-1-1-2-1}
of the solution of the system $\left(\Sigma_{M}\right)$, the following
inequalities for $M\in\mathscr{E}^{4}:$ 
\begin{multline}
\left\Vert \mathcal{T}_{1}\left(M\right)\right\Vert _{\infty}\leq4cS\cdot\max\Biggl\{ T,\dfrac{b_{1}-a_{1}}{c\cos\theta},{\displaystyle \frac{b_{2}-a_{2}}{c\sin\theta}}\Biggl\}\left(\mathscr{V}\left(M\right)\right)^{2}+\\
\max\Biggl\{\left\Vert N_{1}^{0}\right\Vert _{1},\left\Vert N_{1}^{-}\right\Vert _{1},\left\Vert N_{1}^{--}\right\Vert _{1}\Biggr\}.\label{eq:ksoloa}
\end{multline}

\begin{multline}
\left\Vert \mathcal{T}_{2}\left(M\right)\right\Vert _{\infty}\leq4cS\cdot\max\Biggl\{ T,\dfrac{b_{1}-a_{1}}{c\sin\theta},{\displaystyle \frac{b_{2}-a_{2}}{c\cos\theta}}\Biggl\}\left(\mathscr{V}\left(M\right)\right)^{2}+\\
\max\Biggl\{\left\Vert N_{2}^{0}\right\Vert _{1},\left\Vert N_{2}^{+}\right\Vert _{1},\left\Vert N_{2}^{--}\right\Vert _{1}\Biggr\}\label{eq:ksoloa-1}
\end{multline}

\begin{multline}
\left\Vert \mathcal{T}_{3}\left(M\right)\right\Vert _{\infty}\leq4cS\cdot\max\Biggl\{ T,\dfrac{b_{1}-a_{1}}{c\sin\theta},{\displaystyle \frac{b_{2}-a_{2}}{c\cos\theta}}\Biggl\}\left(\mathscr{V}\left(M\right)\right)^{2}+\\
\max\Biggl\{\left\Vert N_{3}^{0}\right\Vert _{1},\left\Vert N_{3}^{-}\right\Vert _{1},\left\Vert N_{3}^{++}\right\Vert _{1}\Biggr\}\label{eq:ksoloa-1-1}
\end{multline}

\begin{multline}
\left\Vert \mathcal{T}_{4}\left(M\right)\right\Vert _{\infty}\leq4cS\cdot\max\Biggl\{ T,\dfrac{b_{1}-a_{1}}{c\cos\theta},{\displaystyle \frac{b_{2}-a_{2}}{c\sin\theta}}\Biggl\}\left(\mathscr{V}\left(M\right)\right)^{2}+\\
\max\Biggl\{\left\Vert N_{4}^{0}\right\Vert _{1},\left\Vert N_{4}^{+}\right\Vert _{1},\left\Vert N_{4}^{++}\right\Vert _{1}\Biggr\}.\label{eq:ksoloa-1-1-1}
\end{multline}

and 
\begin{multline}
\left\Vert \mathcal{T}\left(M\right)\right\Vert \leq4cS\cdot\max\Biggl\{ T,\dfrac{b_{1}-a_{1}}{c\cos\theta},{\displaystyle \frac{b_{2}-a_{2}}{c\sin\theta}},\dfrac{b_{1}-a_{1}}{c\sin\theta},{\displaystyle \frac{b_{2}-a_{2}}{c\cos\theta}}\Biggl\}\left(\mathscr{V}\left(M\right)\right)^{2}+\\
\max_{1\leq i\leq4}\Biggl\{\left\Vert N_{i}^{0}\right\Vert _{1},\left\Vert N_{1}^{-}\right\Vert _{1},\left\Vert N_{1}^{--}\right\Vert _{1},\left\Vert N_{2}^{+}\right\Vert _{1},\left\Vert N_{2}^{--}\right\Vert _{1},\\
\left\Vert N_{3}^{-}\right\Vert _{1},\left\Vert N_{3}^{++}\right\Vert _{1},\left\Vert N_{4}^{+}\right\Vert _{1},\left\Vert N_{4}^{++}\right\Vert _{1}\Biggr\}.\label{eq:lospp}
\end{multline}

We have ${\displaystyle \dfrac{\partial Q\left(M\right)}{\partial t}=2cS\left(\dfrac{\partial M_{2}}{\partial t}M_{3}+M_{2}\dfrac{\partial M_{3}}{\partial t}-\dfrac{\partial M_{1}}{\partial t}M_{4}-M_{1}\dfrac{\partial M_{4}}{\partial t}\right)}$
and similar expressions for $\dfrac{\partial Q\left(M\right)}{\partial x},\dfrac{\partial Q\left(M\right)}{\partial y}.$
Therefore 

\begin{multline}
\left\Vert \dfrac{\partial Q\left(M\right)}{\partial t}\right\Vert _{\infty},\left\Vert \dfrac{\partial Q\left(M\right)}{\partial x}\right\Vert _{\infty},\left\Vert \dfrac{\partial Q\left(M\right)}{\partial y}\right\Vert _{\infty}\leq\\
2cS\cdot4\left(\mathscr{V}\left(M\right)\right)^{2}\leq8cS\left(\mathscr{V}\left(M\right)\right)^{2}.\label{eq:lopoi}
\end{multline}

(\ref{eq:lopoi}) with explicit formula of the derivatives

$\dfrac{\partial\mathcal{T}_{i}\left(M\right)}{\partial t},\dfrac{\partial\mathcal{T}_{i}\left(M\right)}{\partial x},\dfrac{\partial\mathcal{T}_{i}\left(M\right)}{\partial y},\left(i=1,2,3,4\right)$
in the appendix (\ref{sec:Derivatives}) imply:
\begin{multline}
\left\Vert \dfrac{\partial\mathcal{T}_{1}\left(M\right)}{\partial t}\right\Vert _{\infty}\leq4cS\max\left\{ 1+2T\left(c\cos\theta+c\sin\theta\right);2\dfrac{b_{1}-a_{1}}{c\cos\theta};2{\displaystyle \frac{b_{2}-a_{2}}{c\sin\theta}}\right\} \left(\mathscr{V}\left(M\right)\right)^{2}\\
\max\Biggl\{\left(c\cos\theta+c\sin\theta\right)\left\Vert N_{1}^{0}\right\Vert _{1};\left\Vert N_{1}^{-}\right\Vert _{1};\left\Vert N_{1}^{--}\right\Vert _{1}\Biggr\};\label{eq:lopiok}
\end{multline}
\begin{multline}
\left\Vert \dfrac{\partial\mathcal{T}_{2}\left(M\right)}{\partial t}\right\Vert _{\infty}\leq4cS\max\left\{ 1+2T\left(c\cos\theta+c\sin\theta\right);2\dfrac{b_{1}-a_{1}}{c\sin\theta};2{\displaystyle \frac{b_{2}-a_{2}}{c\cos\theta}}\right\} \left(\mathscr{V}\left(M\right)\right)^{2}\\
\max\Biggl\{\left(c\cos\theta+c\sin\theta\right)\left\Vert N_{2}^{0}\right\Vert _{1};\left\Vert N_{2}^{+}\right\Vert _{1};\left\Vert N_{2}^{--}\right\Vert _{1}\Biggr\};\label{eq:sloos}
\end{multline}

\begin{multline}
\left\Vert \dfrac{\partial\mathcal{T}_{3}\left(M\right)}{\partial t}\right\Vert _{\infty}\leq4cS\max\left\{ 1+2T\left(c\cos\theta+c\sin\theta\right);2\dfrac{b_{1}-a_{1}}{c\sin\theta};2{\displaystyle \frac{b_{2}-a_{2}}{c\cos\theta}}\right\} \left(\mathscr{V}\left(M\right)\right)^{2}\\
\max\Biggl\{\left(c\cos\theta+c\sin\theta\right)\left\Vert N_{3}^{0}\right\Vert _{1};\left\Vert N_{3}^{-}\right\Vert _{1};\left\Vert N_{3}^{++}\right\Vert _{1}\Biggr\};\label{eq:qmpql}
\end{multline}

\begin{multline}
\left\Vert \dfrac{\partial\mathcal{T}_{4}\left(M\right)}{\partial t}\right\Vert _{\infty}\leq4cS\max\left\{ 1+2T\left(c\cos\theta+c\sin\theta\right);2\dfrac{b_{1}-a_{1}}{c\cos\theta};2{\displaystyle \frac{b_{2}-a_{2}}{c\sin\theta}}\right\} \left(\mathscr{V}\left(M\right)\right)^{2}\\
\max\Biggl\{\left(c\cos\theta+c\sin\theta\right)\left\Vert N_{4}^{0}\right\Vert _{1};\left\Vert N_{4}^{+}\right\Vert _{1};\left\Vert N_{4}^{++}\right\Vert _{1}\Biggr\};\label{eq:qmpql-1}
\end{multline}
\begin{multline}
\left\Vert \dfrac{\partial\mathcal{T}_{1}\left(M\right)}{\partial x}\right\Vert _{\infty}\leq\\
4cS\max\left\{ 2T;\dfrac{1}{c\cos\theta}+2\dfrac{b_{1}-a_{1}}{c\cos\theta}\left(\dfrac{1}{c\cos\theta}+\dfrac{\sin\theta}{\cos\theta}\right);2{\displaystyle \frac{b_{2}-a_{2}}{c\sin\theta}}\right\} \left(\mathscr{V}\left(M\right)\right)^{2}\\
\max\Biggl\{\left\Vert N_{1}^{0}\right\Vert _{1};\left(\dfrac{1}{c\cos\theta}+\dfrac{\sin\theta}{\cos\theta}\right)\left\Vert N_{1}^{-}\right\Vert _{1};\left\Vert N_{1}^{--}\right\Vert _{1}\Biggr\};\label{eq:qmpql-1-1}
\end{multline}
\begin{multline}
\left\Vert \dfrac{\partial\mathcal{T}_{2}\left(M\right)}{\partial x}\right\Vert _{\infty}\leq\\
4cS\max\left\{ 2T;\dfrac{1}{c\sin\theta}+2\dfrac{b_{1}-a_{1}}{c\sin\theta}\left(\dfrac{1}{c\sin\theta}+\dfrac{\cos\theta}{\sin\theta}\right);2{\displaystyle \frac{b_{2}-a_{2}}{c\cos\theta}}\right\} \left(\mathscr{V}\left(M\right)\right)^{2}\\
\max\Biggl\{\left\Vert N_{2}^{0}\right\Vert _{1};\left(\dfrac{1}{c\sin\theta}+\dfrac{\cos\theta}{\sin\theta}\right)\left\Vert N_{2}^{+}\right\Vert _{1};\left\Vert N_{2}^{--}\right\Vert _{1}\Biggr\};\label{eq:qmpql-1-1-1}
\end{multline}
\begin{multline}
\left\Vert \dfrac{\partial\mathcal{T}_{3}\left(M\right)}{\partial x}\right\Vert _{\infty}\leq\\
4cS\max\left\{ 2T;\dfrac{1}{c\sin\theta}+2\dfrac{b_{1}-a_{1}}{c\sin\theta}\left(\dfrac{1}{c\sin\theta}+\dfrac{\cos\theta}{\sin\theta}\right);2{\displaystyle \frac{b_{2}-a_{2}}{c\cos\theta}}\right\} \left(\mathscr{V}\left(M\right)\right)^{2}\\
\max\Biggl\{\left\Vert N_{3}^{0}\right\Vert _{1};\left(\dfrac{1}{c\sin\theta}+\dfrac{\cos\theta}{\sin\theta}\right)\left\Vert N_{3}^{-}\right\Vert _{1};\left\Vert N_{3}^{++}\right\Vert _{1}\Biggr\};\label{eq:qmpql-1-1-1-1}
\end{multline}

\begin{multline}
\left\Vert \dfrac{\partial\mathcal{T}_{4}\left(M\right)}{\partial x}\right\Vert _{\infty}\leq\\
4cS\max\left\{ 2T;\dfrac{1}{c\cos\theta}+2\dfrac{b_{1}-a_{1}}{c\cos\theta}\left(\dfrac{1}{c\cos\theta}+\dfrac{\sin\theta}{\cos\theta}\right);2{\displaystyle \frac{b_{2}-a_{2}}{c\sin\theta}}\right\} \left(\mathscr{V}\left(M\right)\right)^{2}\\
\max\Biggl\{\left\Vert N_{4}^{0}\right\Vert _{1};\left(\dfrac{1}{c\cos\theta}+\dfrac{\sin\theta}{\cos\theta}\right)\left\Vert N_{4}^{+}\right\Vert _{1};\left\Vert N_{4}^{++}\right\Vert _{1}\Biggr\}.\label{eq:koalla}
\end{multline}
\ref{eq:lopiok}-\ref{eq:qmpql-1} imply
\begin{multline}
\left\Vert \dfrac{\partial\mathcal{T}\left(M\right)}{\partial t}\right\Vert \leq\\
4cS\max\biggl\{1+2T\left(c\cos\theta+c\sin\theta\right);2\max\left\{ \dfrac{1}{c\cos\theta};\frac{1}{c\sin\theta}\right\} \left(b_{1}-a_{1}\right);\\
2\max\left\{ \dfrac{1}{c\cos\theta};\frac{1}{c\sin\theta}\right\} \left(b_{2}-a_{2}\right)\biggr\}\left(\mathscr{V}\left(M\right)\right)^{2}+\\
\max_{1\leq i\leq4}\Biggl\{\left(c\cos\theta+c\sin\theta\right)\left\Vert N_{i}^{0}\right\Vert _{1},\left\Vert N_{1}^{-}\right\Vert _{1},\left\Vert N_{1}^{--}\right\Vert _{1},\\
\left\Vert N_{2}^{+}\right\Vert _{1},\left\Vert N_{2}^{--}\right\Vert _{1},\left\Vert N_{3}^{-}\right\Vert _{1},\left\Vert N_{3}^{++}\right\Vert _{1},\left\Vert N_{4}^{+}\right\Vert _{1},\left\Vert N_{4}^{++}\right\Vert _{1}\Biggr\};\label{eq:qmpql-1-2}
\end{multline}
\ref{eq:qmpql-1-1}-\ref{eq:koalla} imply
\begin{multline}
\left\Vert \dfrac{\partial\mathcal{T}\left(M\right)}{\partial x}\right\Vert \leq\\
4cS\max\Biggl\{2T;\max\left\{ \dfrac{1}{c\cos\theta};\frac{1}{c\sin\theta}\right\} +2\left(b_{1}-a_{1}\right)\max\biggl\{\dfrac{1}{c\cos\theta}\left(\dfrac{1}{c\cos\theta}+\dfrac{\sin\theta}{\cos\theta}\right);\\
\dfrac{1}{c\sin\theta}\left(\dfrac{1}{c\sin\theta}+\dfrac{\cos\theta}{\sin\theta}\right)\biggr\};2\left(b_{2}-a_{2}\right)\max\left\{ \dfrac{1}{c\cos\theta};\frac{1}{c\sin\theta}\right\} \Biggr\}\left(\mathscr{V}\left(M\right)\right)^{2}+\\
\max_{1\leq i\leq4}\Biggl\{\left\Vert N_{i}^{0}\right\Vert _{1},\left(\dfrac{1}{c\cos\theta}+\dfrac{\sin\theta}{\cos\theta}\right)\left\Vert N_{1}^{-}\right\Vert _{1},\left\Vert N_{1}^{--}\right\Vert _{1},\left(\dfrac{1}{c\sin\theta}+\dfrac{\cos\theta}{\sin\theta}\right)\left\Vert N_{2}^{+}\right\Vert _{1},\left\Vert N_{2}^{--}\right\Vert _{1},\\
\left(\dfrac{1}{c\sin\theta}+\dfrac{\cos\theta}{\sin\theta}\right)\left\Vert N_{3}^{-}\right\Vert _{1},\left\Vert N_{3}^{++}\right\Vert _{1},\left(\dfrac{1}{c\cos\theta}+\dfrac{\sin\theta}{\cos\theta}\right)\left\Vert N_{4}^{+}\right\Vert _{1},\left\Vert N_{4}^{++}\right\Vert _{1}\Biggr\}.\label{eq:koalla-1}
\end{multline}
 We similarly obtain 
\begin{multline}
\left\Vert \dfrac{\partial\mathcal{T}\left(M\right)}{\partial y}\right\Vert \leq\\
4cS\max\Biggl\{2T;2\left(b_{1}-a_{1}\right)\max\left\{ \dfrac{1}{c\cos\theta};\frac{1}{c\sin\theta}\right\} ;\\
\max\left\{ \dfrac{1}{c\cos\theta};\frac{1}{c\sin\theta}\right\} +2\left(b_{2}-a_{2}\right)\max\biggl\{\dfrac{1}{c\cos\theta}\left(\dfrac{1}{c\cos\theta}+\dfrac{\sin\theta}{\cos\theta}\right);\\
\dfrac{1}{c\sin\theta}\left(\dfrac{1}{c\sin\theta}+\dfrac{\cos\theta}{\sin\theta}\right)\biggr\}\Biggr\}\left(\mathscr{V}\left(M\right)\right)^{2}+\\
\max_{1\leq i\leq4}\Biggl\{\left\Vert N_{i}^{0}\right\Vert _{1},\left\Vert N_{1}^{-}\right\Vert _{1},\left(\dfrac{1}{c\sin\theta}+\dfrac{\cos\theta}{\sin\theta}\right)\left\Vert N_{1}^{--}\right\Vert _{1},\left\Vert N_{2}^{+}\right\Vert _{1},\left(\dfrac{1}{c\cos\theta}+\dfrac{\sin\theta}{\cos\theta}\right)\left\Vert N_{2}^{--}\right\Vert _{1},\\
\left\Vert N_{3}^{-}\right\Vert _{1},\left(\dfrac{1}{c\cos\theta}+\dfrac{\sin\theta}{\cos\theta}\right)\left\Vert N_{3}^{++}\right\Vert _{1},\left\Vert N_{4}^{+}\right\Vert _{1},\left(\dfrac{1}{c\sin\theta}+\dfrac{\cos\theta}{\sin\theta}\right)\left\Vert N_{4}^{++}\right\Vert _{1}\Biggr\}.\label{eq:koalla-1-1}
\end{multline}

\ref{eq:lospp}, \ref{eq:qmpql-1-2},\ref{eq:koalla-1} and \ref{eq:koalla-1-1}
imply that for $M\in\mathscr{E}^{4},$ 
\begin{multline}
\mathscr{V}\left(\mathcal{T}\left(M\right)\right)\leq4cS\max\Biggl\{\max\left\{ 1+2T\left(c\cos\theta+c\sin\theta\right);2T\right\} ;\\
\max\left\{ \dfrac{1}{c\cos\theta};\dfrac{1}{c\sin\theta}\right\} +2\max\left\{ \dfrac{1}{c\cos\theta};\dfrac{1}{c\sin\theta};\right.\\
\dfrac{1}{c\cos\theta}\left(\dfrac{1}{c\cos\theta}+\dfrac{\sin\theta}{\cos\theta}\right);\dfrac{1}{c\sin\theta}\left(\dfrac{1}{c\sin\theta}+\dfrac{\cos\theta}{\sin\theta}\right)\biggr\}\max\left\{ b_{1}-a_{1};b_{2}-a_{2}\right\} \Biggr\}\\
\left(\mathscr{V}\left(M\right)\right)^{2}+\\
{\displaystyle \max_{1\leq i\leq4}}\Biggl\{\max\left\{ 1;c\cos\theta+c\sin\theta\right\} \left\Vert N_{i}^{0}\right\Vert _{1};\\
\max\left\{ 1;\dfrac{1}{c\cos\theta}+\dfrac{\sin\theta}{\cos\theta}\right\} \left\Vert N_{1}^{-}\right\Vert _{1};\max\left\{ 1;\dfrac{1}{c\sin\theta}+\dfrac{\cos\theta}{\sin\theta}\right\} \left\Vert N_{1}^{--}\right\Vert _{1};\\
\max\left\{ 1;\dfrac{1}{c\sin\theta}+\dfrac{\cos\theta}{\sin\theta}\right\} \left\Vert N_{2}^{+}\right\Vert _{1};\max\left\{ 1;\dfrac{1}{c\cos\theta}+\dfrac{\sin\theta}{\cos\theta}\right\} \left\Vert N_{2}^{--}\right\Vert _{1};\\
\max\left\{ 1;\dfrac{1}{c\sin\theta}+\dfrac{\cos\theta}{\sin\theta}\right\} \left\Vert N_{3}^{-}\right\Vert _{1};\max\left\{ 1;\dfrac{1}{c\cos\theta}+\dfrac{\sin\theta}{\cos\theta}\right\} \left\Vert N_{3}^{++}\right\Vert _{1};\\
\max\left\{ 1;\dfrac{1}{c\cos\theta}+\dfrac{\sin\theta}{\cos\theta}\right\} \left\Vert N_{4}^{+}\right\Vert _{1};\max\left\{ 1;\dfrac{1}{c\sin\theta}+\dfrac{\cos\theta}{\sin\theta}\right\} \left\Vert N_{4}^{++}\right\Vert _{1}\Biggr\}\label{eq:olsoep-1}
\end{multline}
that is $\mathscr{V}\left(\mathcal{T}\left(M\right)\right)\leq p\left(\mathscr{V}\left(M\right)\right)^{2}+q$
where 
\begin{multline}
p\equiv4cs\max\Biggl\{\max\left\{ 1+2T\left(c\cos\theta+c\sin\theta\right);2T\right\} ;\\
\max\left\{ \dfrac{1}{c\cos\theta};\dfrac{1}{c\sin\theta}\right\} +2\max\left\{ \dfrac{1}{c\cos\theta};\dfrac{1}{c\sin\theta};\right.\\
\dfrac{1}{c\cos\theta}\left(\dfrac{1}{c\cos\theta}+\dfrac{\sin\theta}{\cos\theta}\right);\dfrac{1}{c\sin\theta}\left(\dfrac{1}{c\sin\theta}+\dfrac{\cos\theta}{\sin\theta}\right)\biggr\}\max\left\{ b_{1}-a_{1};b_{2}-a_{2}\right\} \Biggr\}\label{eq:ololoo}
\end{multline}

\begin{multline}
q\equiv{\displaystyle \max_{1\leq i\leq4}}\Biggl\{\max\left\{ 1;c\cos\theta+c\sin\theta\right\} \left\Vert N_{i}^{0}\right\Vert _{1};\\
\max\left\{ 1;\dfrac{1}{c\cos\theta}+\dfrac{\sin\theta}{\cos\theta}\right\} \left\Vert N_{1}^{-}\right\Vert _{1};\max\left\{ 1;\dfrac{1}{c\sin\theta}+\dfrac{\cos\theta}{\sin\theta}\right\} \left\Vert N_{1}^{--}\right\Vert _{1};\\
\max\left\{ 1;\dfrac{1}{c\sin\theta}+\dfrac{\cos\theta}{\sin\theta}\right\} \left\Vert N_{2}^{+}\right\Vert _{1};\max\left\{ 1;\dfrac{1}{c\cos\theta}+\dfrac{\sin\theta}{\cos\theta}\right\} \left\Vert N_{2}^{--}\right\Vert _{1};\\
\max\left\{ 1;\dfrac{1}{c\sin\theta}+\dfrac{\cos\theta}{\sin\theta}\right\} \left\Vert N_{3}^{-}\right\Vert _{1};\max\left\{ 1;\dfrac{1}{c\cos\theta}+\dfrac{\sin\theta}{\cos\theta}\right\} \left\Vert N_{3}^{++}\right\Vert _{1};\\
\max\left\{ 1;\dfrac{1}{c\cos\theta}+\dfrac{\sin\theta}{\cos\theta}\right\} \left\Vert N_{4}^{+}\right\Vert _{1};\max\left\{ 1;\dfrac{1}{c\sin\theta}+\dfrac{\cos\theta}{\sin\theta}\right\} \left\Vert N_{4}^{++}\right\Vert _{1}\Biggr\}\label{eq:kdipzp-1}
\end{multline}

If $M\in\mathscr{B}_{R}$ then $\mathscr{V}\left(\mathcal{T}\left(M\right)\right)\leq pR^{2}+q.$
So $\mathcal{T}\left(\mathscr{B}_{R}\right)\subset\mathscr{B}_{pR^{2}+q}.$
\end{proof}
\begin{lem}
\label{sllso}The operator $\mathcal{T}$ is compact on $\mathscr{B}_{R}$
forall $R>0.$ 
\end{lem}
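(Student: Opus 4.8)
The plan is to read off the compactness of $\mathcal{T}$ on $\mathscr{B}_{R}$ directly from the three preceding propositions, so that no new computation is needed. Here a map on a subset of a Banach space is called a \emph{compact operator} when it is continuous and sends that subset to a relatively compact set; I will verify these two properties in turn.

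First I would invoke Lemma \ref{lem:-is-continuous.}: since $\mathcal{T}$ is continuous on all of $C\left(\mathscr{P};\R^{4}\right)$, its restriction to $\mathscr{B}_{R}$ is continuous as well. Next, to control the image, I would apply Proposition \ref{prop::opps}, which produces constants $p,q>0$ with $\mathcal{T}\left(\mathscr{B}_{R}\right)\subset\mathscr{B}_{pR^{2}+q}$; set $R'\equiv pR^{2}+q>0$, so that $\mathcal{T}\left(\mathscr{B}_{R}\right)\subset\mathscr{B}_{R'}$ (in particular $\mathcal{T}\left(\mathscr{B}_{R}\right)\subset\mathscr{E}^{4}$, consistently with Proposition \ref{prop:Suppose--such}).

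Finally I would apply Proposition \ref{prop:The-sets-} to $\mathscr{B}_{R'}$: as $R'>0$, the set $\mathscr{B}_{R'}$ is relatively compact in $\left(C\left(\mathscr{P};\R^{4}\right),\left\Vert \cdot\right\Vert \right)$. A subset of a relatively compact set is relatively compact, hence $\mathcal{T}\left(\mathscr{B}_{R}\right)$ is relatively compact in $\left(C\left(\mathscr{P};\R^{4}\right),\left\Vert \cdot\right\Vert \right)$. Combining this with the continuity from the first step, $\mathcal{T}$ is compact on $\mathscr{B}_{R}$, and $R>0$ was arbitrary.

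There is no real obstacle at this stage: the substantive work — the equicontinuity estimate obtained from the mean value inequality in Proposition \ref{prop:The-sets-}, together with Arzelà--Ascoli, and the quadratic bound $\mathscr{V}\left(\mathcal{T}\left(M\right)\right)\le p\left(\mathscr{V}\left(M\right)\right)^{2}+q$ of Proposition \ref{prop::opps} — has already been done. The only point worth stressing is that one must read ``compact operator'' as ``continuous with relatively compact image,'' which is why Lemma \ref{lem:-is-continuous.} is used alongside the image bound; this is precisely the form needed to invoke Schauder's fixed point theorem on a ball $\mathscr{B}_{R}$ mapped into itself, which in turn forces the choice of an $R$ with $pR^{2}+q\le R$ and hence the smallness hypothesis $pq\le\tfrac14$ of Theorem \ref{thm:Suppose-.-Then}.
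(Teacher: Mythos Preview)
Your proof is correct and follows exactly the paper's own argument: invoke continuity of $\mathcal{T}$ (Lemma~\ref{lem:-is-continuous.}), the inclusion $\mathcal{T}(\mathscr{B}_R)\subset\mathscr{B}_{pR^2+q}$ (Proposition~\ref{prop::opps}), and the relative compactness of $\mathscr{B}_{pR^2+q}$ (Proposition~\ref{prop:The-sets-}). The additional remarks you add about the meaning of ``compact operator'' and the connection to Schauder's theorem are accurate and consistent with how the paper proceeds.
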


\begin{proof}
$\mathcal{T}$ is continuous, $\mathcal{T}\left(\mathscr{B}_{R}\right)\subset\mathscr{B}_{pR^{2}+q}$
and $\mathscr{B}_{pR^{2}+q}$ is relatively compact in $\left(C\left(\begin{array}{r}
\mathscr{P}\end{array};\R^{4}\right),\left\Vert \cdot\right\Vert \right).$ Thus $\mathcal{T}$ is compact on $\mathscr{B}_{R}.$ 
\end{proof}
\begin{lem}
\label{prop::eolpa} If $pq\leq\dfrac{1}{4}$ and $\dfrac{1-\sqrt{1-4pq}}{2p}\leq R\leq\dfrac{1+\sqrt{1-4pq}}{2p}$
then $\mathcal{T}\left(\mathcal{\mathscr{B}}_{R}\right)\subset\mathcal{\mathcal{\mathscr{B}}}_{R}.$ 
\end{lem}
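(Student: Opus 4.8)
The plan is to reduce the claim to a one–line quadratic inequality and then invoke the growth estimate already established. First I would recall Proposition \ref{prop::opps}: for every $R>0$ one has $\mathcal{T}\left(\mathscr{B}_{R}\right)\subset\mathscr{B}_{pR^{2}+q}$, where $p,q>0$ are the explicit constants defined in \eqref{eq:ololoo}--\eqref{eq:kdipzp-1}. Next I would note the obvious monotonicity of the family: since $\mathscr{B}_{\rho}=\left\{N\in\mathscr{E}^{4}:\mathscr{V}(N)\leq\rho\right\}$, we have $\mathscr{B}_{\rho}\subseteq\mathscr{B}_{\rho'}$ whenever $0<\rho\leq\rho'$. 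Hence $\mathcal{T}\left(\mathscr{B}_{R}\right)\subset\mathscr{B}_{R}$ will follow as soon as we exhibit those $R$ for which $pR^{2}+q\leq R$, i.e. $pR^{2}-R+q\leq0$.

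Then I would analyse the real quadratic $\varphi(R)=pR^{2}-R+q$. Its discriminant is $\Delta=1-4pq$, so the hypothesis $pq\leq\tfrac14$ is exactly the condition $\Delta\geq0$ under which $\varphi$ has the two real roots $R_{\mp}=\dfrac{1\mp\sqrt{1-4pq}}{2p}$. Because $p>0$, the parabola opens upward, so $\varphi(R)\leq0$ precisely on the closed interval $[R_{-},R_{+}]$; and because $q>0$ (the product of the roots is $q/p>0$ and their sum is $1/p>0$, while $\sqrt{1-4pq}<1$), both roots are strictly positive, so $[R_{-},R_{+}]\subset(0,\infty)$ and the range of $R$ in the statement is nonempty and admissible. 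Thus for every $R$ with $R_{-}\leq R\leq R_{+}$ one has $pR^{2}+q\leq R$.

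Finally I would conclude by chaining the two inclusions: for such $R$, $\mathcal{T}\left(\mathscr{B}_{R}\right)\subset\mathscr{B}_{pR^{2}+q}\subseteq\mathscr{B}_{R}$, which is the assertion.

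There is essentially no serious obstacle here: the lemma is a bookkeeping step that repackages Proposition \ref{prop::opps} as a self-map statement, and the genuine analytic work — the quadratic bound $\mathscr{V}\left(\mathcal{T}(M)\right)\leq p\,\mathscr{V}(M)^{2}+q$ — has already been done there. The only points deserving a sentence of care are the monotonicity $\rho\mapsto\mathscr{B}_{\rho}$ and the verification that $[R_{-},R_{+}]$ is a nonempty subinterval of $(0,\infty)$, both immediate once $\Delta\geq0$ and $p,q>0$.
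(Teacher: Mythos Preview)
Your argument is correct and follows exactly the paper's own approach: the paper's proof simply observes that the hypothesis on $R$ gives $pR^{2}-R+q\leq0$, hence $pR^{2}+q\leq R$, and then combines this with $\mathcal{T}(\mathscr{B}_{R})\subset\mathscr{B}_{pR^{2}+q}$ from Proposition~\ref{prop::opps}. You have merely spelled out the discriminant analysis and the monotonicity $\mathscr{B}_{\rho}\subseteq\mathscr{B}_{\rho'}$ more explicitly than the paper does.
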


\begin{proof}
We have $pR^{2}-R+q\leq0.$ Hence $pR^{2}+q\leq R$. But $\mathcal{T}\left(\mathscr{B}_{R}\right)\subset\mathscr{B}_{pR^{2}+q}$
therefore $\mathcal{T}\left(\mathcal{\mathscr{B}}_{R}\right)\subset\mathcal{\mathcal{\mathscr{B}}}_{R}.$ 
\end{proof}
\begin{thm}
\label{sccssq}( Schauder \cite{17})

Let $\mathcal{M}$ be a non-empty convex subset of a normed space
$\mathscr{X}$ and $\mathcal{T}$ be a continuous compact mapping
from $\mathcal{M}$ into $\mathscr{\mathcal{M}}$ . Then $\mathcal{T}$
has a fixed point. 
\end{thm}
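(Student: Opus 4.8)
The plan is to reduce the statement to the finite-dimensional Brouwer fixed-point theorem by approximating $\mathcal{T}$ with continuous maps of finite rank, exploiting the compactness of $\mathcal{T}$. Throughout I read ``compact mapping from $\mathcal{M}$ into $\mathcal{M}$'' as: $\mathcal{T}$ is continuous and the closure $K\equiv\overline{\mathcal{T}\left(\mathcal{M}\right)}$ is a compact subset of $\mathcal{M}$. I take Brouwer's theorem (every continuous self-map of a non-empty compact convex subset of a finite-dimensional normed space has a fixed point) as the known finite-dimensional input.

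First I would construct, for each $\varepsilon>0$, a Schauder projection. Since $K$ is compact, there are finitely many points $y_{1},\dots,y_{n}\in\mathcal{T}\left(\mathcal{M}\right)\subset\mathcal{M}$ with $K\subset\bigcup_{i=1}^{n}B\left(y_{i},\varepsilon\right)$. Setting $\mu_{i}\left(y\right)=\max\left\{0,\varepsilon-\left\Vert y-y_{i}\right\Vert\right\}$, I define
\[
P_{\varepsilon}\left(y\right)=\frac{\sum_{i=1}^{n}\mu_{i}\left(y\right)y_{i}}{\sum_{i=1}^{n}\mu_{i}\left(y\right)},\qquad y\in K,
\]
where the denominator is strictly positive on $K$ because each $y$ lies within $\varepsilon$ of some $y_{i}$. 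The map $P_{\varepsilon}$ is continuous, and since $P_{\varepsilon}\left(y\right)$ is a convex combination of those $y_{i}$ with $\left\Vert y-y_{i}\right\Vert<\varepsilon$, it satisfies the key approximation inequality $\left\Vert P_{\varepsilon}\left(y\right)-y\right\Vert\leq\varepsilon$ for all $y\in K$. Let $C_{\varepsilon}$ be the convex hull of $\left\{y_{1},\dots,y_{n}\right\}$; it is a compact convex subset of the finite-dimensional subspace $\mathrm{span}\left\{y_{1},\dots,y_{n}\right\}$, and since the $y_{i}\in\mathcal{M}$ and $\mathcal{M}$ is convex, $C_{\varepsilon}\subset\mathcal{M}$.

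Next, because $C_{\varepsilon}\subset\mathcal{M}$ gives $\mathcal{T}\left(C_{\varepsilon}\right)\subset\mathcal{T}\left(\mathcal{M}\right)\subset K$, the composition $\mathcal{T}_{\varepsilon}\equiv P_{\varepsilon}\circ\mathcal{T}$ is a continuous self-map of the compact convex set $C_{\varepsilon}$. Brouwer's theorem applied to $\mathcal{T}_{\varepsilon}$ on $C_{\varepsilon}$ yields a point $x_{\varepsilon}\in C_{\varepsilon}$ with $x_{\varepsilon}=P_{\varepsilon}\left(\mathcal{T}\left(x_{\varepsilon}\right)\right)$, whence the approximation inequality gives $\left\Vert x_{\varepsilon}-\mathcal{T}\left(x_{\varepsilon}\right)\right\Vert=\left\Vert P_{\varepsilon}\left(\mathcal{T}\left(x_{\varepsilon}\right)\right)-\mathcal{T}\left(x_{\varepsilon}\right)\right\Vert\leq\varepsilon$. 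Taking $\varepsilon=1/k$ produces a sequence $x_{k}\in\mathcal{M}$ with $\left\Vert x_{k}-\mathcal{T}\left(x_{k}\right)\right\Vert\to0$. Since $\mathcal{T}\left(x_{k}\right)\in K$ and $K$ is compact, a subsequence $\mathcal{T}\left(x_{k_{j}}\right)\to z\in K\subset\mathcal{M}$; then $x_{k_{j}}\to z$ as well, and continuity of $\mathcal{T}$ gives $\mathcal{T}\left(z\right)=\lim\mathcal{T}\left(x_{k_{j}}\right)=z$, so $z$ is the desired fixed point.

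I expect the main obstacle to be the construction and verification of the Schauder projection, in particular the approximation bound $\left\Vert P_{\varepsilon}\left(y\right)-y\right\Vert\leq\varepsilon$ and the fact that its finite-dimensional range sits inside $\mathcal{M}$, which is exactly where convexity of $\mathcal{M}$ enters. The only other delicate point is ensuring the limit $z$ stays in $\mathcal{M}$ so that $\mathcal{T}\left(z\right)$ is defined; this is guaranteed by compactness, since $z$ is a limit of points of $\mathcal{T}\left(\mathcal{M}\right)$ and hence lies in the compact set $K\subset\mathcal{M}$. Brouwer's theorem itself is invoked as the known finite-dimensional base case rather than reproved.
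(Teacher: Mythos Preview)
Your proof is correct and is precisely the standard Schauder-projection argument (finite $\varepsilon$-net on the compact set $K=\overline{\mathcal{T}(\mathcal{M})}$, partition-of-unity projection into a finite-dimensional simplex $C_\varepsilon\subset\mathcal{M}$, Brouwer on $C_\varepsilon$, then a compactness limit). The paper, however, does not prove this theorem at all: it is stated as a quotation from Smart's monograph \cite{17} and used as a black box in the proof of the main result. So there is nothing to compare against; you have simply supplied the textbook proof that the paper omits by citation.
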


\subsection{Proof of the main theorem}
\begin{proof}
Let $\dfrac{1-\sqrt{1-4pq}}{2p}\leq R\leq\dfrac{1+\sqrt{1-4pq}}{2p}.$
From proposition (\ref{prop:The-sets-}), $\mathcal{\mathcal{\mathscr{B}}_{R}}$
is a non-empty convex subset of $\left(C\left(\begin{array}{r}
\mathscr{P}\end{array};\R^{4}\right),\left\Vert \cdot\right\Vert \right).$ From lemmas (\ref{lem:-is-continuous.}), (\ref{sllso}) and (\ref{prop::eolpa})
, $\mathcal{T}$ is continuous compact from $\mathcal{\mathcal{\mathscr{B}}_{R}}$
into $\mathcal{\mathcal{\mathcal{\mathscr{B}}_{R}}}.$ Thus according
to Schauder's theorem \ref{sccssq}, $\mathcal{T}$ has a fixed point
$N\in\mathcal{\mathcal{\mathscr{B}}_{R}}$ which is thus a solution
of problem $\Sigma$ and hence non-negative from theorem (\ref{thm::etaatyeyaiioa-1-1}).
$N\in\mathscr{B}_{R}$ also implies that $N\in\mathscr{E}^{4}$ and
$\mathscr{V}\left(N\right)\leq R\leq\dfrac{1+\sqrt{1-4pq}}{2p}.$
Thus $N\in C\left(\begin{array}{r}
\mathscr{P}\end{array};\R^{4}\right)$ and $\dfrac{\partial N_{i}}{\partial t},\dfrac{\partial N_{i}}{\partial x},\dfrac{\partial N_{i}}{\partial y}$
($i=1,2,3,4)$ are defined in $\mathring{\mathscr{P}},$ except possibly
on a finite number of planes , and continuous and bounded with $\left\Vert N\right\Vert ,\left\Vert \dfrac{\partial N}{\partial t}\right\Vert ,\left\Vert \dfrac{\partial N}{\partial x}\right\Vert ,\left\Vert \dfrac{\partial N}{\partial y}\right\Vert \leq\dfrac{1+\sqrt{1-4pq}}{2p}.$

Now, suppose that the problem $\Sigma$ has two solutions $M$ and
$N$ satisfying ${\displaystyle \left\Vert M\right\Vert ,\left\Vert N\right\Vert }\leq\dfrac{1+\sqrt{1-4pq}}{2p}.$
From relation (\ref{eq:lsoopa-2}) it holds
\begin{multline}
\left\Vert \mathcal{T}\left(M\right)-\mathcal{T}\left(N\right)\right\Vert \leq p'\cdot2\cdot\dfrac{1+\sqrt{1-4pq}}{2p}\left\Vert M-N\right\Vert \\
\leq\dfrac{p'}{p}\cdot\left(1+\sqrt{1-4pq}\right)\left\Vert M-N\right\Vert .\label{eq:lsoopa-1}
\end{multline}
As $M$ and $N$ are fixed points of $\mathcal{T},$ (\ref{eq:lsoopa-1})
rewrites
\begin{align}
\left\Vert M-N\right\Vert  & \leq\dfrac{p'}{p}\cdot\left(1+\sqrt{1-4pq}\right)\left\Vert M-N\right\Vert \label{eq:lsoopa-1-1}
\end{align}
i.e. 
\begin{align}
\left(1-\dfrac{p'}{p}\cdot\left(1+\sqrt{1-4pq}\right)\right)\left\Vert M-N\right\Vert  & \leq0.\label{eq:lsoopa-1-1-1}
\end{align}

From (\ref{eq:lsoopa-2}) and (\ref{eq:ololoo}) we have 
\begin{multline}
\dfrac{p'}{p}=\max\Biggl\{ T,\dfrac{b_{1}-a_{1}}{c\cos\theta},{\displaystyle \frac{b_{2}-a_{2}}{c\sin\theta}},\dfrac{b_{1}-a_{1}}{c\sin\theta},{\displaystyle \frac{b_{2}-a_{2}}{c\cos\theta}}\Biggl\}\times\\
\Biggl(\max\Biggl\{\max\left\{ 1+2T\left(c\cos\theta+c\sin\theta\right);2T\right\} ;\\
\max\left\{ \dfrac{1}{c\cos\theta};\dfrac{1}{c\sin\theta}\right\} +2\max\left\{ \dfrac{1}{c\cos\theta};\dfrac{1}{c\sin\theta};\right.\dfrac{1}{c\cos\theta}\left(\dfrac{1}{c\cos\theta}+\dfrac{\sin\theta}{\cos\theta}\right);\\
\dfrac{1}{c\sin\theta}\left(\dfrac{1}{c\sin\theta}+\dfrac{\cos\theta}{\sin\theta}\right)\biggr\}\max\left\{ b_{1}-a_{1};b_{2}-a_{2}\right\} \Biggr)^{-1}\\
\equiv\dfrac{\max\left\{ T,\alpha\right\} }{\max\left\{ \beta_{T},\beta\right\} }
\end{multline}
where 

\begin{multline}
\alpha=\max\left\{ \dfrac{b_{1}-a_{1}}{c\cos\theta},{\displaystyle \frac{b_{2}-a_{2}}{c\sin\theta}},\dfrac{b_{1}-a_{1}}{c\sin\theta},{\displaystyle \frac{b_{2}-a_{2}}{c\cos\theta}}\right\} ;\\
\beta_{T}=\max\left\{ 1+2T\left(c\cos\theta+c\sin\theta\right);2T\right\} 
\end{multline}
\begin{multline}
\beta=\max\left\{ \dfrac{1}{c\cos\theta};\dfrac{1}{c\sin\theta}\right\} +2\max\left\{ \dfrac{1}{c\cos\theta};\dfrac{1}{c\sin\theta};\right.\\
\dfrac{1}{c\cos\theta}\left(\dfrac{1}{c\cos\theta}+\dfrac{\sin\theta}{\cos\theta}\right);\dfrac{1}{c\sin\theta}\left(\dfrac{1}{c\sin\theta}+\dfrac{\cos\theta}{\sin\theta}\right)\biggr\}\max\left\{ b_{1}-a_{1};b_{2}-a_{2}\right\} .
\end{multline}
We have
\begin{equation}
\begin{cases}
\beta_{T}\geq2T\\
\beta>2\alpha
\end{cases}
\end{equation}
\begin{equation}
\max\left\{ \beta_{T},\beta\right\} \geq2\max\left\{ T,\alpha\right\} 
\end{equation}
and

\[
\dfrac{p'}{p}=\dfrac{\max\left\{ T,\alpha\right\} }{\max\left\{ \beta_{T},\beta\right\} }\leq\dfrac{1}{2}.
\]

As $1+\sqrt{1-4pq}<2,$ we have $\dfrac{p'}{p}\cdot\left(1+\sqrt{1-4pq}\right)<1$
i.e. $1-\dfrac{p'}{p}\cdot\left(1+\sqrt{1-4pq}\right)>0.$

From (\ref{eq:lsoopa-1-1-1}) it follows that $\left\Vert M-N\right\Vert \leq0$
therefore $M=N$ . Hence the uniqueness.
\end{proof}

\section{Conclusion}

In this paper we generalize the results obtained in \cite{key-1}
to the general four-velocity Broadwell model in the plane. Precisely,
we provide the proof of existence and uniqueness of classical positive
solutions for the initial-boundary value problem for this model in
a rectangular domain with Dirichlet boundary conditions. This work
represents a key step toward a broader mathematical understanding
of initial-boundary value problems for unsteady discrete kinetic models
in higher dimensions.

\appendix

\section*{Appendix}

\section{\label{sec:Solution-of-the}Solution of the system $\left(\Sigma_{M}\right)$}

\begin{multline}
\mathcal{T}_{1}\left(M\right)\left(t,x,y\right)=\mathcal{T}_{1}^{A}\left(M\right)\left(t,x,y\right)\cdot\mathbb{I}_{\begin{cases}
x-ct\cos\theta\geq a_{1}\\
y-ct\sin\theta\geq a_{2}
\end{cases}}\left(t,x,y\right)+\\
\mathcal{T}_{1}^{B}\left(M\right)\left(t,x,y\right)\cdot\mathbb{I}_{\begin{cases}
x-ct\cos\theta\leq a_{1}\\
x\sin\theta-y\cos\theta\leq a_{1}\sin\theta-a_{2}\cos\theta
\end{cases}}\left(t,x,y\right)+\\
\mathcal{T}_{1}^{C}\left(M\right)\left(t,x,y\right)\cdot\mathbb{I}_{\begin{cases}
y-ct\sin\theta\leq a_{2}\\
x\sin\theta-y\cos\theta\geq a_{1}\sin\theta-a_{2}\cos\theta
\end{cases}}\left(t,x,y\right)\label{eq:oloooso}
\end{multline}
where 
\begin{multline}
\mathcal{T}_{1}^{A}\left(M\right)\left(t,x,y\right)={\displaystyle \int_{0}^{t}}Q\left(M\right)\left(\right.s,x+c\left(s-t\right)\cos\theta,\\
y+c\left(s-t\right)\sin\theta\left.\right)ds+N_{1}^{0}\left(x-ct\cos\theta,y-ct\sin\theta\right)\label{eq:ikiiz}
\end{multline}
\begin{multline}
\mathcal{T}_{1}^{B}\left(M\right)\left(t,x,y\right)={\textstyle {\displaystyle \int_{0}^{\frac{1}{c\cos\theta}x-\frac{a_{1}}{c\cos\theta}}}}{\textstyle Q\left(M\right)}\left(\right.s+t-{\textstyle \frac{1}{c\cos\theta}}x+{\textstyle \frac{a_{1}}{c\cos\theta}},\\
sc\cos\theta+a_{1},sc\sin\theta-{\textstyle \frac{\sin\theta}{\cos\theta}}x+y+{\textstyle \frac{\sin\theta}{\cos\theta}}a_{1}\left.\right)ds+\\
{\textstyle N_{1}^{-}\left(t-\frac{1}{c\cos\theta}x+\frac{a_{1}}{c\cos\theta},-\frac{\sin\theta}{\cos\theta}x+y+\frac{\sin\theta}{\cos\theta}a_{1}\right)}
\end{multline}
\begin{multline}
\mathcal{T}_{1}^{C}\left(M\right)\left(t,x,y\right)={\textstyle {\displaystyle \int_{0}^{\frac{1}{c\sin\theta}y-\frac{a_{2}}{c\sin\theta}}}}{\textstyle Q\left(M\right)}\left(\right.s+t-{\textstyle \frac{1}{c\sin\theta}}y+{\textstyle \frac{a_{2}}{c\sin\theta}},\\
sc\cos\theta+x-{\textstyle \frac{\cos\theta}{\sin\theta}}y+{\textstyle \frac{\cos\theta}{\sin\theta}}a_{2},sc\sin\theta+a_{2}\left.\right)ds+\\
{\textstyle N_{1}^{--}\left(t-\frac{1}{c\sin\theta}y+\frac{a_{2}}{c\sin\theta},x-\frac{\cos\theta}{\sin\theta}y+\frac{\cos\theta}{\sin\theta}a_{2}\right)}\label{eq:looaoolzo}
\end{multline}
\begin{multline}
\mathcal{T}_{2}\left(M\right)\left(t,x,y\right)=\mathcal{T}_{2}^{A}\left(M\right)\left(t,x,y\right)\cdot\mathbb{I}_{\begin{cases}
x-ct\cos\theta\geq a_{1}\\
y-ct\sin\theta\geq a_{2}
\end{cases}}\left(t,x,y\right)+\\
\mathcal{T}_{2}^{B}\left(M\right)\left(t,x,y\right)\cdot\mathbb{I}_{\begin{cases}
x-ct\cos\theta\leq a_{1}\\
x\sin\theta-y\cos\theta\leq a_{1}\sin\theta-a_{2}\cos\theta
\end{cases}}\left(t,x,y\right)+\\
\mathcal{T}_{2}^{C}\left(M\right)\left(t,x,y\right)\cdot\mathbb{I}_{\begin{cases}
y-ct\sin\theta\leq a_{2}\\
x\sin\theta-y\cos\theta\geq a_{1}\sin\theta-a_{2}\cos\theta
\end{cases}}\left(t,x,y\right)\label{eq:ppmms-2-1}
\end{multline}
where

\begin{multline}
\mathcal{T}_{2}^{A}\left(M\right)\left(t,x,y\right)={\textstyle {\displaystyle \int_{0}^{t}}}-Q\left(M\right)\left(\right.s,x-c\left(s-t\right)\sin\theta,\\
y+c\left(s-t\right)\cos\theta\left.\right)ds+{\textstyle N_{2}^{0}\left(x+ct\sin\theta,y-ct\cos\theta\right)}\label{eq:zoppz}
\end{multline}
\begin{multline}
\mathcal{T}_{2}^{B}\left(M\right)\left(t,x,y\right)={\textstyle {\displaystyle \int_{0}^{-\frac{1}{c\sin\theta}x+\frac{b_{1}}{c\sin\theta}}}}-{\textstyle Q\left(M\right)}\Bigl(s+t+{\displaystyle \frac{1}{c\sin\theta}}x-\frac{b_{1}}{c\sin\theta},\\
-sc\sin\theta+b_{1},sc\cos\theta+\frac{\cos\theta}{\sin\theta}x+y-\frac{\cos\theta}{\sin\theta}b_{1}\Bigr)ds+\\
{\textstyle N_{2}^{+}\left(t+\frac{1}{c\sin\theta}x-\frac{b_{1}}{c\sin\theta},\frac{\cos\theta}{\sin\theta}x+y-\frac{\cos\theta}{\sin\theta}b_{1}\right)}\cdot\label{eq:opps}
\end{multline}
\begin{multline}
\mathcal{T}_{2}^{C}\left(M\right)\left(t,x,y\right)={\textstyle {\displaystyle \int_{0}^{\frac{1}{c\cos\theta}y-\frac{a_{2}}{c\cos\theta}}}}{\textstyle -Q\left(M\right)}\Bigl(s+t-\frac{1}{c\cos\theta}y+\frac{a_{2}}{c\cos\theta},\\
-sc\sin\theta+x+\frac{\sin\theta}{\cos\theta}y-\frac{\sin\theta}{\cos\theta}a_{2},sc\cos\theta+a_{2}\Bigr)ds+\\
{\textstyle N_{2}^{--}\left(t-\frac{1}{c\cos\theta}y+\frac{a_{2}}{c\cos\theta},x+\frac{\sin\theta}{\cos\theta}y-\frac{\sin\theta}{\cos\theta}a_{2}\right)}\cdot\label{eq:opos}
\end{multline}

\begin{multline}
\mathcal{T}_{3}\left(M\right)\left(t,x,y\right)=\mathcal{T}_{3}^{A}\left(M\right)\left(t,x,y\right)\cdot\mathbb{I}_{\begin{cases}
x-ct\sin\theta\geq a_{1}\\
y+ct\cos\theta\leq b_{2}
\end{cases}}\left(t,x,y\right)+\\
\mathcal{T}_{3}^{B}\left(M\right)\left(t,x,y\right)\cdot\mathbb{I}_{\begin{cases}
x-ct\sin\theta\leq a_{1}\\
x\cos\theta+y\sin\theta\leq a_{1}\cos\theta+b_{2}\sin\theta
\end{cases}}\left(t,x,y\right)+\\
\mathcal{T}_{3}^{C}\left(M\right)\left(t,x,y\right)\cdot\mathbb{I}_{\begin{cases}
y+ct\cos\theta\geq b_{2}\\
x\cos\theta+y\sin\theta\geq a_{1}\cos\theta+b_{2}\sin\theta
\end{cases}}\left(t,x,y\right)\label{aoalal-1-1-2}
\end{multline}
where 

\begin{multline}
\mathcal{T}_{3}^{A}\left(M\right)\left(t,x,y\right)={\displaystyle \int_{0}^{t}}-Q\left(M\right)\left(s,x+c\left(s-t\right)\sin\theta,y-c\left(s-t\right)\cos\theta\right)ds+\\
N_{3}^{0}\left(x-ct\sin\theta,y+ct\cos\theta\right)\label{eq:lqooqsd}
\end{multline}
\begin{multline*}
\mathcal{T}_{3}^{B}\left(M\right)\left(t,x,y\right)={\textstyle {\displaystyle \int_{0}^{\frac{1}{c\sin\theta}x-\frac{a_{1}}{c\sin\theta}}}}{\textstyle -Q\left(M\right)}\Bigl(s+t-\frac{1}{c\sin\theta}x+\frac{a_{1}}{c\sin\theta},\\
sc\sin\theta+a_{1},-sc\cos\theta+\frac{\cos\theta}{\sin\theta}x+y-\frac{\cos\theta}{\sin\theta}a_{1}\Bigr)ds+\\
{\textstyle N_{3}^{-}\left(t-\frac{1}{c\sin\theta}x+\frac{a_{1}}{c\sin\theta},\frac{\cos\theta}{\sin\theta}x+y-\frac{\cos\theta}{\sin\theta}a_{1}\right)}
\end{multline*}
\begin{multline}
\mathcal{T}_{3}^{C}\left(M\right)\left(t,x,y\right)={\textstyle {\displaystyle \int_{0}^{-\frac{1}{c\cos\theta}y+\frac{b_{2}}{c\cos\theta}}}}{\textstyle -Q\left(M\right)}\Bigl(s+t+\frac{1}{c\cos\theta}y-\frac{b_{2}}{c\cos\theta},\\
sc\sin\theta+x+\frac{\sin\theta}{\cos\theta}y-\frac{\sin\theta}{\cos\theta}b_{2},-sc\cos\theta+b_{2}\Bigr)ds+\\
{\textstyle N_{3}^{++}\left(t+\frac{1}{c\cos\theta}y-\frac{b_{2}}{c\cos\theta},x+\frac{\sin\theta}{\cos\theta}y-\frac{\sin\theta}{\cos\theta}b_{2}\right)}\label{eq:loiuij}
\end{multline}

\begin{multline}
\mathcal{T}_{4}\left(M\right)\left(t,x,y\right)=\mathcal{T}_{4}^{A}\left(M\right)\left(t,x,y\right)\cdot\mathbb{I}_{\begin{cases}
x+ct\cos\theta\leq b_{1}\\
y+ct\sin\theta\leq b_{2}
\end{cases}}\left(t,x,y\right)+\\
\mathcal{T}_{4}^{B}\left(M\right)\left(t,x,y\right)\cdot\mathbb{I}_{\begin{cases}
x+ct\cos\theta\geq b_{1}\\
x\sin\theta-y\cos\theta\geq b_{1}\sin\theta-b_{2}\cos\theta
\end{cases}}\left(t,x,y\right)+\\
\mathcal{T}_{4}^{C}\left(M\right)\left(t,x,y\right)\cdot\mathbb{I}_{\begin{cases}
y+ct\sin\theta\geq b_{2}\\
x\sin\theta-y\cos\theta\leq b_{1}\sin\theta-b_{2}\cos\theta
\end{cases}}\left(t,x,y\right)\label{aoalal-1-1-1-2}
\end{multline}
where 
\begin{multline}
\mathcal{T}_{4}^{A}\left(M\right)\left(t,x,y\right)={\displaystyle \int_{0}^{t}}Q\left(M\right)\left(s,x-c\left(s-t\right)\cos\theta,y-c\left(s-t\right)\sin\theta\right)ds+\\
N_{4}^{0}\left(x+ct\cos\theta,y+ct\sin\theta\right)\label{eq:olole-1-2-1-1}
\end{multline}
\begin{multline}
\mathcal{T}_{4}^{B}\left(M\right)\left(t,x,y\right)=\Biggl({\textstyle {\displaystyle \int_{0}^{-\frac{1}{c\cos\theta}x+\frac{b_{1}}{c\cos\theta}}}}{\textstyle Q\left(M\right)}\Bigl(s+t+\frac{1}{c\cos\theta}x-\frac{b_{1}}{c\cos\theta},\\
-sc\cos\theta+b_{1},-sc\sin\theta-\frac{\sin\theta}{\cos\theta}x+y+\frac{\sin\theta}{\cos\theta}b_{1}\Bigr)ds+\\
{\textstyle N_{4}^{+}\left(t+\frac{1}{c\cos\theta}x-\frac{b_{1}}{c\cos\theta},-\frac{\sin\theta}{\cos\theta}x+y+\frac{\sin\theta}{\cos\theta}b_{1}\right)}\label{eq:olole-1-1-1-2-1}
\end{multline}
\begin{multline}
\mathcal{T}_{4}^{C}\left(M\right)\left(t,x,y\right)={\textstyle {\displaystyle \int_{0}^{-\frac{1}{c\sin\theta}y+\frac{b_{2}}{c\sin\theta}}}}{\textstyle Q\left(M\right)}\Bigl(s+t+\frac{1}{c\sin\theta}y-\frac{b_{2}}{c\sin\theta},\\
-sc\cos\theta+x-\frac{\cos\theta}{\sin\theta}y+\frac{\cos\theta}{\sin\theta}b_{2},-sc\sin\theta+b_{2}\Bigr)ds+\\
{\textstyle N_{4}^{++}\left(t+\frac{1}{c\sin\theta}y-\frac{b_{2}}{c\sin\theta},x-\frac{\cos\theta}{\sin\theta}y+\frac{\cos\theta}{\sin\theta}b_{2}\right)}\cdot\label{eq:olole-1-1-1-1-1-2-1}
\end{multline}

\section{\label{sec:Derivatives}Derivatives $\dfrac{\partial\mathcal{T}_{i}\left(M\right)}{\partial t},\dfrac{\partial\mathcal{T}_{i}\left(M\right)}{\partial x},\dfrac{\partial\mathcal{T}_{i}\left(M\right)}{\partial y},\left(i=1,2,3,4\right)$}

$\dfrac{\partial\mathcal{T}_{1}\left(M\right)}{\partial t},\dfrac{\partial\mathcal{T}_{1}\left(M\right)}{\partial x},\dfrac{\partial\mathcal{T}_{1}\left(M\right)}{\partial y}$
are defined in $\mathring{\mathscr{P}}$ except perhaps on the planes
given respectively by the equations $x-ct\cos\theta=a_{1};y-ct\sin\theta=a_{2};x\sin\theta-y\cos\theta=a_{1}\sin\theta-a_{2}\cos\theta$
and
\begin{multline}
\left(\dfrac{\partial}{\partial t},\dfrac{\partial}{\partial x},\dfrac{\partial}{\partial y}\right)\mathcal{T}_{1}\left(M\right)\left(t,x,y\right)=\\
\left(\dfrac{\partial}{\partial t},\dfrac{\partial}{\partial x},\dfrac{\partial}{\partial y}\right)\mathcal{T}_{1}^{A}\left(M\right)\left(t,x,y\right)\cdot\mathbb{I}_{\begin{cases}
x-ct\cos\theta>a_{1}\\
y-ct\sin\theta>a_{2}
\end{cases}}\left(t,x,y\right)+\\
\left(\dfrac{\partial}{\partial t},\dfrac{\partial}{\partial x},\dfrac{\partial}{\partial y}\right)\mathcal{T}_{1}^{B}\left(M\right)\left(t,x,y\right)\cdot\\
\mathbb{I}_{\begin{cases}
x-ct\cos\theta<a_{1}\\
x\sin\theta-y\cos\theta<a_{1}\sin\theta-a_{2}\cos\theta
\end{cases}}\left(t,x,y\right)+\\
\left(\dfrac{\partial}{\partial t},\dfrac{\partial}{\partial x},\dfrac{\partial}{\partial y}\right)\mathcal{T}_{1}^{C}\left(M\right)\left(t,x,y\right)\cdot\mathbb{I}_{\begin{cases}
y-ct\sin\theta<a_{2}\\
x\sin\theta-y\cos\theta>a_{1}\sin\theta-a_{2}\cos\theta
\end{cases}}\left(t,x,y\right)\label{eq:oloooso-1}
\end{multline}
where 
\begin{multline}
\dfrac{\partial\mathcal{T}_{1}^{A}\left(M\right)}{\partial t}\left(t,x,y\right)=Q\left(M\right)\left(t,x,y\right)+{\displaystyle \int_{0}^{t}}\left[\right.-c\cos\theta\dfrac{\partial Q\left(M\right)}{\partial x}\\
\left(\right.s,x+c\left(s-t\right)\cos\theta,y+c\left(s-t\right)\sin\theta\left.\right)-c\sin\theta\dfrac{\partial Q\left(M\right)}{\partial y}\\
\left(\right.s,x+c\left(s-t\right)\cos\theta,y+c\left(s-t\right)\sin\theta\left.\right)\left.\right]ds\\
-c\cos\theta\dfrac{\partial N_{1}^{0}}{\partial x}\left(x-ct\cos\theta,y-ct\sin\theta\right)-c\sin\theta\dfrac{\partial N_{1}^{0}}{\partial y}\left(x-ct\cos\theta,y-ct\sin\theta\right)
\end{multline}
\begin{multline}
\dfrac{\partial\mathcal{T}_{1}^{B}\left(M\right)}{\partial t}\left(t,x,y\right)={\textstyle {\displaystyle \int_{0}^{\frac{1}{c\cos\theta}x-\frac{a_{1}}{c\cos\theta}}}}{\textstyle \dfrac{\partial Q\left(M\right)}{\partial t}}\left(\right.s+t-{\textstyle \frac{1}{c\cos\theta}}x+{\textstyle \frac{a_{1}}{c\cos\theta}},\\
sc\cos\theta+a_{1},sc\sin\theta-{\textstyle \frac{\sin\theta}{\cos\theta}}x+y+{\textstyle \frac{\sin\theta}{\cos\theta}}a_{1}\left.\right)ds+\\
{\textstyle \dfrac{\partial N_{1}^{-}}{\partial t}\left(t-\frac{1}{c\cos\theta}x+\frac{a_{1}}{c\cos\theta},-\frac{\sin\theta}{\cos\theta}x+y+\frac{\sin\theta}{\cos\theta}a_{1}\right)}
\end{multline}
\begin{multline}
\dfrac{\partial\mathcal{T}_{1}^{C}\left(M\right)}{\partial t}\left(t,x,y\right)={\textstyle {\displaystyle \int_{0}^{\frac{1}{c\sin\theta}y-\frac{a_{2}}{c\sin\theta}}}}{\textstyle \dfrac{\partial Q\left(M\right)}{\partial t}}\left(\right.s+t-{\textstyle \frac{1}{c\sin\theta}}y+{\textstyle \frac{a_{2}}{c\sin\theta}},\\
sc\cos\theta+x-{\textstyle \frac{\cos\theta}{\sin\theta}}y+{\textstyle \frac{\cos\theta}{\sin\theta}}a_{2},sc\sin\theta+a_{2}\left.\right)ds+\\
{\textstyle \dfrac{\partial N_{1}^{--}}{\partial t}\left(t-\frac{1}{c\sin\theta}y+\frac{a_{2}}{c\sin\theta},x-\frac{\cos\theta}{\sin\theta}y+\frac{\cos\theta}{\sin\theta}a_{2}\right)}\label{eq:looaoolzo-1}
\end{multline}
\begin{multline}
\dfrac{\partial\mathcal{T}_{1}^{A}\left(M\right)}{\partial x}\left(t,x,y\right)=\\
{\displaystyle \int_{0}^{t}}\left[\right.\dfrac{\partial Q\left(M\right)}{\partial x}\left(\right.s,x+c\left(s-t\right)\cos\theta,y+c\left(s-t\right)\sin\theta\left.\right)ds+\\
\dfrac{\partial N_{1}^{0}}{\partial x}\left(x-ct\cos\theta,y-ct\sin\theta\right)
\end{multline}
\begin{multline}
\dfrac{\partial\mathcal{T}_{1}^{B}\left(M\right)}{\partial x}\left(t,x,y\right)={\textstyle \frac{1}{c\cos\theta}}Q\left(M\right)\left(t,x,y\right)+\\
{\textstyle {\displaystyle \int_{0}^{\frac{1}{c\cos\theta}x-\frac{a_{1}}{c\cos\theta}}}}\left(\right.-{\textstyle \frac{1}{c\cos\theta}}{\textstyle \dfrac{\partial Q\left(M\right)}{\partial t}}\left(\right.s+t-{\textstyle \frac{1}{c\cos\theta}}x+{\textstyle \frac{a_{1}}{c\cos\theta}},\\
sc\cos\theta+a_{1},sc\sin\theta-{\textstyle \frac{\sin\theta}{\cos\theta}}x+y+{\textstyle \frac{\sin\theta}{\cos\theta}}a_{1}\left.\right)\\
-{\textstyle \frac{\sin\theta}{\cos\theta}}{\textstyle \dfrac{\partial Q\left(M\right)}{\partial y}}\left(\right.s+t-{\textstyle \frac{1}{c\cos\theta}}x+{\textstyle \frac{a_{1}}{c\cos\theta}},sc\cos\theta+a_{1},\\
sc\sin\theta-{\textstyle \frac{\sin\theta}{\cos\theta}}x+y+{\textstyle \frac{\sin\theta}{\cos\theta}}a_{1}\left.\right)\left.\right)ds+\\
-\frac{1}{c\cos\theta}{\textstyle \dfrac{\partial N_{1}^{-}}{\partial t}\left(t-\frac{1}{c\cos\theta}x+\frac{a_{1}}{c\cos\theta},-\frac{\sin\theta}{\cos\theta}x+y+\frac{\sin\theta}{\cos\theta}a_{1}\right)}\\
-{\textstyle {\displaystyle \frac{\sin\theta}{\cos\theta}}}\dfrac{\partial N_{1}^{-}}{\partial t}\left(t-\frac{1}{c\cos\theta}x+\frac{a_{1}}{c\cos\theta},-\frac{\sin\theta}{\cos\theta}x+y+\frac{\sin\theta}{\cos\theta}a_{1}\right)
\end{multline}
\begin{multline}
\dfrac{\partial\mathcal{T}_{1}^{C}\left(M\right)}{\partial x}\left(t,x,y\right)={\textstyle {\displaystyle \int_{0}^{\frac{1}{c\sin\theta}y-\frac{a_{2}}{c\sin\theta}}}}{\textstyle \dfrac{\partial Q\left(M\right)}{\partial y}}\left(\right.s+t-{\textstyle \frac{1}{c\sin\theta}}y+{\textstyle \frac{a_{2}}{c\sin\theta}},\\
sc\cos\theta+x-{\textstyle \frac{\cos\theta}{\sin\theta}}y+{\textstyle \frac{\cos\theta}{\sin\theta}}a_{2},sc\sin\theta+a_{2}\left.\right)ds+\\
{\textstyle \dfrac{\partial N_{1}^{--}}{\partial x}\left(t-\frac{1}{c\sin\theta}y+\frac{a_{2}}{c\sin\theta},x-\frac{\cos\theta}{\sin\theta}y+\frac{\cos\theta}{\sin\theta}a_{2}\right)}\label{eq:looaoolzo-1-1}
\end{multline}
\begin{multline}
\dfrac{\partial\mathcal{T}_{1}^{A}\left(M\right)}{\partial y}\left(t,x,y\right)=\\
{\displaystyle \int_{0}^{t}}\left[\right.\dfrac{\partial Q\left(M\right)}{\partial y}\left(\right.s,x+c\left(s-t\right)\cos\theta,y+c\left(s-t\right)\sin\theta\left.\right)ds+\\
\dfrac{\partial N_{1}^{0}}{\partial y}\left(x-ct\cos\theta,y-ct\sin\theta\right)
\end{multline}
\begin{multline}
\dfrac{\partial\mathcal{T}_{1}^{B}\left(M\right)}{\partial y}\left(t,x,y\right)=\\
{\textstyle {\displaystyle \int_{0}^{\frac{1}{c\cos\theta}x-\frac{a_{1}}{c\cos\theta}}}}{\textstyle \dfrac{\partial Q\left(M\right)}{\partial y}}\left(\right.s+t-{\textstyle \frac{1}{c\cos\theta}}x+{\textstyle \frac{a_{1}}{c\cos\theta}},\\
sc\cos\theta+a_{1},sc\sin\theta-{\textstyle \frac{\sin\theta}{\cos\theta}}x+y+{\textstyle \frac{\sin\theta}{\cos\theta}}a_{1}\left.\right)ds+\\
{\textstyle \dfrac{\partial N_{1}^{-}}{\partial y}\left(t-\frac{1}{c\cos\theta}x+\frac{a_{1}}{c\cos\theta},-\frac{\sin\theta}{\cos\theta}x+y+\frac{\sin\theta}{\cos\theta}a_{1}\right)}
\end{multline}
\begin{multline}
\dfrac{\partial\mathcal{T}_{1}^{C}\left(M\right)}{\partial y}\left(t,x,y\right)=\\
{\textstyle \frac{1}{c\sin\theta}}Q\left(M\right)\left(t,x,y\right)+{\textstyle {\displaystyle \int_{0}^{\frac{1}{c\sin\theta}y-\frac{a_{1}}{c\sin\theta}}}}\left(\right.-{\textstyle \frac{1}{c\sin\theta}}{\textstyle \dfrac{\partial Q\left(M\right)}{\partial t}}\left(\right.s+t-{\textstyle \frac{1}{c\sin\theta}}y+{\textstyle \frac{a_{2}}{c\sin\theta}},\\
sc\cos\theta+x-{\textstyle \frac{\cos\theta}{\sin\theta}}y+{\textstyle \frac{\cos\theta}{\sin\theta}}a_{2},sc\sin\theta+a_{2}\left.\right)\\
-{\textstyle \frac{\cos\theta}{\sin\theta}}{\textstyle \dfrac{\partial Q\left(M\right)}{\partial y}}\left(\right.s+t-{\textstyle \frac{1}{c\sin\theta}}y+{\textstyle \frac{a_{2}}{c\sin\theta}},sc\cos\theta+x-{\textstyle \frac{\cos\theta}{\sin\theta}}y+{\textstyle \frac{\cos\theta}{\sin\theta}}a_{2},sc\sin\theta+a_{2}\left.\right)\left.\right)ds+\\
-\frac{1}{c\sin\theta}{\textstyle \dfrac{\partial N_{1}^{--}}{\partial t}\left(t-\frac{1}{c\sin\theta}y+\frac{a_{2}}{c\sin\theta},x-\frac{\cos\theta}{\sin\theta}y+\frac{\cos\theta}{\sin\theta}a_{2}\right)}\\
-{\textstyle {\displaystyle \frac{\cos\theta}{\sin\theta}}}\dfrac{\partial N_{1}^{--}}{\partial x}\left(t-\frac{1}{c\sin\theta}y+\frac{a_{2}}{c\sin\theta},x-\frac{\cos\theta}{\sin\theta}y+\frac{\cos\theta}{\sin\theta}a_{2}\right)
\end{multline}
{*}{*}{*}{*} $\dfrac{\partial\mathcal{T}_{2}\left(M\right)}{\partial t},\dfrac{\partial\mathcal{T}_{2}\left(M\right)}{\partial x},\dfrac{\partial\mathcal{T}_{2}\left(M\right)}{\partial y}$
are defined in $\mathring{\mathscr{P}}$ except perhaps on the planes
given respectively by the equations $x+ct\sin\theta=b_{1};y-ct\cos\theta=a_{2};x\cos\theta+y\sin\theta=b_{1}\cos\theta+a_{2}\sin\theta$
and
\begin{multline}
\left(\dfrac{\partial}{\partial t},\dfrac{\partial}{\partial x},\dfrac{\partial}{\partial y}\right)\mathcal{T}_{2}\left(M\right)\left(t,x,y\right)=\\
\left(\dfrac{\partial}{\partial t},\dfrac{\partial}{\partial x},\dfrac{\partial}{\partial y}\right)\mathcal{T}_{2}^{A}\left(M\right)\left(t,x,y\right)\cdot\mathbb{I}_{\begin{cases}
x+ct\sin\theta<b_{1}\\
y-ct\cos\theta>a_{2}
\end{cases}}\left(t,x,y\right)+\\
\left(\dfrac{\partial}{\partial t},\dfrac{\partial}{\partial x},\dfrac{\partial}{\partial y}\right)\mathcal{T}_{2}^{B}\left(M\right)\left(t,x,y\right)\cdot\mathbb{I}_{\begin{cases}
x+ct\sin\theta>b_{1}\\
x\cos\theta+y\sin\theta>b_{1}\cos\theta+a_{2}\sin\theta
\end{cases}}\left(t,x,y\right)+\\
\left(\dfrac{\partial}{\partial t},\dfrac{\partial}{\partial x},\dfrac{\partial}{\partial y}\right)\mathcal{T}_{2}^{C}\left(M\right)\left(t,x,y\right)\cdot\mathbb{I}_{\begin{cases}
y-ct\cos\theta<a_{2}\\
x\cos\theta+y\sin\theta<b_{1}\cos\theta+a_{2}\sin\theta
\end{cases}}\left(t,x,y\right)\label{eq:ppmms-2-1-1-1-1}
\end{multline}
where 
\begin{multline}
\dfrac{\partial\mathcal{T}_{2}^{A}\left(M\right)}{\partial t}\left(t,x,y\right)=-Q\left(M\right)\left(t,x,y\right)+{\displaystyle \int_{0}^{t}}\left[\right.-c\sin\theta\dfrac{\partial Q\left(M\right)}{\partial x}\\
\left(\right.s,x-c\left(s-t\right)\sin\theta,y+c\left(s-t\right)\cos\theta\left.\right)+c\cos\theta\dfrac{\partial Q\left(M\right)}{\partial y}\\
\left(\right.s,x-c\left(s-t\right)\sin\theta,y+c\left(s-t\right)\cos\theta\left.\right)\left.\right]ds\\
+c\sin\theta\dfrac{\partial N_{2}^{0}}{\partial x}\left(x+ct\sin\theta,y-ct\cos\theta\right)-c\cos\theta\dfrac{\partial N_{2}^{0}}{\partial y}\left(x+ct\sin\theta,y-ct\cos\theta\right)
\end{multline}
\begin{multline}
\dfrac{\partial\mathcal{T}_{2}^{B}\left(M\right)}{\partial t}\left(t,x,y\right)={\textstyle {\displaystyle \int_{0}^{-\frac{1}{c\sin\theta}x+\frac{b_{1}}{c\sin\theta}}}}{\textstyle -\dfrac{\partial Q\left(M\right)}{\partial t}}\left(\right.s+t+{\displaystyle \frac{1}{c\sin\theta}}x-\frac{b_{1}}{c\sin\theta},\\
-sc\sin\theta+b_{1},sc\cos\theta+\frac{\cos\theta}{\sin\theta}x+y-\frac{\cos\theta}{\sin\theta}b_{1}\left.\right)ds+\\
{\textstyle \dfrac{\partial N_{2}^{+}}{\partial t}\left(t+\frac{1}{c\sin\theta}x-\frac{b_{1}}{c\sin\theta},\frac{\cos\theta}{\sin\theta}x+y-\frac{\cos\theta}{\sin\theta}b_{1}\right)}
\end{multline}
\begin{multline}
\dfrac{\partial\mathcal{T}_{2}^{C}\left(M\right)}{\partial t}\left(t,x,y\right)={\textstyle {\displaystyle \int_{0}^{\frac{1}{c\cos\theta}y-\frac{a_{2}}{c\cos\theta}}}}-{\textstyle \dfrac{\partial Q\left(M\right)}{\partial t}}\left(\right.s+t-\frac{1}{c\cos\theta}y+\frac{a_{2}}{c\cos\theta},\\
-sc\sin\theta+x+\frac{\sin\theta}{\cos\theta}y-\frac{\sin\theta}{\cos\theta}a_{2},sc\cos\theta+a_{2}\left.\right)ds+\\
{\textstyle \dfrac{\partial N_{2}^{--}}{\partial t}\left(t-\frac{1}{c\cos\theta}y+\frac{a_{2}}{c\cos\theta},x+\frac{\sin\theta}{\cos\theta}y-\frac{\sin\theta}{\cos\theta}a_{2}\right)}\label{eq:looaoolzo-1-2}
\end{multline}
\begin{multline}
\dfrac{\partial\mathcal{T}_{2}^{A}\left(M\right)}{\partial x}\left(t,x,y\right)=\\
{\displaystyle \int_{0}^{t}}\left[\right.-\dfrac{\partial Q\left(M\right)}{\partial x}\left(\right.s,x-c\left(s-t\right)\sin\theta,y+c\left(s-t\right)\cos\theta\left.\right)ds+\\
\dfrac{\partial N_{2}^{0}}{\partial x}\left(x+ct\sin\theta,y-ct\cos\theta\right)
\end{multline}
\begin{multline}
\dfrac{\partial\mathcal{T}_{2}^{B}\left(M\right)}{\partial x}\left(t,x,y\right)={\textstyle \frac{1}{c\sin\theta}}Q\left(M\right)\left(t,x,y\right)+\\
{\textstyle {\displaystyle \int_{0}^{-\frac{1}{c\sin\theta}x+\frac{b_{1}}{c\sin\theta}}}}\left(\right.-{\textstyle \frac{1}{c\sin\theta}}{\textstyle \dfrac{\partial Q\left(M\right)}{\partial t}}\left(\right.s+t+{\displaystyle \frac{1}{c\sin\theta}}x-\frac{b_{1}}{c\sin\theta},\\
-sc\sin\theta+b_{1},sc\cos\theta+\frac{\cos\theta}{\sin\theta}x+y-\frac{\cos\theta}{\sin\theta}b_{1}\left.\right)\\
-{\textstyle \frac{\cos\theta}{\sin\theta}}{\textstyle \dfrac{\partial Q\left(M\right)}{\partial y}}\left(\right.s+t+{\displaystyle \frac{1}{c\sin\theta}}x-\frac{b_{1}}{c\sin\theta},-sc\sin\theta+b_{1},\\
sc\cos\theta+\frac{\cos\theta}{\sin\theta}x+y-\frac{\cos\theta}{\sin\theta}b_{1}\left.\right)\left.\right)ds+\\
+\frac{1}{c\sin\theta}{\textstyle \dfrac{\partial N_{2}^{+}}{\partial t}\left(t+\frac{1}{c\sin\theta}x-\frac{b_{1}}{c\sin\theta},\frac{\cos\theta}{\sin\theta}x+y-\frac{\cos\theta}{\sin\theta}b_{1}\right)}\\
+{\textstyle {\displaystyle \frac{\cos\theta}{\sin\theta}}}\dfrac{\partial N_{2}^{+}}{\partial t}\left(t+\frac{1}{c\sin\theta}x-\frac{b_{1}}{c\sin\theta},\frac{\cos\theta}{\sin\theta}x+y-\frac{\cos\theta}{\sin\theta}b_{1}\right)
\end{multline}
\begin{multline}
\dfrac{\partial\mathcal{T}_{2}^{C}\left(M\right)}{\partial x}\left(t,x,y\right)={\textstyle {\displaystyle \int_{0}^{\frac{1}{c\cos\theta}y-\frac{a_{2}}{c\cos\theta}}}}-{\textstyle \dfrac{\partial Q\left(M\right)}{\partial x}}\left(\right.s+t-\frac{1}{c\cos\theta}y+\frac{a_{2}}{c\cos\theta},\\
-sc\sin\theta+x+\frac{\sin\theta}{\cos\theta}y-\frac{\sin\theta}{\cos\theta}a_{2},sc\cos\theta+a_{2}\left.\right)ds+\\
{\textstyle \dfrac{\partial N_{2}^{--}}{\partial x}\left(t-\frac{1}{c\cos\theta}y+\frac{a_{2}}{c\cos\theta},x+\frac{\sin\theta}{\cos\theta}y-\frac{\sin\theta}{\cos\theta}a_{2}\right)}\label{eq:looaoolzo-1-1-1}
\end{multline}
\begin{multline}
\dfrac{\partial\mathcal{T}_{2}^{A}\left(M\right)}{\partial y}\left(t,x,y\right)=\\
{\displaystyle \int_{0}^{t}}-\dfrac{\partial Q\left(M\right)}{\partial y}\left(\right.s,x-c\left(s-t\right)\sin\theta,y+c\left(s-t\right)\cos\theta\left.\right)ds+\\
\dfrac{\partial N_{2}^{0}}{\partial y}\left(x+ct\sin\theta,y-ct\cos\theta\right)
\end{multline}
\begin{multline}
\dfrac{\partial\mathcal{T}_{2}^{B}\left(M\right)}{\partial y}\left(t,x,y\right)=\\
{\textstyle {\displaystyle \int_{0}^{-\frac{1}{c\sin\theta}x+\frac{b_{1}}{c\sin\theta}}}}-{\textstyle \dfrac{\partial Q\left(M\right)}{\partial y}}\left(\right.s+t+{\displaystyle \frac{1}{c\sin\theta}}x-\frac{b_{1}}{c\sin\theta},\\
-sc\sin\theta+b_{1},sc\cos\theta+\frac{\cos\theta}{\sin\theta}x+y-\frac{\cos\theta}{\sin\theta}b_{1}\left.\right)ds+\\
{\textstyle \dfrac{\partial N_{2}^{+}}{\partial y}\left(t+{\displaystyle \frac{1}{c\sin\theta}}x-\frac{b_{1}}{c\sin\theta},\frac{\cos\theta}{\sin\theta}x+y-\frac{\cos\theta}{\sin\theta}b_{1}\right)}
\end{multline}
\begin{multline}
\dfrac{\partial\mathcal{T}_{2}^{C}\left(M\right)}{\partial y}\left(t,x,y\right)=\\
-{\textstyle \frac{1}{c\cos\theta}}Q\left(M\right)\left(t,x,y\right)+{\textstyle {\displaystyle \int_{0}^{\frac{1}{c\cos\theta}y-\frac{a_{2}}{c\cos\theta}}}}\left(\right.{\textstyle \frac{1}{c\cos\theta}}{\textstyle \dfrac{\partial Q\left(M\right)}{\partial t}}\left(\right.s+t-\frac{1}{c\cos\theta}y+\frac{a_{2}}{c\cos\theta},\\
-sc\sin\theta+x+\frac{\sin\theta}{\cos\theta}y-\frac{\sin\theta}{\cos\theta}a_{2},sc\cos\theta+a_{2}\left.\right)\\
-{\textstyle \frac{\sin\theta}{\cos\theta}}{\textstyle \dfrac{\partial Q\left(M\right)}{\partial x}}\left(\right.s+t-\frac{1}{c\cos\theta}y+\frac{a_{2}}{c\cos\theta},-sc\sin\theta+x+\frac{\sin\theta}{\cos\theta}y-\frac{\sin\theta}{\cos\theta}a_{2},sc\cos\theta+a_{2}\left.\right)\left.\right)ds+\\
-\frac{1}{c\cos\theta}{\textstyle \dfrac{\partial N_{2}^{--}}{\partial t}\left(t-\frac{1}{c\cos\theta}y+\frac{a_{2}}{c\cos\theta},x+\frac{\sin\theta}{\cos\theta}y-\frac{\sin\theta}{\cos\theta}a_{2}\right)}\\
+{\textstyle {\displaystyle \frac{\sin\theta}{\cos\theta}}}\dfrac{\partial N_{2}^{--}}{\partial x}\left(t-\frac{1}{c\cos\theta}y+\frac{a_{2}}{c\cos\theta},x+\frac{\sin\theta}{\cos\theta}y-\frac{\sin\theta}{\cos\theta}a_{2}\right)
\end{multline}
{*}{*}{*}{*} $\dfrac{\partial\mathcal{T}_{3}\left(M\right)}{\partial t},\dfrac{\partial\mathcal{T}_{3}\left(M\right)}{\partial x},\dfrac{\partial\mathcal{T}_{3}\left(M\right)}{\partial y}$
are defined in $\mathring{\mathscr{P}}$ except perhaps on the planes
given respectively by the equations $x-ct\sin\theta=a_{1};y+ct\cos\theta=b_{2};x\cos\theta+y\sin\theta=a_{1}\cos\theta+b_{2}\sin\theta$
and
\begin{multline}
\left(\dfrac{\partial}{\partial t},\dfrac{\partial}{\partial x},\dfrac{\partial}{\partial y}\right)\mathcal{T}_{3}\left(M\right)\left(t,x,y\right)=\\
\left(\dfrac{\partial}{\partial t},\dfrac{\partial}{\partial x},\dfrac{\partial}{\partial y}\right)\mathcal{T}_{3}^{A}\left(M\right)\left(t,x,y\right)\cdot\mathbb{I}_{\begin{cases}
x-ct\sin\theta>a_{1}\\
y+ct\cos\theta<b_{2}
\end{cases}}\left(t,x,y\right)+\\
\left(\dfrac{\partial}{\partial t},\dfrac{\partial}{\partial x},\dfrac{\partial}{\partial y}\right)\mathcal{T}_{3}^{B}\left(M\right)\left(t,x,y\right)\cdot\mathbb{I}_{\begin{cases}
x-ct\sin\theta<a_{1}\\
x\cos\theta+y\sin\theta<a_{1}\cos\theta+b_{2}\sin\theta
\end{cases}}\left(t,x,y\right)+\\
\left(\dfrac{\partial}{\partial t},\dfrac{\partial}{\partial x},\dfrac{\partial}{\partial y}\right)\mathcal{T}_{3}^{C}\left(M\right)\left(t,x,y\right)\cdot\mathbb{I}_{\begin{cases}
y+ct\cos\theta>b_{2}\\
x\cos\theta+y\sin\theta>a_{1}\cos\theta+b_{2}\sin\theta
\end{cases}}\left(t,x,y\right)\label{eq:ppmms-2-1-1-1}
\end{multline}
where 
\begin{multline}
\dfrac{\partial\mathcal{T}_{3}^{A}\left(M\right)}{\partial t}\left(t,x,y\right)=-Q\left(M\right)\left(t,x,y\right)+{\displaystyle \int_{0}^{t}}\left[\right.c\sin\theta\dfrac{\partial Q\left(M\right)}{\partial x}\\
\left(\right.s,x+c\left(s-t\right)\sin\theta,y-c\left(s-t\right)\cos\theta\left.\right)-c\cos\theta\dfrac{\partial Q\left(M\right)}{\partial y}\\
\left(\right.s,x+c\left(s-t\right)\sin\theta,y-c\left(s-t\right)\cos\theta\left.\right)\left.\right]ds\\
-c\sin\theta\dfrac{\partial N_{3}^{0}}{\partial x}\left(x-ct\sin\theta,y+ct\cos\theta\right)+c\cos\theta\dfrac{\partial N_{3}^{0}}{\partial y}\left(x-ct\sin\theta,y+ct\cos\theta\right)
\end{multline}
\begin{multline}
\dfrac{\partial\mathcal{T}_{3}^{B}\left(M\right)}{\partial t}\left(t,x,y\right)={\textstyle {\displaystyle \int_{0}^{\frac{1}{c\sin\theta}x-\frac{a_{1}}{c\sin\theta}}}}{\textstyle -\dfrac{\partial Q\left(M\right)}{\partial t}}\left(\right.s+t-\frac{1}{c\sin\theta}x+\frac{a_{1}}{c\sin\theta},\\
sc\sin\theta+a_{1},-sc\cos\theta+\frac{\cos\theta}{\sin\theta}x+y-\frac{\cos\theta}{\sin\theta}a_{1}\left.\right)ds+\\
{\textstyle \dfrac{\partial N_{3}^{-}}{\partial t}\left(t-\frac{1}{c\sin\theta}x+\frac{a_{1}}{c\sin\theta},\frac{\cos\theta}{\sin\theta}x+y-\frac{\cos\theta}{\sin\theta}a_{1}\right)}
\end{multline}
\begin{multline}
\dfrac{\partial\mathcal{T}_{3}^{C}\left(M\right)}{\partial t}\left(t,x,y\right)={\textstyle {\displaystyle \int_{0}^{-\frac{1}{c\cos\theta}y+\frac{b_{2}}{c\cos\theta}}}}-{\textstyle \dfrac{\partial Q\left(M\right)}{\partial t}}\left(\right.s+t+\frac{1}{c\cos\theta}y-\frac{b_{2}}{c\cos\theta},\\
sc\sin\theta+x+\frac{\sin\theta}{\cos\theta}y-\frac{\sin\theta}{\cos\theta}b_{2},-sc\cos\theta+b_{2}\left.\right)ds+\\
{\textstyle \dfrac{\partial N_{3}^{++}}{\partial t}\left(t+\frac{1}{c\cos\theta}y-\frac{b_{2}}{c\cos\theta},x+\frac{\sin\theta}{\cos\theta}y-\frac{\sin\theta}{\cos\theta}b_{2}\right)}\label{eq:looaoolzo-1-2-1}
\end{multline}
\begin{multline}
\dfrac{\partial\mathcal{T}_{3}^{A}\left(M\right)}{\partial x}\left(t,x,y\right)=\\
{\displaystyle \int_{0}^{t}}\left[\right.-\dfrac{\partial Q\left(M\right)}{\partial x}\left(\right.s,x+c\left(s-t\right)\sin\theta,y-c\left(s-t\right)\cos\theta\left.\right)ds+\\
\dfrac{\partial N_{3}^{0}}{\partial x}\left(x-ct\sin\theta,y+ct\cos\theta\right)
\end{multline}
\begin{multline}
\dfrac{\partial\mathcal{T}_{3}^{B}\left(M\right)}{\partial x}\left(t,x,y\right)=-{\textstyle \frac{1}{c\sin\theta}}Q\left(M\right)\left(t,x,y\right)+\\
{\textstyle {\displaystyle \int_{0}^{\frac{1}{c\sin\theta}x-\frac{a_{1}}{c\sin\theta}}}}\left(\right.{\textstyle \frac{1}{c\sin\theta}}{\textstyle \dfrac{\partial Q\left(M\right)}{\partial t}}\left(\right.s+t-\frac{1}{c\sin\theta}x+\frac{a_{1}}{c\sin\theta},\\
sc\sin\theta+a_{1},-sc\cos\theta+\frac{\cos\theta}{\sin\theta}x+y-\frac{\cos\theta}{\sin\theta}a_{1}\left.\right)\\
-{\textstyle \frac{\cos\theta}{\sin\theta}}{\textstyle \dfrac{\partial Q\left(M\right)}{\partial y}}\left(\right.s+t-\frac{1}{c\sin\theta}x+\frac{a_{1}}{c\sin\theta},sc\sin\theta+a_{1},\\
-sc\cos\theta+\frac{\cos\theta}{\sin\theta}x+y-\frac{\cos\theta}{\sin\theta}a_{1}\left.\right)\left.\right)ds+\\
-\frac{1}{c\sin\theta}{\textstyle \dfrac{\partial N_{3}^{-}}{\partial t}\left(t-\frac{1}{c\sin\theta}x+\frac{a_{1}}{c\sin\theta},\frac{\cos\theta}{\sin\theta}x+y-\frac{\cos\theta}{\sin\theta}a_{1}\right)}\\
+{\textstyle {\displaystyle \frac{\cos\theta}{\sin\theta}}}\dfrac{\partial N_{3}^{-}}{\partial t}\left(t-\frac{1}{c\sin\theta}x+\frac{a_{1}}{c\sin\theta},\frac{\cos\theta}{\sin\theta}x+y-\frac{\cos\theta}{\sin\theta}a_{1}\right)
\end{multline}
\begin{multline}
\dfrac{\partial\mathcal{T}_{3}^{C}\left(M\right)}{\partial x}\left(t,x,y\right)={\textstyle {\displaystyle \int_{0}^{-\frac{1}{c\cos\theta}y+\frac{b_{2}}{c\cos\theta}}}}-{\textstyle \dfrac{\partial Q\left(M\right)}{\partial x}}\left(\right.s+t+\frac{1}{c\cos\theta}y-\frac{b_{2}}{c\cos\theta},\\
sc\sin\theta+x+\frac{\sin\theta}{\cos\theta}y-\frac{\sin\theta}{\cos\theta}b_{2},-sc\cos\theta+b_{2}\left.\right)ds+\\
{\textstyle \dfrac{\partial N_{3}^{++}}{\partial x}\left(t+\frac{1}{c\cos\theta}y-\frac{b_{2}}{c\cos\theta},x+\frac{\sin\theta}{\cos\theta}y-\frac{\sin\theta}{\cos\theta}b_{2}\right)}\label{eq:looaoolzo-1-1-1-1}
\end{multline}
\begin{multline}
\dfrac{\partial\mathcal{T}_{3}^{A}\left(M\right)}{\partial y}\left(t,x,y\right)=\\
{\displaystyle \int_{0}^{t}}\left[\right.-\dfrac{\partial Q\left(M\right)}{\partial y}\left(\right.s,x+c\left(s-t\right)\sin\theta,y-c\left(s-t\right)\cos\theta\left.\right)ds+\\
\dfrac{\partial N_{3}^{0}}{\partial y}\left(x-ct\sin\theta,y+ct\cos\theta\right)
\end{multline}
\begin{multline}
\dfrac{\partial\mathcal{T}_{3}^{B}\left(M\right)}{\partial y}\left(t,x,y\right)=\\
{\textstyle {\displaystyle \int_{0}^{\frac{1}{c\sin\theta}x-\frac{a_{1}}{c\sin\theta}}}}-{\textstyle \dfrac{\partial Q\left(M\right)}{\partial y}}\left(\right.s+t-\frac{1}{c\sin\theta}x+\frac{a_{1}}{c\sin\theta},sc\sin\theta+a_{1},\\
-sc\cos\theta+\frac{\cos\theta}{\sin\theta}x+y-\frac{\cos\theta}{\sin\theta}a_{1}\left.\right)ds+\\
+{\textstyle \dfrac{\partial N_{3}^{-}}{\partial y}\left(t-\frac{1}{c\sin\theta}x+\frac{a_{1}}{c\sin\theta},\frac{\cos\theta}{\sin\theta}x+y-\frac{\cos\theta}{\sin\theta}a_{1}\right)}
\end{multline}
\begin{multline}
\dfrac{\partial\mathcal{T}_{3}^{C}\left(M\right)}{\partial y}\left(t,x,y\right)={\textstyle \frac{1}{c\cos\theta}}Q\left(M\right)\left(t,x,y\right)+\\
{\textstyle {\displaystyle \int_{0}^{-\frac{1}{c\cos\theta}y+\frac{b_{2}}{c\cos\theta}}}}\left(\right.-{\textstyle \frac{1}{c\cos\theta}}{\textstyle \dfrac{\partial Q\left(M\right)}{\partial t}}\left(\right.s+t+\frac{1}{c\cos\theta}y-\frac{b_{2}}{c\cos\theta},\\
sc\sin\theta+x+\frac{\sin\theta}{\cos\theta}y-\frac{\sin\theta}{\cos\theta}b_{2},-sc\cos\theta+b_{2}\left.\right)\\
-{\textstyle \frac{\sin\theta}{\cos\theta}}{\textstyle \dfrac{\partial Q\left(M\right)}{\partial x}}\left(\right.s+t+\frac{1}{c\cos\theta}y-\frac{b_{2}}{c\cos\theta},sc\sin\theta+x+\frac{\sin\theta}{\cos\theta}y-\frac{\sin\theta}{\cos\theta}b_{2},\\
-sc\cos\theta+b_{2}\left.\right)\left.\right)ds+\frac{1}{c\cos\theta}{\textstyle \dfrac{\partial N_{3}^{++}}{\partial t}\left(t+\frac{1}{c\cos\theta}y-\frac{b_{2}}{c\cos\theta},x+\frac{\sin\theta}{\cos\theta}y-\frac{\sin\theta}{\cos\theta}b_{2}\right)}\\
+{\textstyle {\displaystyle \frac{\sin\theta}{\cos\theta}}}\dfrac{\partial N_{3}^{++}}{\partial x}\left(t+\frac{1}{c\cos\theta}y-\frac{b_{2}}{c\cos\theta},x+\frac{\sin\theta}{\cos\theta}y-\frac{\sin\theta}{\cos\theta}b_{2}\right)
\end{multline}

{*}{*}{*}{*} $\dfrac{\partial\mathcal{T}_{4}\left(M\right)}{\partial t},\dfrac{\partial\mathcal{T}_{4}\left(M\right)}{\partial x},\dfrac{\partial\mathcal{T}_{4}\left(M\right)}{\partial y}$
are defined in $\mathring{\mathscr{P}}$ except perhaps on the planes
given respectively by the equations $x+ct\cos\theta=b_{1};y+ct\sin\theta=b_{2};x\sin\theta-y\cos\theta=b_{1}\sin\theta-b_{2}\cos\theta$
and
\begin{multline}
\left(\dfrac{\partial}{\partial t},\dfrac{\partial}{\partial x},\dfrac{\partial}{\partial y}\right)\mathcal{T}_{4}\left(M\right)\left(t,x,y\right)=\\
\left(\dfrac{\partial}{\partial t},\dfrac{\partial}{\partial x},\dfrac{\partial}{\partial y}\right)\mathcal{T}_{4}^{A}\left(M\right)\left(t,x,y\right)\cdot\mathbb{I}_{\begin{cases}
x+ct\cos\theta<b_{1}\\
y+ct\sin\theta<b_{2}
\end{cases}}\left(t,x,y\right)+\\
\left(\dfrac{\partial}{\partial t},\dfrac{\partial}{\partial x},\dfrac{\partial}{\partial y}\right)\mathcal{T}_{4}^{B}\left(M\right)\left(t,x,y\right)\cdot\\
\mathbb{I}_{\begin{cases}
x+ct\cos\theta>b_{1}\\
x\sin\theta-y\cos\theta>b_{1}\sin\theta-b_{2}\cos\theta
\end{cases}}\left(t,x,y\right)+\\
\left(\dfrac{\partial}{\partial t},\dfrac{\partial}{\partial x},\dfrac{\partial}{\partial y}\right)\mathcal{T}_{4}^{C}\left(M\right)\left(t,x,y\right)\cdot\mathbb{I}_{\begin{cases}
y+ct\sin\theta>b_{2}\\
x\sin\theta-y\cos\theta<b_{1}\sin\theta-b_{2}\cos\theta
\end{cases}}\left(t,x,y\right)\label{eq:oloooso-1-1}
\end{multline}
where 
\begin{multline}
\dfrac{\partial\mathcal{T}_{4}^{A}\left(M\right)}{\partial t}\left(t,x,y\right)=Q\left(M\right)\left(t,x,y\right)+{\displaystyle \int_{0}^{t}}\left[\right.c\cos\theta\dfrac{\partial Q\left(M\right)}{\partial x}\\
\left(\right.s,x-c\left(s-t\right)\cos\theta,y-c\left(s-t\right)\sin\theta\left.\right)+c\sin\theta\dfrac{\partial Q\left(M\right)}{\partial y}\\
\left(\right.s,x-c\left(s-t\right)\cos\theta,y-c\left(s-t\right)\sin\theta\left.\right)\left.\right]ds\\
+c\cos\theta\dfrac{\partial N_{4}^{0}}{\partial x}\left(x+ct\cos\theta,y+ct\sin\theta\right)+c\sin\theta\dfrac{\partial N_{4}^{0}}{\partial y}\left(x+ct\cos\theta,y+ct\sin\theta\right)
\end{multline}
\begin{multline}
\dfrac{\partial\mathcal{T}_{4}^{B}\left(M\right)}{\partial t}\left(t,x,y\right)={\textstyle {\displaystyle \int_{0}^{-\frac{1}{c\cos\theta}x+\frac{b_{1}}{c\cos\theta}}}}{\textstyle \dfrac{\partial Q\left(M\right)}{\partial t}}\left(\right.s+t+\frac{1}{c\cos\theta}x-\frac{b_{1}}{c\cos\theta},\\
-sc\cos\theta+b_{1},-sc\sin\theta-\frac{\sin\theta}{\cos\theta}x+y+\frac{\sin\theta}{\cos\theta}b_{1}\left.\right)ds+\\
{\textstyle \dfrac{\partial N_{4}^{+}}{\partial t}\left(t+\frac{1}{c\cos\theta}x-\frac{b_{1}}{c\cos\theta},-\frac{\sin\theta}{\cos\theta}x+y+\frac{\sin\theta}{\cos\theta}b_{1}\right)}
\end{multline}
\begin{multline}
\dfrac{\partial\mathcal{T}_{4}^{C}\left(M\right)}{\partial t}\left(t,x,y\right)={\textstyle {\displaystyle \int_{0}^{-\frac{1}{c\sin\theta}y+\frac{b_{2}}{c\sin\theta}}}}{\textstyle \dfrac{\partial Q\left(M\right)}{\partial t}}\left(\right.s+t+\frac{1}{c\sin\theta}y-\frac{b_{2}}{c\sin\theta},\\
-sc\cos\theta+x-\frac{\cos\theta}{\sin\theta}y+\frac{\cos\theta}{\sin\theta}b_{2},-sc\sin\theta+b_{2}\left.\right)ds+\\
{\textstyle \dfrac{\partial N_{4}^{++}}{\partial t}\left(t+\frac{1}{c\sin\theta}y-\frac{b_{2}}{c\sin\theta},x-\frac{\cos\theta}{\sin\theta}y+\frac{\cos\theta}{\sin\theta}b_{2}\right)}\label{eq:looaoolzo-1-3}
\end{multline}
\begin{multline}
\dfrac{\partial\mathcal{T}_{4}^{A}\left(M\right)}{\partial x}\left(t,x,y\right)=\\
{\displaystyle \int_{0}^{t}}\left[\right.\dfrac{\partial Q\left(M\right)}{\partial x}\left(\right.s,x-c\left(s-t\right)\cos\theta,y-c\left(s-t\right)\sin\theta\left.\right)ds+\\
\dfrac{\partial N_{4}^{0}}{\partial x}\left(x+ct\cos\theta,y+ct\sin\theta\right)
\end{multline}
\begin{multline}
\dfrac{\partial\mathcal{T}_{4}^{B}\left(M\right)}{\partial x}\left(t,x,y\right)=-{\textstyle \frac{1}{c\cos\theta}}Q\left(M\right)\left(t,x,y\right)+\\
{\textstyle {\displaystyle \int_{0}^{-\frac{1}{c\cos\theta}x+\frac{b_{1}}{c\cos\theta}}}}\left(\right.{\textstyle \frac{1}{c\cos\theta}}{\textstyle \dfrac{\partial Q\left(M\right)}{\partial t}}\left(\right.s+t+\frac{1}{c\cos\theta}x-\frac{b_{1}}{c\cos\theta},\\
-sc\cos\theta+b_{1},-sc\sin\theta-\frac{\sin\theta}{\cos\theta}x+y+\frac{\sin\theta}{\cos\theta}b_{1}\left.\right)\\
-{\textstyle \frac{\sin\theta}{\cos\theta}}{\textstyle \dfrac{\partial Q\left(M\right)}{\partial y}}\left(\right.s+t+\frac{1}{c\cos\theta}x-\frac{b_{1}}{c\cos\theta},-sc\cos\theta+b_{1},\\
-sc\sin\theta-\frac{\sin\theta}{\cos\theta}x+y+\frac{\sin\theta}{\cos\theta}b_{1}\left.\right)\left.\right)ds+\\
\frac{1}{c\cos\theta}{\textstyle \dfrac{\partial N_{4}^{+}}{\partial t}\left(t+\frac{1}{c\cos\theta}x-\frac{b_{1}}{c\cos\theta},-\frac{\sin\theta}{\cos\theta}x+y+\frac{\sin\theta}{\cos\theta}b_{1}\right)}\\
-{\textstyle {\displaystyle \frac{\sin\theta}{\cos\theta}}}\dfrac{\partial N_{4}^{+}}{\partial y}\left(t+\frac{1}{c\cos\theta}x-\frac{b_{1}}{c\cos\theta},-\frac{\sin\theta}{\cos\theta}x+y+\frac{\sin\theta}{\cos\theta}b_{1}\right)
\end{multline}
\begin{multline}
\dfrac{\partial\mathcal{T}_{4}^{C}\left(M\right)}{\partial x}\left(t,x,y\right)={\textstyle {\displaystyle \int_{0}^{-\frac{1}{c\sin\theta}y+\frac{b_{2}}{c\sin\theta}}}}{\textstyle \dfrac{\partial Q\left(M\right)}{\partial x}}\left(\right.s+t+\frac{1}{c\sin\theta}y-\frac{b_{2}}{c\sin\theta},\\
-sc\cos\theta+x-\frac{\cos\theta}{\sin\theta}y+\frac{\cos\theta}{\sin\theta}b_{2},-sc\sin\theta+b_{2}\left.\right)ds+\\
{\textstyle \dfrac{\partial N_{4}^{++}}{\partial x}\left(t+\frac{1}{c\sin\theta}y-\frac{b_{2}}{c\sin\theta},x-\frac{\cos\theta}{\sin\theta}y+\frac{\cos\theta}{\sin\theta}b_{2}\right)}\label{eq:looaoolzo-1-1-2}
\end{multline}
\begin{multline}
\dfrac{\partial\mathcal{T}_{4}^{A}\left(M\right)}{\partial y}\left(t,x,y\right)=\\
{\displaystyle \int_{0}^{t}}\left[\right.\dfrac{\partial Q\left(M\right)}{\partial y}\left(\right.s,x-c\left(s-t\right)\cos\theta,y-c\left(s-t\right)\sin\theta\left.\right)ds+\\
\dfrac{\partial N_{4}^{0}}{\partial y}\left(x+ct\cos\theta,y+ct\sin\theta\right)
\end{multline}
\begin{multline}
\dfrac{\partial\mathcal{T}_{4}^{B}\left(M\right)}{\partial y}\left(t,x,y\right)=\\
{\textstyle {\displaystyle \int_{0}^{-\frac{1}{c\cos\theta}x+\frac{b_{1}}{c\cos\theta}}}}{\textstyle \dfrac{\partial Q\left(M\right)}{\partial y}}\left(\right.s+t+\frac{1}{c\cos\theta}x-\frac{b_{1}}{c\cos\theta},\\
-sc\cos\theta+b_{1},-sc\sin\theta-\frac{\sin\theta}{\cos\theta}x+y+\frac{\sin\theta}{\cos\theta}b_{1}\left.\right)ds+\\
{\textstyle \dfrac{\partial N_{4}^{+}}{\partial y}\left(t+\frac{1}{c\cos\theta}x-\frac{b_{1}}{c\cos\theta},-\frac{\sin\theta}{\cos\theta}x+y+\frac{\sin\theta}{\cos\theta}b_{1}\right)}
\end{multline}
\begin{multline}
\dfrac{\partial\mathcal{T}_{4}^{C}\left(M\right)}{\partial y}\left(t,x,y\right)=\\
-{\textstyle \frac{1}{c\sin\theta}}Q\left(M\right)\left(t,x,y\right)+{\textstyle {\displaystyle \int_{0}^{-\frac{1}{c\sin\theta}y+\frac{b_{2}}{c\sin\theta}}}}\left(\right.{\textstyle \frac{1}{c\sin\theta}}{\textstyle \dfrac{\partial Q\left(M\right)}{\partial t}}\left(\right.s+t+\frac{1}{c\sin\theta}y-\frac{b_{2}}{c\sin\theta},\\
-sc\cos\theta+x-\frac{\cos\theta}{\sin\theta}y+\frac{\cos\theta}{\sin\theta}b_{2},-sc\sin\theta+b_{2}\left.\right)\\
-{\textstyle \frac{\cos\theta}{\sin\theta}}{\textstyle \dfrac{\partial Q\left(M\right)}{\partial x}}\left(\right.s+t+\frac{1}{c\sin\theta}y-\frac{b_{2}}{c\sin\theta},-sc\cos\theta+x-\frac{\cos\theta}{\sin\theta}y+\frac{\cos\theta}{\sin\theta}b_{2},\\
-sc\sin\theta+b_{2}\left.\right)\left.\right)ds+\\
+\frac{1}{c\sin\theta}{\textstyle \dfrac{\partial N_{4}^{++}}{\partial t}\left(t+\frac{1}{c\sin\theta}y-\frac{b_{2}}{c\sin\theta},x-\frac{\cos\theta}{\sin\theta}y+\frac{\cos\theta}{\sin\theta}b_{2}\right)}\\
-{\textstyle {\displaystyle \frac{\cos\theta}{\sin\theta}}}\dfrac{\partial N_{4}^{++}}{\partial x}\left(t+\frac{1}{c\sin\theta}y-\frac{b_{2}}{c\sin\theta},x-\frac{\cos\theta}{\sin\theta}y+\frac{\cos\theta}{\sin\theta}b_{2}\right)\label{eq:ikikie}
\end{multline}

\end{document}